\newtheorem{theorem}{Theorem}[section]
\newtheorem{lemma}[theorem]{Lemma}
\newtheorem{proposition}[theorem]{Proposition}
\theoremstyle{definition}
\newtheorem{definition}[theorem]{Definition}
\newtheorem{remark}[theorem]{Remark}
\newcommand{\dd}{{\rm d}}
\newcommand{\N}{{\mathbb N}}
\newcommand{\R}{{\mathbb R}}
\newcommand{\ab}{[a,b]}
\numberwithin{equation}{section}
\begin{document}

\title{Extremal solutions of systems of measure differential equations and applications in the study of Stieltjes differential problems}
\author{
Rodrigo L\'opez Pouso\thanks{Department of Statistics, Mathematical Analysis and Optimization, University of Santiago de Compostela, Santiago de Compostela, Spain (email: {\tt rodrigo.lopez@usc.es})}, \,
Ignacio M\'arquez Alb\'es\thanks{Department of Statistics, Mathematical Analysis and Optimization, University of Santiago de Compostela, Santiago de Compostela, Spain (email: {\tt ignacio.marquez@rai.usc.es})}, \,
        Giselle A. Monteiro\thanks{Institute of Mathematics, Czech Academy of Sciences, Prague, Czech Republic 
        (email: {\tt gam@math.cas.cz})} \thanks{Mathematical Institute, Slovak Academy of Sciences, Bratislava,
         Slovakia}        }

\date{\today}
\maketitle

\begin{abstract}
We use lower and upper solutions to investigate the existence of the greatest and the least solutions for quasimonotone systems of measure differential equations. The established results are then used to study the solvability of Stieltjes differential equations; a recent unification of discrete, continuous and impulsive systems. The applicability of our results is illustrated in a simple model for bacteria population.

\smallskip
\noindent\textbf{2010 Mathematical Subject Classification:} 34A12, 34A34, 34A36, 45G15, 26A24 

\smallskip
\noindent\textbf{Keywords:} measure differential equations, extremal solutions, lower solution, upper solution, Stieltjes derivatives
\end{abstract}

\section{Introduction}

The method of lower and upper solutions traces as far back as 1886, with Peano's work \cite{Peano}. Despite that, the existence of extremal solutions and their relation to lower and upper solutions continue to be the subject matter of many research papers on ordinary differential equations; for instance \cite{caotpo, Cid, lihela, Pouso4}. In recent years, special attention has been devoted to the question of solutions for discontinuous nonlinear differential equations; see e.g. \cite{Biles-Pouso, Biles-Schechter, HR, Pouso6}. In this regard, a few steps have been done towards the development of the corresponding theory of extremal solutions for measure differential equations \cite{as}. 

Measure differential equations, as introduced in \cite{fed}, are integral equations featuring the Kurzweil-Stieltjes integral. These equations are known to generalize other types of equations, such as classical differential equations, equations with impulses, or dynamic equations on time scales; see \cite{fed2, MS}. 

In this paper we are concerned with vectorial measure differential equations of the form
\begin{equation}\label{MDEint}
 \vec{y}(t) = \vec{y_0} + \int_{t_0}^{t} \vec{f}(s,\vec{y}(s))\,{\rm d}\vec g(s), \ \ \ t \in I,
\end{equation}
where $I=[t_0,t_0+L]$, $\vec{y_0}\in \R^n$, $\vec{f}:I\times \R^n\to\mathbb R^n$, $\vec g:I\to\mathbb R^n$
is nondecreasing and left-continuous. The integral on the right-hand side is to be understood as an integration process `component-by-component' in the Kurzweil--Stieltjes sense (see Section 3 for details). Accordingly, the equality \eqref{MDEint} corresponds to a system of measure differential equations. 

The objective of this paper is to establish the existence of the greatest and the least solutions for \eqref{MDEint} in the presence of a pair of well-ordered lower and upper solutions. For our purposes,  quasimonotonicity plays a key role. The main contribution of this paper lies on the fact that the nonlinear term $\vec f$ is assumed to satisfy weakened continuity hypotheses. Besides answering one of the questions posed in \cite{as}, our results provide an insight into the solvability of equations with functional arguments.

A secondary, but interesting, issue is the impact of our results in the theory of Stieltjes differential equations (also known as $g$--differential equations), \cite{fp}. As we will see, the integral equation (\ref{MDEint}) includes as a particular case $g$--differential systems like
\begin{equation}\label{Stieltint}
\vec x'_g(t)=\vec f(t,\vec x(t))\quad\mbox{for } g\mbox{-a.a. }t\in I,\quad \vec x(t_0)=\vec{x_0},
\end{equation}
where $\vec x'_g$ stands for the derivative with respect to a nondecreasing and left-continuous function $g$ (see \cite{pr} for definition and properties of the derivative). These equations have gain in popularity as they offer an unified approach for investigating discrete and continuous problems. Impulsive differential systems, for example, can be rewritten as (\ref{Stieltint}) by using an appropriate derivator $g$ with jump discontinuities at the times when impulses are prescribed. Benefiting from the relation between \eqref{MDEint} and \eqref{Stieltint}, we establish new theorems on extremal solutions for Stieltjes differential systems which somehow complement the study initiated in \cite{PM}. As an illustration of our results, we analyze the dynamics of a bacteria population using $g$--differential systems.

\medbreak

\section{Preliminaries}

In what follows we summarize some useful results concerning regulated functions. For details see e.g. \cite{F,H}.

Recall that a function is called regulated if the one sided-limits exist at all points of the domain. It is well-known that regulated functions are bounded and have at most countably many points of discontinuity. Given a regulated function $f:\ab\to\R^n$, we define
$f(a-)=f(a)$, $f(b+)=f(b)$ and
\[\Delta^+f(t)=f(t+)-f(t),\quad\quad \Delta^-f(t)=f(t)-f(t-),\quad t\in[a,b],\]
where $f(t+)$ and $f(t-)$ stand for the right-hand limit and the left-hand limit, respectively.

As usual, $G([a,b],\mathbb R^n)$ denotes the space of all regulated functions $f:[a,b]\to\mathbb R^n$, and it is a Banach space  when equipped with the supremum norm $\|f\|_\infty=\sup_{t\in[a,b]}\|f(t)\|$. In the case when $n=1$, we will write simply $G(\ab)$.

\begin{theorem} \label{regulated}
The following two statements are equivalent:
\begin{itemize}
  \item[{\rm\,(i)}] $f\in G(\ab).$
  \item[{\rm(ii)}] For every $\varepsilon\,{>}\,0$ there exists a division $D\,:\,a=\alpha_0<\alpha_1<\cdots<\alpha_{\nu(D)}=b$ 
such that for every $j\in\{1,\dots,\nu(D)\}$ and $s,t\in(\alpha_{j{-}1},\alpha_j)$, we have 
$|f(s)\,{-}\,f(t)|<\varepsilon$.
\end{itemize}
\end{theorem}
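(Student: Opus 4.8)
The plan is to prove the two implications separately; both rely on the completeness of $\R^n$, and the implication (i)$\Rightarrow$(ii) uses in addition the compactness of $\ab$.

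For (ii)$\Rightarrow$(i), I would fix $t_0\in(a,b]$ and show that $f(t_0-)$ exists; the existence of right-hand limits is entirely symmetric, and the one-sided limits at $a$ and $b$ hold by definition. Given $\varepsilon>0$, choose a division $D$ as in (ii). The point $t_0$ lies in a unique closed subinterval of $D$, and in either case ($t_0=\alpha_j$ for some $j\ge1$, or $t_0\in(\alpha_{j-1},\alpha_j)$) there is $\delta>0$ such that $(t_0-\delta,t_0)$ is contained in a single open subinterval of $D$; concretely $\delta=\alpha_j-\alpha_{j-1}$ in the first case and $\delta=t_0-\alpha_{j-1}$ in the second. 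Then $|f(s)-f(t)|<\varepsilon$ whenever $s,t\in(t_0-\delta,t_0)$, so $f$ satisfies the Cauchy condition as $s\to t_0-$, and completeness of $\R^n$ yields the limit.

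For (i)$\Rightarrow$(ii), fix $\varepsilon>0$. Since $f(t-)$ and $f(t+)$ exist for every $t\in\ab$, there is $\delta_t>0$ with $|f(s)-f(t-)|<\varepsilon/2$ for $s\in(t-\delta_t,t)$ and $|f(s)-f(t+)|<\varepsilon/2$ for $s\in(t,t+\delta_t)$; hence the oscillation of $f$ on each of the half-intervals $(t-\delta_t,t)$ and $(t,t+\delta_t)$ is less than $\varepsilon$. The intervals $(t-\delta_t,t+\delta_t)$, $t\in\ab$, form an open cover of the compact set $\ab$, so finitely many of them, associated with points $t_1,\dots,t_m$, already cover $\ab$. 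I would then take as $D$ the division obtained by arranging in increasing order the points $a$, $b$, $t_1,\dots,t_m$, together with those among $t_1\pm\delta_{t_1},\dots,t_m\pm\delta_{t_m}$ that belong to $\ab$.

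The remaining, and most delicate, point is to verify that every open subinterval $(\alpha_{j-1},\alpha_j)$ of $D$ is contained in one of the half-intervals above. To do so I would take the midpoint $c$ of $(\alpha_{j-1},\alpha_j)$, pick $i$ with $c\in(t_i-\delta_{t_i},t_i+\delta_{t_i})$, and note $c\neq t_i$ since $t_i$ is a point of $D$. If $c<t_i$, then $t_i$ and $t_i-\delta_{t_i}$ (or $a$, when $t_i-\delta_{t_i}<a$) are points of $D$ lying outside the open interval $(\alpha_{j-1},\alpha_j)$ and on opposite sides of $c$; this forces $t_i-\delta_{t_i}\le\alpha_{j-1}$ and $\alpha_j\le t_i$, so $(\alpha_{j-1},\alpha_j)\subseteq(t_i-\delta_{t_i},t_i)$, on which $f$ oscillates by less than $\varepsilon$. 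The case $c>t_i$ is symmetric, using $(t_i,t_i+\delta_{t_i})$. The main obstacle is exactly this combinatorial bookkeeping: the division must contain all the $t_i$ together with all the admissible endpoints $t_i\pm\delta_{t_i}$, so that no subinterval can straddle one of these points, which is precisely what makes the midpoint localization conclusive.
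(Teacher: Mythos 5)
Your proof is correct. Note that the paper itself offers no proof of this statement: it is quoted as a known characterization of regulated functions with a pointer to the literature (Fraňková, Hönig), so there is no argument of the authors to compare against; your write-up supplies the standard one. The direction (ii)$\Rightarrow$(i) via the Cauchy criterion and completeness is fine (the phrase ``lies in a unique closed subinterval'' is inaccurate when $t_0$ is an interior division point, but your two cases $t_0=\alpha_j$ and $t_0\in(\alpha_{j-1},\alpha_j)$ are exhaustive and the choice of $\delta$ works in both, so this is only a slip of wording). The direction (i)$\Rightarrow$(ii) is also sound: the oscillation bound $<\varepsilon$ on each half-interval follows from the triangle inequality with $f(t\pm)$, the finite subcover exists by compactness, and your midpoint argument does close the combinatorial step correctly --- since every $t_i$ and every admissible $t_i\pm\delta_{t_i}$ is a division point, none of them can lie strictly inside $(\alpha_{j-1},\alpha_j)$, which forces $t_i-\delta_{t_i}\le\alpha_{j-1}$ and $\alpha_j\le t_i$ (or the symmetric inequalities), hence the inclusion of the subinterval in a single half-interval where the oscillation is small. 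The only degenerate cases ($t_i\pm\delta_{t_i}$ falling outside $[a,b]$) are handled exactly as you indicate, by replacing the missing endpoint with $a$ or $b$.
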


\begin{remark}\label{notation}
  Given a function $f\in G(\ab)$ and an arbitrary $\varepsilon>0$, we will denote by $D_{f,\varepsilon}$ the division whose existence is guaranteed by Theorem \ref{regulated}.
\end{remark}

Compactness in the space of regulated functions is connected with the following notion:

\begin{definition}
A set $\mathcal{A}\subset G([a,b])$ is said to be equiregulated if for every $\varepsilon>0$ and every $t_0\in [a,b]$ there exists $\delta>0$ such that:
\[
  |x(t)-x(t_0+)|<\varepsilon
  \quad\mbox{for all \ }  t_0<t<t_0+\delta,\quad x\in \mathcal{A},
\]
\[
  |x(t)-x(t_0-)|<\varepsilon
  \quad\mbox{for all \ }  t_0-\delta<t<t_0,\quad x\in \mathcal{A}.
\]
\end{definition}

In the lines of Theorem \ref{regulated}, we have the following characterization of equiregulated sets of functions.

\begin{lemma}\label{Lem}
The following statements are equivalent:
\begin{itemize}
  \item[{\rm\,(i)}] $\mathcal{A}{\subset}\,G(\ab)$ is equiregulated.
  \item[{\rm(ii)}] For every $\varepsilon\,{>}\,0$ there exists a division $D\,:\,a=\alpha_0<\alpha_1<\cdots<\alpha_{\nu(D)}=b$ 
  such that for every $f\in\mathcal{A},$ $j\in\{1,\dots,\nu(D)\}$ and
  $s,t\in(\alpha_{j{-}1},\alpha_j),$ we have $|f(s)\,{-}\,f(t)|<\varepsilon.$
\end{itemize}
\end{lemma}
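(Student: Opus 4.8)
The plan is to establish the two implications separately. The direction $(ii)\Rightarrow(i)$ is essentially immediate, so the substance of the proof lies in $(i)\Rightarrow(ii)$, which is a compactness argument on the interval $[a,b]$, carried out uniformly over the family $\mathcal{A}$ in the spirit of the proof of Theorem \ref{regulated}.

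For $(ii)\Rightarrow(i)$, I would fix $\varepsilon>0$ and take the division $D:a=\alpha_0<\cdots<\alpha_{\nu(D)}=b$ provided by (ii) for the value $\varepsilon/2$. Given $t_0\in[a,b]$, choose $\delta>0$ small enough that $(t_0,t_0+\delta)$ lies inside a single subinterval $(\alpha_{j-1},\alpha_j)$ and, when $t_0>a$, that $(t_0-\delta,t_0)$ lies inside a single subinterval as well. Then for any $f\in\mathcal{A}$ and $t_0<t<t_0+\delta$, every $s\in(t_0,t_0+\delta)$ satisfies $|f(t)-f(s)|<\varepsilon/2$, and letting $s\to t_0+$ gives $|f(t)-f(t_0+)|\le\varepsilon/2<\varepsilon$; the estimate near $t_0-$ is analogous. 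Hence $\mathcal{A}$ is equiregulated.

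For $(i)\Rightarrow(ii)$, fix $\varepsilon>0$. For each $t_0\in[a,b]$, equiregulatedness (applied with $\varepsilon/2$) yields $\delta(t_0)>0$ such that every $f\in\mathcal{A}$ satisfies $|f(s)-f(t)|<\varepsilon$ whenever $s,t$ both lie in $(t_0-\delta(t_0),t_0)$ or both lie in $(t_0,t_0+\delta(t_0))$ — indeed $|f(s)-f(t)|\le|f(s)-f(t_0+)|+|f(t_0+)-f(t)|<\varepsilon$ on the right-hand interval, and similarly on the left. The open intervals $(t_0-\delta(t_0),t_0+\delta(t_0))$ cover $[a,b]$, so finitely many of them, say those centred at $t_1,\dots,t_m$, already cover $[a,b]$. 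I would then let $D$ be the division whose points are $a$, $b$, $t_1,\dots,t_m$, together with those of $t_i-\delta(t_i)$ and $t_i+\delta(t_i)$ that lie in $(a,b)$, all arranged in increasing order. The point is that any subinterval $(\alpha_{j-1},\alpha_j)$ of $D$ is contained in one of the controlled half-intervals: picking an interior point $c$ of $(\alpha_{j-1},\alpha_j)$, there is $i$ with $c\in(t_i-\delta(t_i),t_i+\delta(t_i))$, and since $t_i$ — and, when they fall in $(a,b)$, the endpoints $t_i\pm\delta(t_i)$ — are division points, $c$ lies strictly on one side of $t_i$, which forces $(\alpha_{j-1},\alpha_j)\subseteq(t_i-\delta(t_i),t_i)$ or $(\alpha_{j-1},\alpha_j)\subseteq(t_i,t_i+\delta(t_i))$. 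Consequently $|f(s)-f(t)|<\varepsilon$ for every $f\in\mathcal{A}$ and $s,t\in(\alpha_{j-1},\alpha_j)$, which is exactly (ii).

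The only delicate step is this last verification: the combinatorial bookkeeping ensuring that each subinterval of the constructed division sits inside a single half-interval on which the oscillation of all members of $\mathcal{A}$ is uniformly below $\varepsilon$. The reason one must split each covering interval $(t_i-\delta(t_i),t_i+\delta(t_i))$ at its centre, rather than use it as a whole, is that the one-sided limits $f(t_i-)$ and $f(t_i+)$ may disagree, so no bound is available across $t_i$; inserting $t_i$ and the points $t_i\pm\delta(t_i)$ into $D$ is precisely what removes this obstruction. Everything else is routine.
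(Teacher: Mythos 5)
Your proof is correct, and both implications are carried out soundly: the delicate bookkeeping in $(i)\Rightarrow(ii)$ — inserting each centre $t_i$ and the endpoints $t_i\pm\delta(t_i)$ into the division so that every open subinterval lands inside a single controlled half-interval — is exactly what is needed, and your limiting argument $s\to t_0\pm$ in $(ii)\Rightarrow(i)$ correctly yields the non-strict bound $\varepsilon/2<\varepsilon$ uniformly over $\mathcal{A}$. The paper itself gives no proof of this lemma (it is quoted from the literature on regulated functions, cf.\ the references given at the start of Section 2), so there is nothing to compare against; your argument is the standard compactness proof, parallel to that of Theorem \ref{regulated}, and is complete as written.
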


Next theorem is the analogue of the Arzel\'a-Ascoli theorem in the space of regulated
functions.

\begin{theorem}\label{Fra}
A subset $\mathcal{A}$ of $G(\ab)$ is relatively compact  if and only if it is equiregulated and the set $\{f(t):f\in \mathcal{A} \}$ is bounded for each $t\in [a,b]$.
\end{theorem}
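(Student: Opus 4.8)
The plan is to prove the two implications separately, using Lemma \ref{Lem} as the main tool. For the sufficiency direction, assume $\mathcal{A}$ is equiregulated and pointwise bounded. I would first upgrade pointwise boundedness to uniform boundedness: applying Lemma \ref{Lem} with $\varepsilon=1$ gives a division $D:a=\alpha_0<\cdots<\alpha_m=b$ on whose open subintervals every $f\in\mathcal{A}$ oscillates by less than $1$; picking one interior point $\gamma_j\in(\alpha_{j-1},\alpha_j)$ for each $j$ and using the pointwise bounds at the finitely many points $\alpha_0,\dots,\alpha_m,\gamma_1,\dots,\gamma_m$ yields a constant $M$ with $\|f\|_\infty\le M$ for all $f\in\mathcal{A}$. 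Relative compactness would then be obtained by a diagonal argument: for each $k\in\N$ let $D_k$ be the division furnished by Lemma \ref{Lem} for $\varepsilon=1/k$, and let $S\subset\ab$ be the countable set consisting of all division points of all the $D_k$ together with one interior point chosen from each open subinterval of each $D_k$. Given a sequence $(f_n)$ in $\mathcal{A}$, uniform boundedness and the Bolzano--Weierstrass theorem, applied successively along an enumeration of $S$ and diagonalized, produce a subsequence $(f_{n_j})$ converging at every point of $S$.

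Next I would show that $(f_{n_j})$ is uniformly Cauchy, hence convergent in the Banach space $G(\ab)$. Fix $\varepsilon>0$ and choose $k$ with $3/k<\varepsilon$; for the division $D_k$, each open subinterval carries a sample point $\beta\in S$ at which $(f_{n_j})$ converges. For $t$ in that subinterval,
\[
 |f_{n_j}(t)-f_{n_l}(t)|\le |f_{n_j}(t)-f_{n_j}(\beta)| + |f_{n_j}(\beta)-f_{n_l}(\beta)| + |f_{n_l}(\beta)-f_{n_l}(t)| < \tfrac1k + |f_{n_j}(\beta)-f_{n_l}(\beta)| + \tfrac1k,
\]
the outer terms being controlled by Lemma \ref{Lem}. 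Since $D_k$ has only finitely many subintervals and finitely many division points, all lying in $S$, a single threshold $N$ makes the middle term $<1/k$ for all of them simultaneously when $j,l\ge N$; at the division points the estimate is even simpler. Hence $\|f_{n_j}-f_{n_l}\|_\infty\le 3/k<\varepsilon$ for $j,l\ge N$, so $(f_{n_j})$ converges, and $\mathcal{A}$ is relatively compact.

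For the necessity direction, assume $\mathcal{A}$ is relatively compact. Then $\overline{\mathcal{A}}$ is compact, so $\mathcal{A}$ is totally bounded and, in particular, bounded in the supremum norm, which at once gives the pointwise boundedness of $\{f(t):f\in\mathcal{A}\}$. For equiregulation, fix $\varepsilon>0$: total boundedness provides $g_1,\dots,g_p\in\mathcal{A}$ whose $\varepsilon/3$-balls cover $\mathcal{A}$. Taking a common refinement of the divisions supplied by Theorem \ref{regulated} for each $g_i$, the finite family $\{g_1,\dots,g_p\}$ satisfies condition (ii) of Lemma \ref{Lem}, hence is equiregulated; so for each $t_0\in\ab$ there is $\delta>0$ with $|g_i(t)-g_i(t_0+)|<\varepsilon/3$ for $t\in(t_0,t_0+\delta)$ and $|g_i(t)-g_i(t_0-)|<\varepsilon/3$ for $t\in(t_0-\delta,t_0)$, for every $i$. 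Given $x\in\mathcal{A}$, pick $i$ with $\|x-g_i\|_\infty<\varepsilon/3$; since $|x(t_0\pm)-g_i(t_0\pm)|\le\|x-g_i\|_\infty$, a three-term triangle inequality gives $|x(t)-x(t_0+)|<\varepsilon$ on $(t_0,t_0+\delta)$ and the analogous bound on the left, proving $\mathcal{A}$ equiregulated.

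The step I expect to be the most delicate is the handling of the \emph{interiors} of the subintervals in the sufficiency direction: Lemma \ref{Lem} only bounds oscillation strictly inside each $(\alpha_{j-1},\alpha_j)$, so the countable set over which the convergent subsequence is extracted must be enriched with interior sample points (equivalently, one could argue through the one-sided limits $f(\alpha_j\pm)$), and one must verify that the finite bookkeeping over the subintervals of $D_k$ genuinely yields a single uniform Cauchy threshold. The remaining ingredients---Bolzano--Weierstrass, diagonalization, completeness of $G(\ab)$, equiregulation of finite families, and the triangle inequality---are routine.
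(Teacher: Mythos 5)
Your proof is correct, but there is nothing in the paper to compare it with: Theorem \ref{Fra} is stated in the preliminaries as a known result, with the reader referred to the literature on regulated functions (the references cited at the start of Section 2), and no proof is given there. What you supply is essentially the classical argument for this compactness criterion, and it goes through as written. In the sufficiency direction, the upgrade from pointwise to uniform boundedness via one application of Lemma \ref{Lem}, the diagonal extraction over the countable set $S$ of division points and interior sample points of the divisions $D_k$, and the three-term estimate giving the uniform Cauchy property are all sound; the one step you leave implicit is the passage from ``every sequence in $\mathcal{A}$ has a subsequence converging in $G(\ab)$'' to relative compactness, which is routine since $G(\ab)$ is complete and the limits lie in $\overline{\mathcal{A}}$. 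In the necessity direction, your use of total boundedness plus equiregulation of the finite net $\{g_1,\dots,g_p\}$ is fine; the small point worth making explicit is that a common refinement of the divisions $D_{g_i,\varepsilon}$ furnished by Theorem \ref{regulated} still satisfies the oscillation bound for each $g_i$, because every open subinterval of the refinement is contained in an open subinterval of the original division, and that the bounds $|x(t_0\pm)-g_i(t_0\pm)|\le\|x-g_i\|_\infty$ are obtained by passing to the one-sided limits in the pointwise inequality. With those two remarks spelled out, the argument is a complete and self-contained proof of the theorem.
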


\begin{remark}\label{Fra1}
At this point is worth mentioning one particular case in which the assumptions ensuring compactness are satisfied. Let $\mathcal{A}\subset G(\ab)$ be such that there exist $M>0$ and nondecreasing function $h:[a,b] \to \mathbb R$ satisfying 
$$ | f(v) - f(u) | \leq h(v) - h(u)\quad \mbox{for \ }f \in \mathcal{A},\,[u,v]\subseteq[a,b],$$
and $|f(a)|\leq M$ for all $f \in \mathcal{A}$. By Lemma \ref{Lem} the set $\mathcal{A}$ is equiregulated, and 
obviously $\{f(t):f\in \mathcal{A} \}$ is bounded for each $t\in I$. Thus, in this case, $\mathcal{A}$ is relatively compact.
\end{remark}

The following result is concerned with pointwise supremum of regulated functions and, to the best of our knowledge, it is not available in the literature.

\begin{proposition}\label{sup}
Let $\mathcal{A}$ be a relatively compact subset of $G(\ab)$. Then, the function $\xi:\ab\to\R$ given by
\[
\xi(t)=\sup\{f(t):f\in \mathcal{A}\},\quad t\in\ab,
\]
is regulated.
\end{proposition}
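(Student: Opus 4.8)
The plan is to use the equiregulatedness of $\mathcal{A}$, which follows from Theorem~\ref{Fra} since $\mathcal{A}$ is relatively compact, together with the uniform boundedness of pointwise values. The key observation is that the $\varepsilon/3$-type division $D$ provided by Lemma~\ref{Lem}(ii) works simultaneously for all $f\in\mathcal{A}$, so it should work for the supremum $\xi$ as well; I then intend to verify condition (ii) of Theorem~\ref{regulated} for $\xi$ with a slightly coarser bound (say $\varepsilon$ in place of $\varepsilon/3$) using the same division $D$.

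First I would fix $\varepsilon>0$ and let $D: a=\alpha_0<\alpha_1<\cdots<\alpha_{\nu(D)}=b$ be the division from Lemma~\ref{Lem}(ii) corresponding to $\varepsilon/3$, so that $|f(s)-f(t)|<\varepsilon/3$ for all $f\in\mathcal{A}$, all $j$, and all $s,t\in(\alpha_{j-1},\alpha_j)$. Note $\xi$ is well-defined and real-valued because, by Theorem~\ref{Fra}, $\{f(t):f\in\mathcal{A}\}$ is bounded for each $t$. Now fix an index $j$ and two points $s,t\in(\alpha_{j-1},\alpha_j)$. To compare $\xi(s)$ and $\xi(t)$, pick $f\in\mathcal{A}$ with $f(s)>\xi(s)-\varepsilon/3$; then $\xi(t)\geq f(t)>f(s)-\varepsilon/3>\xi(s)-2\varepsilon/3$, hence $\xi(s)-\xi(t)<2\varepsilon/3<\varepsilon$. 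By symmetry $\xi(t)-\xi(s)<\varepsilon$, so $|\xi(s)-\xi(t)|<\varepsilon$. This is exactly condition (ii) of Theorem~\ref{regulated} for $\xi$, which gives $\xi\in G(\ab)$.

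The only subtlety — and the one place where relative compactness is genuinely needed rather than mere pointwise boundedness — is the passage through the equiregulatedness characterization: pointwise boundedness alone would give a well-defined $\xi$ but not the uniform oscillation control on each subinterval. I expect no real obstacle here, since Lemma~\ref{Lem} packages the compactness hypothesis into precisely the form needed, and the supremum argument above is the standard ``sup of a uniformly equicontinuous-type family'' estimate adapted to the regulated setting. One could alternatively argue directly at the level of one-sided limits: fixing $t_0$, the $\delta$ from the definition of equiregulated serves for $\xi$ by the same $\varepsilon/3$ comparison, showing $\xi(t_0+)$ and $\xi(t_0-)$ exist. I would present the division-based version since it is cleaner and matches the flavour of Theorem~\ref{regulated}.
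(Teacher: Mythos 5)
Your proof is correct and follows essentially the same route as the paper: well-definedness of $\xi$ from Theorem \ref{Fra}, then the division from Lemma \ref{Lem} and a standard supremum comparison to verify condition (ii) of Theorem \ref{regulated}. The only cosmetic difference is that you pick a near-maximizer at $s$ (with an $\varepsilon/3$ budget) while the paper bounds $f(t)<\xi(s)+\varepsilon$ uniformly over $f\in\mathcal{A}$ and then takes the supremum; both arguments are equivalent.
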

\begin{proof}
  From Theorem \ref{Fra}, we know that $\{f(t):f\in \mathcal{A} \}$ is bounded for each $t\in\ab$; therefore, the function $\xi$ is well defined.
Moreover, since $\mathcal{A}$ is equiregulated, by Lemma \ref{Lem}, given $\varepsilon>0$ there exists
a division $D\,:\,a=\alpha_0<\alpha_1<\cdots<\alpha_{\nu(D)}=b$   
such that for $j\in\{1,\dots,\nu(D)\}$ we have
\[
|f(t)-f(s)|<\varepsilon\quad\mbox{for all \ }s,t\in(\alpha_{j{-}1},\alpha_j),\quad f\in\mathcal{A}.
\]
Fixed an arbitrary $j\in\{1,\dots,\nu(D)\}$, for $s,t\in(\alpha_{j{-}1},\alpha_j)$ we get
\[
f(s)-\varepsilon< f(t)<f(s)+\varepsilon\leq \xi(s)+\varepsilon\quad\mbox{for all \ } f\in\mathcal{A},
\]
and consequently
\[
\xi(s)-\varepsilon\leq\xi(t)\le \xi(s)+\varepsilon.
\]
In summary, $\xi$ satisfies assumption (ii) of Theorem \ref{regulated}, hence $\xi$ is regulated.
\end{proof}

In what follows, given functions $f:[a,b]\to \mathbb R^n$ and $g:[a,b]\to\mathbb R$, the Kurzweil-Stieltjes integral of $f$ with respect to $g$ on $[a,b]$ will be denoted by $\int_a^bf(s)\,\dd g(s)$, or simply $\int_a^bf\,\dd g$. Such an integral has the usual properties of linearity, additivity with respect to adjacent subintervals, as well as the properties to be presented next. The interested reader we refer
to \cite{STV} or \cite{Tv5}.

The following result is known as Hake property.

\begin{theorem}\label{Hake}
Let $f:[a,b]\to \mathbb R^n$ and $g:[a,b]\to\mathbb R$ be given.
\begin{enumerate}
	\item If
the integral $\int_t^bf\,\dd g$ exists for every $t\in (a,b]$, and $A\in\R^n$ is such that
$$\lim_{t\to a+}\left(\int_t^bf\,\dd g+f(a)(g(t)-g(a))\right)=A,$$
then $\int_a^bf\,\dd g$ exists and equals $A$.
	\item If
	the integral $\int_a^tf\,\dd g$ exists for every $t\in [a,b)$, and $A\in\R^n$ is such that
	$$\lim_{t\to b-}\left(\int_a^tf\,\dd g+f(b)(g(b)-g(t))\right)=A,$$
then $\int_a^bf\,\dd g$ exists and equals $A$.
\end{enumerate}
\end{theorem}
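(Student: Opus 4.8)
The plan is to argue straight from the gauge (Kurzweil) definition of $\int_a^b f\,\dd g$: this integral equals $A$ exactly when, for each $\varepsilon>0$, there is a gauge $\delta\colon[a,b]\to(0,\infty)$ such that every $\delta$-fine tagged partition of $[a,b]$ yields a Riemann--Stieltjes sum within $\varepsilon$ of $A$. Since the integral is taken componentwise it suffices to treat $n=1$, and it is enough to prove the first assertion; the second follows by applying the first to the reflected data $h(s):=f(a+b-s)$, $k(s):=-g(a+b-s)$ on $[a,b]$, for which $\int_c^d h\,\dd k=\int_{a+b-d}^{a+b-c}f\,\dd g$, so that the hypothesis at $b$ for $(f,g)$ becomes the hypothesis at $a$ for $(h,k)$.

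So fix $\varepsilon>0$ and choose a strictly decreasing sequence $(a_k)_{k\ge1}$ with $a_1=b$ and $a_k\downarrow a$. By additivity over adjacent intervals together with the hypothesis, $J_k:=\int_{a_{k+1}}^{a_k}f\,\dd g$ exists for every $k$; pick a gauge $\delta_k$ on $[a_{k+1},a_k]$ whose $\delta_k$-fine partitions give sums within $\varepsilon/2^{k+2}$ of $J_k$. (The standard Saks--Henstock/Cauchy refinement of this fact is also used: such a $\delta_k$ controls, up to a comparable error, partial Riemann--Stieltjes sums over subintervals of $[a_{k+1},a_k]$.) Next, the limiting hypothesis furnishes $\eta>0$ with
\[
\Bigl|\int_t^b f\,\dd g+f(a)\bigl(g(t)-g(a)\bigr)-A\Bigr|<\varepsilon/2
\qquad\text{for all }t\in(a,a+\eta).
\]

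Now assemble a single gauge $\delta$ on $[a,b]$ by requiring: $\delta(a)<\eta$; $\delta(\tau)\le\min\{\delta_k(\tau),\,\tau-a_{k+1},\,a_k-\tau\}$ for $\tau\in(a_{k+1},a_k)$; $\delta(a_k)\le\min\{\delta_k(a_k),\,\delta_{k-1}(a_k),\,a_k-a_{k+1},\,a_{k-1}-a_k\}$ for $k\ge2$; $\delta(b)\le\min\{\delta_1(b),\,b-a_2\}$; and, in addition, $\delta(\tau)\le\tau-a$ for every $\tau>a$. The last condition forces the interval of any $\delta$-fine partition $P$ containing $a$ to be tagged at $a$, say $[a,u]$ with $a<u<a+\eta$, while the conditions attached to the breakpoints $a_k$ prevent any interval of $P$ contained in $[u,b]$ from reaching past a neighbouring $a_k$. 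Hence, after the harmless splitting (at its own tag $a_k$) of any interval whose interior contains an $a_k$, the part of $P$ lying in $[u,b]$ decomposes into a $\delta_m$-fine partition of $[u,a_m]$, where $m$ is the index with $u\in[a_{m+1},a_m]$, together with genuine $\delta_k$-fine partitions of $[a_{k+1},a_k]$ for $1\le k\le m-1$. Summing the local estimates — the Saks--Henstock bound on the block $[u,a_m]$ and the bounds $|S_k-J_k|<\varepsilon/2^{k+2}$ on the full blocks — and using $\int_u^{a_m}f\,\dd g+\sum_{k=1}^{m-1}J_k=\int_u^b f\,\dd g$, one finds that the Riemann--Stieltjes sum of $P$ over $[u,b]$ differs from $\int_u^b f\,\dd g$ by less than $\varepsilon/2$. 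Since the total sum is $S(P)=f(a)\bigl(g(u)-g(a)\bigr)+\bigl(\text{sum of }P\text{ over }[u,b]\bigr)$, adding and subtracting $\int_u^b f\,\dd g$ and invoking the displayed inequality gives $|S(P)-A|<\varepsilon$. As $\varepsilon>0$ was arbitrary, $\int_a^b f\,\dd g$ exists and equals $A$.

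I expect the technical heart to be the third paragraph: designing one gauge on $[a,b]$ that simultaneously refines every local gauge $\delta_k$, stops partition intervals from straddling the breakpoints $a_k$ (so that a $\delta$-fine partition restricts to partitions of the countably many pieces), and forces $a$ to be the tag of its interval so that $f(a)(g(u)-g(a))$ appears with exactly the weight matched by the limiting hypothesis. The one genuinely non-cosmetic ingredient there is the Saks--Henstock lemma, needed because the leftmost block $[u,a_m]$ is only a part of $[a_{m+1},a_m]$ and $u$ is not known in advance; granting it, the residual errors form a convergent geometric series and the estimate closes.
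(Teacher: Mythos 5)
Your proposal is correct, but there is nothing in the paper to compare it with: Theorem \ref{Hake} is stated as a known preliminary (the Hake property of the Kurzweil--Stieltjes integral) and the paper simply refers the reader to \cite{STV} and \cite{Tv5} (see also \cite{MST}) rather than proving it. What you have written is essentially the standard textbook proof from the gauge definition, and its ingredients all check out: the componentwise reduction and the reflection $h(s)=f(a+b-s)$, $k(s)=-g(a+b-s)$ are legitimate because Riemann--Stieltjes sums transform exactly under reflection of tagged partitions, so part 2 really does follow from part 1; the existence of the blocks $J_k=\int_{a_{k+1}}^{a_k}f\,\dd g$ follows from the hypothesis plus the standard fact that existence on $[a_{k+1},b]$ gives existence on subintervals together with additivity; the extra condition $\delta(\tau)\le\tau-a$ correctly anchors the tag of the leftmost interval at $a$, so that its contribution $f(a)(g(u)-g(a))$ is exactly the term matched by the limit hypothesis; the constraints at the breakpoints $a_k$ (together with splitting an interval at its own tag, which leaves the sum unchanged) guarantee that the rest of the partition decomposes into $\delta_k$-fine pieces of the blocks; and the Saks--Henstock lemma, which holds verbatim for Kurzweil--Stieltjes integrals with arbitrary $f$ and $g$, handles the incomplete block $[u,a_m]$, the factor $2$ it introduces being absorbed by your choice $\varepsilon/2^{k+2}$. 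The argument uses no regularity of $f$ or $g$, so it matches the generality in which the theorem is stated; as a self-contained alternative to the citation it is perfectly adequate.
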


Next theorem summarizes some properties of the indefinite Kurzweil-Stieltjes integral.

\begin{theorem}\label{indefiniteKS}
Let $f:[a,b]\to \mathbb R^n$ and $g\in G(\ab)$ be such that
the integral $\int_a^bf\,\dd g$ exists. Then the function
\[
h(t)=\int_a^tf\,\dd g,\quad t\in[a,b],
\]
is regulated and satisfies
\begin{align*}
h(t+)&=h(t)+f(t)\Delta^+g(t),\quad t\in[a,b),\\
h(t-)&=h(t)-f(t)\Delta^-g(t),\quad t\in(a,b].
\end{align*}
\end{theorem}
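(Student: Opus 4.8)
The plan is to first establish that $h$ is regulated by invoking the Hake property (Theorem \ref{Hake}), and then to compute the one-sided limits explicitly via the same result. Since the statement is vectorial and the integral is defined component-by-component, it suffices to argue for $n=1$; the general case follows by applying the scalar result to each coordinate of $f$ and $g$ (noting that $g\in G(\ab)$ is scalar already, so actually only $f$ is split).

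First I would fix $t\in[a,b)$ and show $h(t+)$ exists and equals $h(t)+f(t)\Delta^+g(t)$. For $\tau\in(t,b]$ the integral $\int_t^\tau f\,\dd g$ exists by additivity (it equals $h(\tau)-h(t)$), so by the first part of Theorem \ref{Hake} applied on the interval $[t,\tau]$ — reading it in the direction $t\to$ the left endpoint — we need to understand $\lim_{\tau\to t+}\big(\int_\tau^{b} f\,\dd g + f(t)(g(\tau)-g(t))\big)$; rearranging, since $\int_a^b f\,\dd g = \int_a^\tau f\,\dd g + \int_\tau^b f\,\dd g$ exists as a finite number, the Hake property forces $\lim_{\tau\to t+}\big(h(\tau) - f(t)(g(\tau)-g(t))\big)$ to exist and, matching constants, to equal $h(t)$. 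Hence $\lim_{\tau\to t+} h(\tau) = h(t) + f(t)\lim_{\tau\to t+}(g(\tau)-g(t)) = h(t)+f(t)\Delta^+ g(t)$, using that $g$ is regulated so the one-sided limit $g(t+)$ exists. The formula for $h(t-)$ at $t\in(a,b]$ is entirely symmetric, using the second part of Theorem \ref{Hake} and the left-hand limit $g(t-)$. Once both one-sided limits are shown to exist at every interior point (and trivially at the endpoints, where $h(a-):=h(a)$ and $h(b+):=h(b)$ by convention), $h$ is regulated by definition.

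The main subtlety — not a deep obstacle, but the point requiring care — is the precise bookkeeping when applying the Hake property: Theorem \ref{Hake} is stated as a criterion for the \emph{existence} of the integral over the full interval given existence over all proper subintervals plus convergence of a correction term, whereas here we already \emph{know} the full integral exists and want to extract the limit. The correct reading is the contrapositive-style use: since $\int_a^b f\,\dd g$ exists, and $\int_a^\tau f\,\dd g$ exists for each $\tau$, the convergence hypothesis in Theorem \ref{Hake} cannot fail in a way that contradicts this — more carefully, one applies Theorem \ref{Hake} on a \emph{fixed} subinterval $[t,\tau_0]$ with $\tau_0\in(t,b]$: the integral $\int_s^{\tau_0} f\,\dd g$ exists for $s\in(t,\tau_0]$, the integral $\int_t^{\tau_0} f\,\dd g$ exists, so the limit $\lim_{s\to t+}\big(\int_s^{\tau_0} f\,\dd g + f(t)(g(s)-g(t))\big)$ must equal $\int_t^{\tau_0} f\,\dd g$. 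Since $\int_s^{\tau_0} f\,\dd g = h(\tau_0)-h(s)$, this rearranges directly to $\lim_{s\to t+} h(s) = h(t) + f(t)\,\Delta^+ g(t)$, as claimed. I would present this computation once for $h(t+)$ in full and simply indicate that $h(t-)$ is analogous.
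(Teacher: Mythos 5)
There is a genuine gap at the heart of your argument: the existence of the one-sided limit of $h$. Theorem \ref{Hake}, as stated in the paper, gives only one implication: \emph{if} the limit $\lim_{s\to t+}\bigl(\int_s^{\tau_0}f\,\dd g+f(t)(g(s)-g(t))\bigr)$ exists and equals $A$, \emph{then} $\int_t^{\tau_0}f\,\dd g$ exists and equals $A$. From the mere existence of $\int_t^{\tau_0}f\,\dd g$ (and of the integrals over $[s,\tau_0]$) this implication tells you nothing about the existence of that limit; the only thing you can legitimately extract is that \emph{if} the limit exists, its value is forced to be $\int_t^{\tau_0}f\,\dd g$, i.e.\ uniqueness of the candidate value, not existence. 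Your ``contrapositive-style'' reading does not repair this: the converse of Theorem \ref{Hake} is precisely the statement $h(t+)=h(t)+f(t)\Delta^+g(t)$ rewritten, so invoking it amounts to assuming what is to be proved. (Note also that the paper itself offers no proof of Theorem \ref{indefiniteKS}; it is quoted from the references \cite{STV,Tv5}, so there is no internal argument you could be matching.)

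The missing ingredient is the Saks--Henstock lemma (or an equivalent direct gauge estimate), which is how the result is proved in the cited sources. Given $\varepsilon>0$, choose a gauge $\delta$ on $[a,b]$ witnessing the existence of $\int_a^b f\,\dd g$ to within $\varepsilon$; applying Saks--Henstock to the single tagged interval $([t,s],t)$ with $t<s<t+\delta(t)$ yields $\|h(s)-h(t)-f(t)(g(s)-g(t))\|\le\varepsilon$. Since $g$ is regulated, $g(s)\to g(t+)$, so this estimate shows that $h$ satisfies the Cauchy condition at $t+$; hence $h(t+)$ exists, and letting $s\to t+$ and then $\varepsilon\to 0$ gives $h(t+)=h(t)+f(t)\Delta^+g(t)$. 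The left-hand limit is symmetric, and regulatedness of $h$ then follows from the existence of both one-sided limits at every point, exactly as you organize it. Your reduction to the scalar case and the endpoint conventions are fine; it is only the key existence step that needs this replacement.
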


\section{Vectorial measure differential equations}
Measure differential equations, in the sense introduced in \cite{fed}, are integral equations of the form
\[
y(t)=y_0+\int_{t_0}^{t}{f(s,y(s)) \, \dd g(s)}, \quad t \in I,
\]
where the integral is understood as the Kurzweil--Stieltjes integral with respect to a nondecreasing function 
$g:I\to\R$ and the function $f$ takes values in $\R^n$, $n\in\N$. Herein we propose a more general version of such an equation where not only $f$ can be a vectorial function but also the integrator $g$. More precisely, we are interested on equations
\begin{equation}\label{MDE}
 \vec{y}(t) = \vec{y_0} + \int_{t_0}^{t} \vec{f}(s,\vec{y}(s))\,{\rm d}\vec g(s), \ \ \ t \in I,
\end{equation}
where $I=[t_0,t_0+L]$, $\vec{y_0}\in \R^n$, $\vec{f}:I\times \R^n\to\mathbb R^n$ and $\vec g:I\to\mathbb R^n$. The integral on the right-hand side is simply a notation for a `component-by-component' integration process in the Kurzweil--Stieltjes sense, that is, writing
$$\vec{y}_0=(y_{0,1},\dots, y_{0,n}),\quad\vec{y}=(y_{1},\dots, y_{n}),\quad\vec f=(f_1,\dots,f_n),\quad\vec g=(g_1,\dots,g_n),$$
equation \eqref{MDE} corresponds to a systems of $n$ scalar equations, each of which reads as follows
\begin{equation}\label{MDEint2}
y_i(t) = y_{0,i} + \int_{t_0}^{t} f_i(s,\vec{y}(s))\,{\rm d} g_i(s), \ \ \ t \in I, \ \ \ i \in \{1,\dots,n\}.
\end{equation}
This interpretation of the integral in \eqref{MDE} is justified by the following vectorial equation
\begin{equation}\label{GLDE}
 \vec{y}(t) = \vec{y}_0 + \int_{t_0}^{t} \dd[G(s)]\,\vec{f}(s,\vec{y}(s)), \ \ \ t \in I,
\end{equation}
where for each $t\in I$, $G(t)\in\R^{n\times n}$ is the diagonal matrix
\[
G(t)=
\begin{bmatrix}
g_{1}(t) & 0 & \ldots & 0
\\
0 & g_{2}(t) & \ldots & 0
\\
\vdots & \vdots & \ddots & \vdots
\\
0 & 0 & \ldots & g_{n}(t)
\end{bmatrix}.
\]
In the case when $\vec{f}(t,\vec{y}(t))=\vec{y}(t)$, equation \eqref{GLDE} becomes a particular case of the
so-called generalized linear differential equation; a branch of Kurzweil equations theory which has been extensively
investigated in \cite{STV, Tv5}.

Clearly, by taking $g_i=g:I\to\R$ for all $i \in \{1,\dots,n\}$, from \eqref{MDE} we retrieve the notion of measure differential
equation introduced in \cite{fed}. To avoid any misunderstanding, equations of the type \eqref{MDE} will be called
vectorial measure differential equations. That said, we define the concept of solution for vectorial measure differential equations.

\begin{definition}
A function $\vec y\in G(I,\mathbb R^n),\, \vec y=(y_{1},\dots, y_{n}),$ is a solution of equation \eqref{MDE} if $y_i:I\to\mathbb R$ satisfies
\eqref{MDEint2} for each $i \in \{1,\dots,n\}$.
\end{definition}

In view of Theorem \ref{indefiniteKS}, we can see that a solution of \eqref{MDE}
somehow shares the discontinuity points of $\vec g$.

Next we introduce the key concepts related to the question of extremal solutions. For that, we consider a partial ordering in $\mathbb R^n$ as follows: given two vectors $\vec{x}=(x_1, \dots,x_n)$ and $\vec{y}=(y_1,\dots,y_n)$, we write $\vec{x} \le \vec{y}$ if $x_i \le y_i$ for each $i \in \{1,\dots,n\}$. Naturally, for functions $\vec{\alpha}, \, \vec{\beta}:I \to \mathbb R^n$, we write $\vec{\alpha} \le \vec{\beta}$ provided $\vec{\alpha}(t) \le \vec{\beta}(t)$ for every $t \in I$.
Moreover, if $\vec{\alpha},\,\vec{\beta}\in G(I,\R^n)$ are such that $\vec{\alpha} \le \vec{\beta}$, then we define the functional interval
\begin{equation*}\label{interval}
[\vec{\alpha},\vec{\beta}]=\{ \vec{\eta} \in G(I,\mathbb R^n) \, : \, \vec{\alpha} \le \vec{\eta} \le \vec{\beta}\}.
\end{equation*}
When we want to emphasize that we are considering (lower/upper) solutions which belong to a certain $[\vec{\alpha},\vec{\beta}]$, we say that $\vec\varphi$ is a (lower/upper) solution between $\vec{\alpha}$ and $\vec{\beta}$.

\medbreak

The extremal (greatest and least) solutions to vectorial measure differential equations are defined in the standard way considering the
aforementioned ordering, that is, if $\vec{y}: I\to \R^n$ is a solution of \eqref{MDE} we say that:

$\bullet$ $\vec{y}$ is the greatest solution of \eqref{MDE} on $I$ if any other solution $\vec{x}:I\to\R^n$
satisfies $\vec{x}\leq \vec{y}$;

$\bullet$ $\vec{y}$ is the least solution of \eqref{MDE} on $I$ if any other solution $\vec{x}:I\to\R^n$
satisfies $\vec{y}\leq \vec{x}$.

In the following sections we will investigate the existence of extremal solutions for \eqref{MDE} between lower and upper solutions.

\begin{definition}\label{lower}
A lower solution of (\ref{MDE}) is a function
$\vec{\alpha}\in G(I,\mathbb R^n)$ such that $\vec{\alpha}(t_0)\leq \vec{y_0}$ and 
$$\alpha_i(v)-\alpha_i(u) \le \int_{u}^{v}f_i(s,\vec{\alpha}(s)) \, \dd g_i(s),\quad [u,v]\subseteq I,
\quad i \in \{1,\dots,n\}.$$
Symmetrically, an upper solution of (\ref{MDE}) is a function
$\vec{\beta}\in G(I,\mathbb R^n)$ such that $\vec{y}_0\leq\vec{\beta}(t_0)$ and
$$\beta_i(v)-\beta_i(u) \geq \int_{u}^{v}f_i(s,\vec{\beta}(s)) \, \dd g_i(s),\quad [u,v]\subseteq I,
\quad i \in \{1,\dots,n\}.$$
\end{definition}

\begin{remark}\label{lower+}
If $\vec{\alpha}:I\to \R^n$ is a lower solution of \eqref{MDE}, then for all $i\in\{1,\dots, n\}$
\begin{align}
\Delta^+\alpha_i(t)&=\alpha_i(t+)-\alpha_i(t)\leq f_i(t,\vec{\alpha}(t))\Delta^+g_i(t), \quad t\in I,\label{lower-jump1}\\
\Delta^-\alpha_i(t)&=\alpha_i(t)-\alpha_i(t-)\leq f_i(t,\vec{\alpha}(t))\Delta^-g_i(t), \quad t\in I.\label{lower-jump2}
 \end{align}
Obviously, the reverse inequalities hold for upper solutions.
\end{remark}

\section{An existence result for the scalar case}
In this section, we turn our attention to the scalar case of equation (\ref{MDE}), that is, equations of the form
\begin{equation}\label{MDE1}
x(t)=x_0+\int_{t_0}^{t}{f(s,x(s)) \, \dd g(s)}, \quad t \in I,
\end{equation}
where $x_0\in\R$, $f:I \times \mathbb R \to \mathbb R$ and $g:I\to\mathbb R$ is nondecreasing and left-continuous.

\smallskip
The existence of the greatest and the least solutions for \eqref{MDE1} has been already investigated in \cite{as}.
In this section, we address one of the questions posed in \cite{as}, namely, the existence of (extremal) solutions between
given lower and upper solutions. Our result somehow generalizes what is available in the classical theory of ordinary differential
equations (cf. \cite{lp}) as the function $f$ is not required to be continuous with respect to the first variable.

For the convenience of the reader we will recall the main results in \cite{as}. Given a set $B\subseteq\R$, we consider the following conditions:
\begin{enumerate}
\item[{\rm(C1)}] The integral $\int_{t_0}^{t_0+L} f(t,y)\,{\rm d}g(t)$ exists for every $y\in B$.
\item[{\rm(C2)}] There exists a function $M:I\to\mathbb R$, which is Kurzweil-Stieltjes integrable with respect to $g$, such that
\[
\left|\int_u^v f(t,y)\,{\rm d}g(t)\right|\leq \int_u^v M(t)\,{\rm d}g(t)
\]
for every $y\in B$ and $[u,v]\subseteq I$.	
\item[{\rm (C3)}] For each $t\in I$, the mapping \ $y\mapsto f(t,y)$ is continuous in $B$.
\end{enumerate}

Next lemma is a important tool for dealing with conditions above (see \cite[Lemma 3.1]{as}).

\begin{lemma}\label{conditions-regulated}
Assume that $f:I \times B\to\mathbb R$ satisfies conditions {\rm (C1), (C2), (C3)}. Then for each function $x\in G(I,B)$, the integral $\int_{t_0}^{t_0+L} f(t,x(t))\,\dd g(t)$ exists, and we have
\begin{equation}\label{estimate}
\left|\int_u^v f(t,x(t))\,{\rm d}g(t)\right|\leq \int_u^v M(t)\,{\rm d}g(t),\quad [u,v]\subseteq I.
\end{equation}
\end{lemma}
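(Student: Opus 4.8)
\textbf{Proof proposal for Lemma \ref{conditions-regulated}.}

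The plan is to reduce the assertion for an arbitrary $x\in G(I,B)$ to the hypotheses (C1)--(C3) by approximating $x$ uniformly by step functions (finite-valued functions), and then exploiting the structural estimate (C2) to pass to the limit in the Kurzweil--Stieltjes integral. First I would invoke Theorem \ref{regulated} applied to $x$: since $x$ is regulated with values in $B$, for each $\varepsilon>0$ there is a division $D_{x,\varepsilon}$ of $I$ such that $x$ oscillates by less than $\varepsilon$ on each open subinterval; using this I would build a step function $x_\varepsilon:I\to B$ that agrees with $x$ at the division points and takes a constant value in $B$ (e.g.\ the value of $x$ at some interior point) on each open subinterval, so that $\|x_\varepsilon-x\|_\infty<\varepsilon$. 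For such a step function, $f(t,x_\varepsilon(t))$ is, on each piece, either $f(t,\text{const})$ — whose integral with respect to $g$ exists by (C1) — or a single-point evaluation, which contributes only through the jumps of $g$; hence $\int_{t_0}^{t_0+L} f(t,x_\varepsilon(t))\,\dd g(t)$ exists, and by (C2) together with additivity over adjacent subintervals it satisfies the estimate $\bigl|\int_u^v f(t,x_\varepsilon(t))\,\dd g(t)\bigr|\le\int_u^v M(t)\,\dd g(t)$ for every $[u,v]\subseteq I$.

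Next I would show that the family $\{t\mapsto f(t,x_\varepsilon(t))\}$ converges, as $\varepsilon\to0$, to $t\mapsto f(t,x(t))$ in a sense strong enough to carry the integral along. Pointwise convergence is immediate from (C3): for each fixed $t$, $x_\varepsilon(t)\to x(t)$ in $B$, so $f(t,x_\varepsilon(t))\to f(t,x(t))$. To upgrade this to convergence of the integrals I would use a convergence theorem for the Kurzweil--Stieltjes integral — the bounded/dominated convergence theorem in the form available in \cite{STV,Tv5} — whose hypotheses are exactly: pointwise convergence of the integrands plus a uniform bound of the type furnished by the integrals $\int_u^v M\,\dd g$ via (C2). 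Concretely one checks that the indefinite integrals $h_\varepsilon(t)=\int_{t_0}^t f(s,x_\varepsilon(s))\,\dd g(s)$ are uniformly bounded and equiregulated (Remark \ref{Fra1}, with the nondecreasing bound $t\mapsto\int_{t_0}^t M\,\dd g$ in place of $h$), hence, along a subsequence, converge uniformly; the limit is forced to be $\int_{t_0}^t f(s,x(s))\,\dd g(s)$, which in particular establishes existence of that integral. Finally, the inequality \eqref{estimate} for $x$ follows by passing to the limit in the corresponding inequality for $x_\varepsilon$, since the right-hand side $\int_u^v M\,\dd g$ does not depend on $\varepsilon$.

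The main obstacle I anticipate is the careful handling of the single-point ``corrections'' at the division points: a step function built from $x$ must reproduce the values of $x$ at the (at most countably many, but in $D_{x,\varepsilon}$ finitely many) breakpoints in order to remain $B$-valued and close to $x$ in sup norm, and these point values interact with the atoms of $dg$. One must verify that $f(\cdot,x_\varepsilon(\cdot))$ is genuinely $dg$-integrable despite being modified at finitely many points — this is where the Hake-type behaviour (Theorem \ref{Hake}) and the explicit jump formulas of Theorem \ref{indefiniteKS} are used — and that these finitely many atomic contributions also converge as $\varepsilon\to0$, which again reduces to (C3) applied at each breakpoint together with the fact that $g$ has only countably many jumps of summable size. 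Once these bookkeeping points are settled, assembling the estimate is routine. $\blacksquare$
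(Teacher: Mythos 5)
The paper does not actually prove this lemma: it is imported verbatim from \cite[Lemma 3.1]{as}, so there is no internal proof to compare against. Your argument is essentially the standard one used there: approximate $x\in G(I,B)$ uniformly by $B$-valued step functions via Theorem \ref{regulated}, use (C1), additivity and the jump formulas of Theorem \ref{indefiniteKS} to get integrability of $f(\cdot,x_k(\cdot))$ together with the uniform bound $\bigl|\int_u^v f(t,x_k(t))\,\dd g(t)\bigr|\le\int_u^v M(t)\,\dd g(t)$ from (C2), and then pass to the limit using (C3) and the convergence theorem for Kurzweil--Stieltjes integrals in which the domination is expressed through indefinite integrals (available in \cite{STV,Tv5}). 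That theorem is the correct and sufficient engine: its hypotheses are exactly pointwise convergence of the integrands (from (C3) and $x_k\to x$ uniformly) plus the two-sided bound $-\int_u^v M\,\dd g\le\int_u^v f(t,x_k(t))\,\dd g\le\int_u^v M\,\dd g$, and its conclusion gives both the existence of $\int_u^v f(t,x(t))\,\dd g(t)$ and, in the limit, the estimate \eqref{estimate}.

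One caution: your ``concrete check'' --- extracting a uniformly convergent subsequence of the primitives $h_\varepsilon$ via Remark \ref{Fra1} and asserting that the limit ``is forced to be'' $\int_{t_0}^{t}f(s,x(s))\,\dd g(s)$ --- is not a valid substitute for that convergence theorem: relative compactness of the primitives says nothing about the existence of the limiting integral, which is precisely what has to be proved, so this step on its own would be circular. Keep the convergence theorem as the actual tool, and record explicitly the single-point bound $|f(\tau,y)|\,\Delta^{\pm}g(\tau)\le M(\tau)\,\Delta^{\pm}g(\tau)$ for $y\in B$, obtained from (C2) by shrinking $[u,v]$ to $\tau$ and using Theorem \ref{indefiniteKS} (or Theorem \ref{Hake}); this is what makes the finitely many breakpoint corrections harmless both for integrability and for the estimate satisfied by the step-function approximations.
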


The following existence result for equations \eqref{MDE1} derives from  \cite[Theorem 3.2]{as}.

\begin{theorem}\label{MDE-existence}
If $f:I\times \R\to\mathbb R$ satisfies conditions {\rm (C1)}, {\rm (C2)}, {\rm (C3)} with $B=\R$, then equation \eqref{MDE1} has a solution on $I$.
\end{theorem}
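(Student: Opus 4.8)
The plan is to obtain a solution of \eqref{MDE1} as a fixed point of the Kurzweil--Stieltjes integral operator via the Schauder fixed point theorem, invoking the compactness criterion in the space of regulated functions (Theorem \ref{Fra}) together with the estimates furnished by Lemma \ref{conditions-regulated}. Concretely, I would define
\[
T\colon G(I,\R)\to G(I,\R),\qquad (Tx)(t)=x_0+\int_{t_0}^{t}f(s,x(s))\,\dd g(s),\quad t\in I,
\]
which is well defined by Lemma \ref{conditions-regulated} (with $B=\R$), and whose fixed points are precisely the solutions of \eqref{MDE1}. Writing $H(t)=\int_{t_0}^{t}M(s)\,\dd g(s)$, which is nondecreasing since $g$ is, the estimate \eqref{estimate} gives $|(Tx)(v)-(Tx)(u)|\le H(v)-H(u)$ for all $[u,v]\subseteq I$ and all $x\in G(I,\R)$, and $|(Tx)(t_0)|=|x_0|$. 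Hence, by Remark \ref{Fra1}, the image $T(G(I,\R))$ is a relatively compact subset of $G(I,\R)$; in particular it is bounded, so there is $R>0$ with $\|Tx\|_\infty\le R$ for every $x$. Restricting $T$ to the closed, convex, bounded set $K=\{x\in G(I,\R):\|x\|_\infty\le R,\ |x(v)-x(u)|\le H(v)-H(u)\ \text{for}\ [u,v]\subseteq I\}$, which contains $T(G(I,\R))$ and is mapped into itself, we are in position to apply Schauder's theorem on $K$ once continuity of $T$ is established.

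The key step is therefore the continuity of $T$ on $G(I,\R)$ with respect to $\|\cdot\|_\infty$. I would take $x_k\to x$ uniformly in $G(I,\R)$ and show $\int_{t_0}^{t}f(s,x_k(s))\,\dd g(s)\to\int_{t_0}^{t}f(s,x(s))\,\dd g(s)$, uniformly in $t\in I$. The pointwise convergence $f(s,x_k(s))\to f(s,x(s))$ for every $s\in I$ follows from (C3) (continuity of $y\mapsto f(s,y)$) since $x_k(s)\to x(s)$; the common bound $|f(s,x_k(s))|$ controlled in integral form by $M$ comes from (C2)/\eqref{estimate} applied on degenerate-type increments, more precisely from the estimate controlling the integrals of $f(\cdot,x_k(\cdot))$ by those of $M$. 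A dominated-convergence-type theorem for the Kurzweil--Stieltjes integral (available in \cite{STV,Tv5}) then yields convergence of the integrals over $[t_0,t]$; uniformity in $t$ is recovered from the fact that the ``tail'' increments are uniformly controlled by $H(v)-H(u)$, which is small on small $g$-measure sets. Alternatively, and perhaps more cleanly, one can argue by contradiction using the relative compactness already in hand: if $Tx_k\not\to Tx$, pass to a subsequence with $Tx_{k_j}\to z$ in $G(I,\R)$, $z\neq Tx$, and identify $z=Tx$ by passing to the limit inside the integral via the same convergence theorem, reaching a contradiction.

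I expect the main obstacle to be precisely this passage to the limit under the Kurzweil--Stieltjes integral sign: one must ensure that the hypotheses of the relevant convergence theorem for Kurzweil--Stieltjes integrals are met by the integrands $f(\cdot,x_k(\cdot))$, which is where conditions (C1)--(C3) and Lemma \ref{conditions-regulated} do the essential work (they guarantee existence of all the integrals involved, a uniform integrable majorant $M$, and pointwise convergence of the integrands), and where the left-continuity and monotonicity of $g$ guarantee that $H$ is a genuine regulated nondecreasing control with no pathological behaviour at jumps. Once continuity is secured, the remaining verifications---that $K$ is nonempty, closed, convex and bounded in $G(I,\R)$, and that $T(K)\subseteq K$---are routine, and Schauder's fixed point theorem delivers a fixed point $x\in K$, i.e.\ a solution of \eqref{MDE1} on $I$. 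I would remark that this argument is essentially \cite[Theorem 3.2]{as} specialised to $B=\R$, included here for completeness and because the construction of $T$ and the set $K$ will be reused in the sections on lower and upper solutions.
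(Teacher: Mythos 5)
The paper does not actually prove this statement: it is imported verbatim from \cite[Theorem 3.2]{as}, specialised to $B=\R$, so the ``official'' argument is nothing more than that citation. Your Schauder construction is, as you yourself observe, essentially a reconstruction of the proof of the cited theorem, and its skeleton is sound: $T$ is well defined by Lemma \ref{conditions-regulated}, the estimate \eqref{estimate} gives $|(Tx)(v)-(Tx)(u)|\le H(v)-H(u)$ with $H(t)=\int_{t_0}^t M(s)\,\dd g(s)$, and Remark \ref{Fra1} together with Theorem \ref{Fra} yields the compact convex invariant set on which Schauder applies. (Minor point: $H$ is nondecreasing because (C2) forces $\int_u^v M\,\dd g\ge 0$ for every $[u,v]\subseteq I$, not merely because $g$ is nondecreasing; $M$ itself need not be nonnegative.)

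The step you should not leave at the level of a gesture is the continuity of $T$, and your justification of the majorant there is not correct as stated. Condition (C2) and \eqref{estimate} dominate the \emph{integrals} of $f(\cdot,x_k(\cdot))$, not the integrands: applying \eqref{estimate} to shrinking increments only gives $|f(t,x_k(t))|\,\Delta^+g(t)\le M(t)\,\Delta^+g(t)$, i.e.\ a pointwise bound $|f(t,x_k(t))|\le M(t)$ at the jump points of $g$ and nothing at its continuity points. The dominated convergence theorems for the Kurzweil--Stieltjes integral in \cite{STV,Tv5} are stated with a pointwise integrable majorant, so they do not apply under the integral-form domination you have; and your fallback argument via relative compactness still requires identifying the limit by passing to the limit under the integral, so it does not circumvent the issue. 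Carrying out this passage to the limit with only pointwise convergence of the integrands (from (C3)) plus the integral-form bound is exactly the technical content of the proof of \cite[Theorem 3.2]{as}. So either cite that theorem outright, as the paper does, or import from \cite{as} the convergence result tailored to (C1)--(C3); with that in hand, the rest of your argument (invariance of $K$, compactness via Remark \ref{Fra1}, Schauder) goes through.
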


Based on Theorems 4.4 and 4.12 from \cite{as}, we deduce the following result about extremal solutions.

\begin{theorem}\label{MDE-greatest}
Suppose that $f:I\times \R\to\mathbb R$ satisfies conditions {\rm (C1)}, {\rm (C2)}, {\rm (C3)} with $B=\R$. Further, assume that
\begin{enumerate}
  \item[{\rm (C4)}] for each $t\in I$ the mapping \ $u\in\R\mapsto u+f(t,u)\Delta^+g(t)$ is nondecreasing.
\end{enumerate}
If equation \eqref{MDE1} has a solution on $I$, then it has the greatest solution $x^*$ and the least solution $x_*$ on $I$.
Moreover, for each $t\in I$ we have
\begin{align*}
	x^*(t)&=\sup\{\alpha(t):\,\alpha\mbox{ is a lower solution of \eqref{MDE1} on }[t_0,t]\},\\
	x_*(t)&=\inf\{\beta(t):\,\beta\mbox{ is an upper solution of \eqref{MDE1} on }[t_0,t]\}.
\end{align*}
\end{theorem}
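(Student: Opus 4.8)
The plan is to show that the set of lower solutions on $[t_0,t]$ forms an upward-directed family whose pointwise supremum is again a lower solution, and that this supremum is actually a solution; a symmetric argument handles the least solution.

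First I would establish that lower solutions are stable under the pointwise maximum: if $\alpha_1,\alpha_2$ are lower solutions of \eqref{MDE1} on $[t_0,t]$, then $\alpha=\max\{\alpha_1,\alpha_2\}$ is again a lower solution. This is the standard argument, but here one must work with the Kurzweil--Stieltjes integral. On any subinterval $[u,v]$, partition it according to which of the two functions achieves the maximum at the endpoints; on a piece where, say, $\alpha=\alpha_1$ at both ends but $\alpha\geq\alpha_1$ inside, monotonicity of the integral and condition (C4) (to control the jump contributions coming from $\Delta^+g$) give $\alpha(v)-\alpha(u)=\alpha_1(v)-\alpha_1(u)\leq\int_u^v f(s,\alpha_1(s))\,\dd g(s)\leq\int_u^v f(s,\alpha(s))\,\dd g(s)$, where the last inequality requires care since $f$ need not be monotone in $y$ --- instead I would argue locally, splitting $[u,v]$ at the (at most countably many) points where the maximizing index switches and using left-continuity of $g$ together with \eqref{lower-jump1}--\eqref{lower-jump2}. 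Alternatively, and perhaps more cleanly, I would invoke Theorem \ref{MDE-greatest} itself applied on successively refined subintervals, bootstrapping from the scalar extremal solution result.

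Next I would consider $x^*(t)=\sup\{\alpha(t):\alpha\text{ is a lower solution on }[t_0,t]\}$. Boundedness of this supremum follows from (C2)--(C3) via Lemma \ref{conditions-regulated}: every lower solution $\alpha$ satisfies $\alpha(v)-\alpha(u)\leq\int_u^v M\,\dd g$, and combined with $\alpha(t_0)\leq x_0$ this confines all lower solutions to a fixed band, so by Remark \ref{Fra1} the family of lower solutions is relatively compact in $G(I)$ and hence, by Proposition \ref{sup}, $x^*$ is regulated. Then I would show $x^*$ is itself a lower solution: by the directedness established above, one can extract an increasing sequence $(\alpha_k)$ of lower solutions with $\alpha_k(t)\uparrow x^*(t)$ for each $t$ in a countable dense set including all discontinuity points of $g$; relative compactness yields a uniformly convergent subsequence whose limit $\widetilde\alpha$ is a lower solution (passing to the limit under the Kurzweil--Stieltjes integral is legitimate here by the dominated-type convergence available from the uniform bound $M$ and uniform convergence), and $\widetilde\alpha=x^*$. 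Finally, to upgrade $x^*$ from a lower solution to a genuine solution, I would apply the existence result Theorem \ref{MDE-existence} (whose hypotheses hold by assumption) to produce a solution $x$, note that $x$ is simultaneously a lower and an upper solution, hence $x\leq x^*$ and also $x^*\leq x$ by comparing $x^*$ against the upper solution $x$ --- giving $x^*=x$, a solution; maximality is then immediate from the definition of $x^*$ as a supremum over all lower solutions (every solution being a lower solution).

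The main obstacle I anticipate is the closure step: justifying that the pointwise supremum of lower solutions is again a lower solution, specifically passing to the limit inside $\int_u^v f(s,\alpha_k(s))\,\dd g(s)$ without monotonicity of $f$ in its second argument. The tools to overcome this are (C3) (continuity of $f(t,\cdot)$), the uniform domination by $M$ from (C2) and Lemma \ref{conditions-regulated}, and a convergence theorem for the Kurzweil--Stieltjes integral; one also has to verify the jump inequalities \eqref{lower-jump1}--\eqref{lower-jump2} survive the limit, which is where (C4) re-enters to guarantee compatibility of the limiting jumps with the prescribed $\Delta^+g$. A secondary technical point is ensuring that the "greatest lower solution on $[t_0,t]$" glues consistently as $t$ varies, so that $x^*$ defined pointwise genuinely solves the equation on all of $I$; this follows from uniqueness of the construction on each subinterval and the additivity of the integral over adjacent intervals.
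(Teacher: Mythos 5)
There is a genuine gap, and it sits exactly at the step you flag as the "upgrade" of $x^*$ to a solution. You propose to take an arbitrary solution $x$ furnished by Theorem \ref{MDE-existence}, observe that $x$ is both a lower and an upper solution, and conclude $x^*\le x$ "by comparing $x^*$ against the upper solution $x$". No such comparison principle is available: under (C1)--(C4) a lower solution need not lie below an upper solution, nor below an arbitrary solution. If your step were valid it would show that every solution coincides with $x^*$, i.e.\ uniqueness, which contradicts the very setting of the theorem (distinct greatest and least solutions). Concretely, take $g(t)=t$, $x_0=0$ and $f(t,y)=3\min\{|y|,1\}^{2/3}$: conditions (C1)--(C4) hold (here $\Delta^+g\equiv 0$), $x\equiv 0$ is a solution (hence an upper solution), while $\alpha(t)=t^3$ (truncated appropriately for large $t$) is a lower solution exceeding it. What is true, and what must be proved, is only that lower solutions lie below the \emph{greatest} solution; that is a separate comparison argument of the type carried out in the proof of Theorem \ref{exn1} (the contradiction at $t_2$ via the Hake property, Remark \ref{lower+} and the monotonicity of $u\mapsto u+f(t,u)\Delta^+g(t)$), and it cannot be replaced by citing a generic solution. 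Note also that your "alternative" of invoking Theorem \ref{MDE-greatest} on subintervals is circular.

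A second, earlier, problem is the compactness claim for the family of all lower solutions. Definition \ref{lower} only gives the one-sided bound $\alpha(v)-\alpha(u)\le\int_u^v f(s,\alpha(s))\,\dd g(s)\le\int_u^v M\,\dd g$; a lower solution may decrease arbitrarily fast and have arbitrarily large downward jumps (already for $f\equiv 0$ every nonincreasing regulated function below $x_0$ is a lower solution). Hence the family is neither pointwise bounded below nor equiregulated, and Remark \ref{Fra1}, Theorem \ref{Fra} and Proposition \ref{sup} cannot be applied to it as you do; the regularity of $x^*$ and the extraction of a convergent increasing sequence both collapse. For comparison, the paper itself does not prove this statement: it imports it from Theorems 4.4 and 4.12 of \cite{as}, whose route is the reverse of yours --- first construct the greatest and least solutions from the \emph{solution set} (which, unlike the set of lower solutions, does enjoy the two-sided bound of (C2) and hence the compactness of Remark \ref{Fra1}), using (C4) to show that the pointwise supremum of solutions is again a solution, and only then establish the characterization as the supremum of lower solutions via the comparison lemma described above. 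Your closure step for an increasing sequence of lower solutions (dominated convergence under the Kurzweil--Stieltjes integral with nondecreasing integrator, using (C3)) is salvageable, but without the compactness of the right family and without the genuine comparison argument the proposal does not yield the theorem.
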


Now, we will investigate the existence of extremal solutions for equation \eqref{MDE1} provided a lower and an upper solutions are known and well-ordered.

\begin{theorem}
\label{exn1}
Suppose that \eqref{MDE1} has a lower solution $\alpha$ and an upper solution $\beta$ such that $\alpha(t) \leq \beta(t)$ for all $t\in I$.
Assume that the following conditions hold:
\begin{enumerate}
\item[{\rm(H1)}] The integral $\int_{t_0}^{t_0+L} f(t,y)\,{\rm d}g(t)$ exists for every 
$y \in E=[\inf_{s\in I} \alpha(s)\,,\,\sup_{s\in I}\beta(s)]$.
\item[{\rm(H2)}] There exists a function $M:I\to\mathbb R$, which is Kurzweil-Stieltjes integrable with respect to $g$, such that
\[
\left|\int_u^v f(t, y)\,{\rm d}g(t)\right|\leq \int_u^v M(t)\,{\rm d}g(t)
\]
for every $y\in E$ and $[u,v]\subseteq I$.
\item[{\rm(H3)}] For each $t\in I$, the mapping $y\mapsto f(t,y)$ is continuous in $E$.
\item[{\rm(H4)}] For each $t\in I$ the mapping
\[
u\in[\alpha(t),\beta(t)]\mapsto u+f(t,u)\Delta^+g(t)
\]
is nondecreasing.
\end{enumerate}
Then equation \eqref{MDE1} has extremal solutions between $\alpha$ and $\beta$. 
Moreover, for each $t\in I$ we have
\begin{equation}
\label{x-greatest}
x^*(t)=\sup \{\ell(t) \, : \, \mbox{$\ell$ lower solution of \textup{(\ref{MDE1})} between $\alpha$ and $\beta$} \},
\end{equation}
\begin{equation}
\label{x-least}
x_{*}(t)=\inf \{u(t) \, : \, \mbox{$u$ upper solution of {\rm (\ref{MDE1})} between $\alpha$ and $\beta$} \}.
\end{equation}
\end{theorem}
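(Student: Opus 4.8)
The plan is to reduce Theorem~\ref{exn1} to the already established Theorem~\ref{MDE-greatest} by a truncation argument. Set $E=[\inf_{s\in I}\alpha(s),\sup_{s\in I}\beta(s)]$ and define a modified nonlinearity $\tilde f:I\times\R\to\R$ by truncating the second variable into the order interval determined by $\alpha$ and $\beta$, namely
\begin{equation*}
\tilde f(t,u)=f\bigl(t,\gamma(t,u)\bigr),\qquad \gamma(t,u)=\max\{\alpha(t),\min\{u,\beta(t)\}\}.
\end{equation*}
First I would check that $\tilde f$ inherits conditions (C1), (C2), (C3) with $B=\R$: since $\gamma(t,u)\in[\alpha(t),\beta(t)]\subseteq E$ for all $t,u$, hypothesis (H2) gives the uniform bound by the same $M$, hypothesis (H1) together with Lemma~\ref{conditions-regulated} (applied to the regulated functions $\alpha$, $\beta$) gives existence of the relevant integrals, and continuity of $u\mapsto\tilde f(t,u)$ follows from (H3) together with continuity of $u\mapsto\gamma(t,u)$. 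Verifying (C4) for $\tilde f$ is the point where (H4) enters: for $t\in I$ we must show $u\mapsto u+\tilde f(t,u)\Delta^+g(t)$ is nondecreasing on all of $\R$; on the interval $[\alpha(t),\beta(t)]$ it coincides with the map in (H4), while on $(-\infty,\alpha(t)]$ it equals $u+f(t,\alpha(t))\Delta^+g(t)$, which is nondecreasing in $u$, and similarly on $[\beta(t),\infty)$; matching the pieces at the endpoints uses (H4) again. Hence Theorem~\ref{MDE-existence} gives a solution of the truncated equation, and Theorem~\ref{MDE-greatest} then yields its greatest and least solutions $x^*$, $x_*$, characterized as suprema (resp.\ infima) of lower (resp.\ upper) solutions of the truncated problem on $[t_0,t]$.

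The crux of the argument is then to show that \emph{every} solution $x$ of the truncated equation satisfies $\alpha\le x\le\beta$ on $I$, so that it is in fact a solution of the original equation~\eqref{MDE1} lying between $\alpha$ and $\beta$. I would argue $\alpha\le x$ by contradiction: suppose the set $\{t\in I:x(t)<\alpha(t)\}$ is nonempty and let $\tau$ be its infimum. Using left-continuity of $g$ and the jump relations from Theorem~\ref{indefiniteKS} (so that $x(t-)=x(t)$ and $\alpha(t-)\le\alpha(t)$, with the jump of $\alpha$ controlled by \eqref{lower-jump2} in Remark~\ref{lower+}), one shows $x(\tau)\ge\alpha(\tau)$, so the violation occurs strictly to the right of $\tau$. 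On a right neighborhood of $\tau$ one then compares $x(t)-x(\tau)=\int_\tau^t \tilde f(s,x(s))\,\dd g(s)$ with $\alpha(t)-\alpha(\tau)\le\int_\tau^t f(s,\alpha(s))\,\dd g(s)$; on the set where $x<\alpha$ the truncation forces $\tilde f(s,x(s))=f(s,\alpha(s))$, so the two increments agree there, and the remaining contribution plus the jump term at $\tau$ (handled via \eqref{lower-jump1}, left-continuity of $g$ meaning $\Delta^+g$ may be nonzero and (C4)/(H4) is what makes the jump comparison work) yields $x(t)\ge\alpha(t)$, a contradiction. The inequality $x\le\beta$ is symmetric. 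This comparison step, in particular correctly handling the atoms of $g$ at $\tau$ via the monotonicity hypothesis (H4), is the main obstacle and the only genuinely delicate part.

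Once $\alpha\le x\le\beta$ is known for every solution of the truncated problem, the set of solutions of the truncated equation coincides with the set of solutions of~\eqref{MDE1} lying in $[\alpha,\beta]$, and likewise a function is a lower (resp.\ upper) solution of the truncated equation between $\alpha$ and $\beta$ if and only if it is one for~\eqref{MDE1}; indeed a lower solution $\ell$ of \eqref{MDE1} with $\alpha\le\ell\le\beta$ satisfies $\tilde f(s,\ell(s))=f(s,\ell(s))$ pointwise, and conversely any lower solution of the truncated problem is pinned into $[\alpha,\beta]$ by the same argument as above (with $\ell$ in place of $x$). Therefore $x^*$ and $x_*$ are extremal solutions of~\eqref{MDE1} between $\alpha$ and $\beta$, and the characterizations \eqref{x-greatest}--\eqref{x-least} follow directly from the corresponding formulas in Theorem~\ref{MDE-greatest} applied to the truncated equation. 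I would close by remarking that one should double-check that $\alpha$ itself is a lower solution between $\alpha$ and $\beta$ and $\beta$ an upper solution between $\alpha$ and $\beta$, so the suprema and infima in \eqref{x-greatest}--\eqref{x-least} are taken over nonempty families.
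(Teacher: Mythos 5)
Your overall strategy is exactly the paper's: truncate $f$ into $[\alpha(t),\beta(t)]$, check (C1)--(C3) for the truncated function via Lemma \ref{conditions-regulated} applied to the regulated truncation $m_y(t)=\max\{\alpha(t),\min\{y,\beta(t)\}\}$ (not to $\alpha,\beta$ alone, but this is what you mean), invoke Theorems \ref{MDE-existence} and \ref{MDE-greatest}, and then show that every solution of the truncated equation lies between $\alpha$ and $\beta$. Your verification that the truncated nonlinearity satisfies (C4) on all of $\R$ is in fact spelled out more carefully than in the paper, and it even lets one avoid the paper's case distinction at the crossing point. However, the comparison step --- the part you yourself flag as delicate --- is sketched in a way that does not work. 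You take $\tau=\inf\{t:x(t)<\alpha(t)\}$, prove $x(\tau)\ge\alpha(\tau)$ (fine, this parallels the paper), and then compare $x(t)-x(\tau)$ with $\alpha(t)-\alpha(\tau)$ on a right neighborhood of $\tau$, arguing that on the set where $x<\alpha$ the integrands coincide and dismissing ``the remaining contribution.'' That remaining contribution is precisely the problem: on the portions of $(\tau,t)$ where $x\ge\alpha$ one has $\widetilde f(s,x(s))=f(s,x(s))$ (or $f(s,\beta(s))$), and there is no hypothesis making $f(s,x(s))$ comparable with $f(s,\alpha(s))$ --- (H4) only controls monotonicity at the atoms of $g$, not pointwise monotonicity of $f$ in the state. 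Since immediately to the right of $\tau$ the sets $\{x<\alpha\}$ and $\{x\ge\alpha\}$ may interleave, you cannot conclude $x\ge\alpha$ near $\tau$, and the contradiction does not materialize.

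The missing idea is the paper's choice of localization: fix a violation point $t_1$ with $\alpha(t_1)>x(t_1)$ and set $t_2=\sup\{t\in[t_0,t_1):\alpha(t)\le x(t)\}$, so that $x<\alpha$ on \emph{all} of $(t_2,t_1]$ and the identity $\widetilde f(s,x(s))=f(s,\alpha(s))$ holds on the entire interval of integration. Theorem \ref{Hake} then isolates the single atom at $t_2$, giving
$\int_{t_2}^{t_1}f(s,\alpha(s))\,\dd g(s)=x(t_1)-x(t_2)-\widetilde f(t_2,x(t_2))\Delta^+g(t_2)+f(t_2,\alpha(t_2))\Delta^+g(t_2)$,
and the jump comparison at $t_2$ (via (H4) with the case $x(t_2)>\beta(t_2)$ treated separately, or in one stroke via your extended (C4) for $\widetilde f$) yields $\alpha(t_1)\le x(t_1)$, the desired contradiction; $x\le\beta$ is symmetric. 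One small additional slip: your claim that any lower solution of the truncated problem is ``pinned into $[\vec\alpha,\vec\beta]$ by the same argument'' is false (e.g.\ $\alpha-c$ is a lower solution of the truncated equation for any constant $c>0$), since the pinning argument uses the exact integral identity available only for solutions; but this remark is not needed --- for \eqref{x-greatest} it suffices that every lower solution of \eqref{MDE1} between $\alpha$ and $\beta$ is a lower solution of the truncated problem, hence dominated by $x^*$ by Theorem \ref{MDE-greatest}, and that $x^*$ itself belongs to that family.
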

\begin{proof}
Let us define $\widetilde f:I\times\mathbb R\rightarrow\mathbb R$ as
\[
\widetilde f(t,x)=\left\{
\begin{array}{cl}
f(t, \alpha(t))&\mbox{if $x< \alpha(t)$,}\\
f(t,x)&\mbox{if $\alpha(t) \le x \le \beta(t),$}\\
f(t,\beta(t))&\mbox{if $x>\beta(t)$,}
\end{array}
\right.
\]
and consider the modified problem
\begin{equation}\label{prima1}
x(t)=x_0+\int_{t_0}^{t}{\widetilde f(s,x(s)) \, \dd g(s)}, \quad t\in I.
\end{equation}
Clearly, (H3) ensures that $\widetilde f$ satisfies (C3) with $B=\R$. To show that $\widetilde f$ satisfies both conditions (C1) and (C2) with $B=\R$, let $y\in \R$ and put $m(t)=\max\{\min\{y,\beta(t)\},\alpha(t)\}$, $t\in I$. Thus, $m\in G(I,E)$ and $\widetilde{f}(t,y)=f(t,m(t))$ for every $t\in I$. Note that Lemma \ref{conditions-regulated} and conditions (H1)--(H3) imply that
$\int_{t_0}^{t_0+L} f(t,m(t))\,{\rm d}g(t)$ exists 
and \eqref{estimate} hold; in other words, for $y\in \R$, the integral $\int_{t_0}^{t_0+L}\widetilde f(t,y)\,{\rm d}g(t)$ exists and 
\[
\left|\int_u^v\widetilde f(t, y)\,{\rm d}g(t)\right|\leq \int_u^v M(t)\,{\rm d}g(t),\quad [u,v]\subseteq I.
\]
In summary, $\widetilde f$ satisfies the conditions of Theorem \ref{MDE-existence} and
we conclude that (\ref{prima1}) has a solution defined on the whole of $I$. The existence of the greatest solution $x^*$ and the least
solution $x_*$ of \eqref{prima1} is then a consequence of Theorem \ref{MDE-greatest} and assumption (H4).

It only remains to show that if $x:I\to\R$ is an arbitrary solution of equation (\ref{prima1}), then $\alpha(t) \le x(t) \le \beta(t)$, $t\in I$,
thus proving that $x$ is a solution of (\ref{MDE1}) and the functions $x_*$ and $x^*$ are the intended extremal solutions between $\alpha$ and $\beta$. Reasoning by contradiction, assume that there exists some $t_1\in(t_0,t_0+L]$ such that
\begin{equation}\label{t_1}
  \alpha(t_1)>x(t_1).
\end{equation}
Let $t_2=\sup\{t\in[t_0,t_1):\,\alpha(t)\le x(t)\}.$
By the definition of supremum, either $\alpha(t_2)\leq x(t_2)$ (which includes the case $t_2=t_0$) or there exists a sequence of points $u_k\in (t_0,t_2),k\in\N,$ such that $u_k\to t_2$ and $\alpha(u_k)\le x(u_k)$ for each $k\in\mathbb N$. Therefore, $\alpha(t_2-)\le x(t_2-)$.
Using \eqref{lower-jump2} and the fact that $g$ is left-continuous, we get $\Delta^-\alpha(t_2)\le 0$, that is,
\[
\alpha(t_2)\leq\alpha(t_2-)\le x(t_2-)=x(t_2).
\]
Hence, we must have $\alpha(t_2)\le x(t_2)$ and consequently $t_2<t_1$. We will show that this leads to a contradiction with \eqref{t_1}.
First, observe that
\begin{align}
  \alpha(t_1)-x(t_1)&= \alpha(t_1)-\alpha(t_2)+\alpha(t_2)-x(t_1)\nonumber\\
  &\leq \int_{t_2}^{t_1}f(s,\alpha(s)) \, \dd g(s)+\alpha(t_2)-x(t_1). \label{c}
\end{align}
The definition of $t_2$ implies that $x(t)<\alpha(t)$, $t\in(t_2,t_1]$; thus, by Theorem \ref{Hake} we have
\begin{align*}
   \int_{t_2}^{t_1}f(s,\alpha(s)) \, \dd g(s)  &= \lim_{\sigma\to t_2+} \int_{\sigma}^{t_1}f(s,\alpha(s)) \, \dd g(s) +f(t_2,\alpha(t_2))\Delta^+g(t_2) \\
   & = \lim_{\sigma\to t_2+} \int_{\sigma}^{t_1}\widetilde{f}(s,x(s)) \, \dd g(s) +f(t_2,\alpha(t_2))\Delta^+g(t_2)\\
   &  = \int_{t_2}^{t_1}\widetilde{f}(s,x(s)) \, \dd g(s) -\widetilde{f}(t_2,x(t_2))\Delta^+g(t_2)+f(t_2,\alpha(t_2))\Delta^+g(t_2)\\
   &  = x(t_1)-x(t_2)-\widetilde{f}(t_2,x(t_2))\Delta^+g(t_2)+f(t_2,\alpha(t_2))\Delta^+g(t_2).
\end{align*}
Combining this equality with \eqref{c} we obtain
$$\alpha(t_1)-x(t_1) \leq \alpha(t_2)+f(t_2,\alpha(t_2))\Delta^+g(t_2)-x(t_2)-\widetilde{f}(t_2,x(t_2))\Delta^+g(t_2).$$
At this point we need to distinguish two cases regarding the value of $\widetilde{f}$. If $\alpha(t_2)\leq x(t_2)\leq \beta(t_2)$, then
$\widetilde{f}(t_2,x(t_2))= f(t_2,x(t_2))$. Since condition (H4) implies that
\[
\alpha(t_2)+f(t,\alpha(t_2))\Delta^+g(t_2)\le x(t_2)+f(t,x(t_2))\Delta^+g(t_2),
\]
we conclude that $\alpha(t_1)-x(t_1)\leq 0$, a contradiction. In the case when $x(t_2)> \beta(t_2)$, then $\widetilde{f}(t_2,x(t_2))= f(t_2,\beta(t_2))$, which implies that
$$
  \alpha(t_1)-x(t_1) \leq \alpha(t_2)+f(t_2,\alpha(t_2))\Delta^+g(t_2)-\beta(t_2)-f(t_2,\beta(t_2))\Delta^+g(t_2).
$$
The contradiction again follows from condition (H4), now taking into account $\alpha(t_2)\leq \beta(t_2)$.

The proof that $x \le \beta$ on $I$ is analogous and we omit it. Moreover, equalities \eqref{x-greatest} and \eqref{x-least} follow from Theorem \ref{MDE-greatest}.
\end{proof}

\section{Extremal solutions for vectorial measure differential equations}
Our goal is to extend the results from Section 4 to the vectorial equation
\begin{equation}\label{MDE-v}
 \vec{y}(t) = \vec{y_0} + \int_{t_0}^{t} \vec{f}(s,\vec{y}(s))\,{\rm d}\vec g(s), \ \ \ t \in I,
\end{equation}
where $\vec{y_0}\in \R^n$, $\vec{f}:I\times \R^n\to\mathbb R^n$ and $\vec g:I\to\mathbb R^n$.
Herein, we assume that $\vec g=(g_1,\dots,g_n)$ is nondecreasing and left-continuous, that is, 
for each $i\in\{1,\dots, n\}$, the function $g_i:I\to\mathbb R$ is nondecreasing and left-continuous. 

Like in the theory of
multidimensional equations, the function $\vec f$ is required to be quasimonotone. 
Recall that a function $\vec{f}$ is quasimonotone nondecreasing in a set $E\subseteq I\times \R^n$ if 
given $t\in I$ and vectors $\vec{x}=(x_1,\dots,x_n),$ $\vec y=(y_1,\dots, y_n)$ such that $(t,\vec x),(t,\vec y)\in E$, the following holds:
$$\vec x\leq\vec y\mbox{ \ with $x_i=y_i$ for some }i\in\{1,\dots,n\}
	\quad\Longrightarrow\quad f_i(t,\vec x)\leq f_i(t,\vec y).$$

In what follows, $\vec{e_i}$, $i\in\{1,\dots, n\}$, denotes the vector in $\R^n$ whose $i$-th term is $1$ and all others are zero.

\begin{theorem}\label{extrMDE}
Suppose that \eqref{MDE-v} has a lower solution $\vec\alpha$ and an upper solution $\vec\beta$ such that $\vec{\alpha}\leq\vec{\beta}$ and assume that $\vec{f}$ is quasimonotone nondecreasing in
$E=\{(t,\vec{x})\in I\times\R^n\,:\,\vec{\alpha}(t)\leq\vec{x}\leq\vec{\beta}(t)\}$.
Furthermore, assume that the following conditions hold:
\begin{enumerate}
\item[$(\mathcal H1)$] The integral $\int_{t_0}^{t_0+L} f_i(t,\vec{\eta}(t))\,{\rm d}g_i(t)$ exists for every $\vec{\eta}\in[\vec{\alpha},\vec{\beta}]$ and $i\in\{1,\dots, n\}$.
\item[$(\mathcal H2)$] For each $i\in\{1,\dots, n\}$, there exists a function $M_i:I\to\mathbb R$, which is Kurzweil-Stieltjes integrable with respect to $g_i$, such that
\[
\left|\int_u^v f_i(t,\vec{\eta}(t))\,{\rm d}g_i(t)\right|\leq \int_u^v M_i(t)\,{\rm d}g_i(t)
\]
for every $\vec{\eta}\in[\vec{\alpha},\vec{\beta}]$ and $[u,v]\subseteq I$.
\item[$(\mathcal H3)$] For each $\vec{\eta}\in[\vec{\alpha}, \vec{\beta}]$, $i\in\{1,\dots,n\}$, and $t\in I$, the mapping
\[
u\in[\alpha_i(t),\beta_i(t)]\mapsto  f_i(t,\vec{\eta}(t)+(u-\eta_i(t))\vec{e_i})
\]
is  continuous.
\item[$(\mathcal H4)$] For each $\vec{\eta}\in[\vec{\alpha}, \vec{\beta}]$, $i\in\{1,\dots,n\}$, and $t\in I$, the mapping
\[
u\in[\alpha_i(t),\beta_i(t)]\mapsto u+f_i(t,\vec{\eta}(t)+(u-\eta_i(t))\vec{e_i})\Delta^+g_i(t)
\]
is nondecreasing.
\end{enumerate}
Then equation \eqref{MDE-v} has extremal solutions in $[\vec{\alpha},\vec{\beta}]$. Moreover, for $t\in I$, the greatest solution
$\vec{y}^*=(y^*_1,\dots,y^*_n)$ is given by
\begin{equation}
\label{xmaxsys}
y^*_i(t)=\sup \{\ell_i(t) \, : \, \mbox{$(\ell_1,\dots,\ell_n)$ lower solution of \eqref{MDE-v} in $[\vec{\alpha},\vec{\beta}]$} \},
\end{equation}
and the least solution $\vec{y}_*=(y_{*,1},\dots,y_{*,n})$ is given by
\begin{equation}
\label{xminsys}
y_{*,i}(t)=\inf \{u_i(t) \, : \, \mbox{$(u_1,\dots,u_n)$ upper solution of \eqref{MDE-v} in $[\vec{\alpha},\vec{\beta}]$} \}.
\end{equation}
\end{theorem}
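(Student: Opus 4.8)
The plan is to reduce the system to a family of scalar measure differential equations of the kind studied in Section~4 and then to extract the extremal solutions by an order-theoretic fixed point argument. For each $\vec\eta\in[\vec\alpha,\vec\beta]$ and each $i\in\{1,\dots,n\}$ I would freeze the components of $\vec\eta$ other than the $i$-th and truncate to the ``tube'': writing $\gamma_i(t,u)=\max\{\alpha_i(t),\min\{u,\beta_i(t)\}\}$, I set
\[
\widehat f_i^{\,\vec\eta}(t,u)=f_i\big(t,\vec\eta(t)+(\gamma_i(t,u)-\eta_i(t))\,\vec{e_i}\big),\qquad t\in I,\ u\in\R,
\]
and consider the scalar equation $x(t)=y_{0,i}+\int_{t_0}^{t}\widehat f_i^{\,\vec\eta}(s,x(s))\,\dd g_i(s)$. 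Since the argument of $f_i$ above always lies in $\{(t,\vec x):\vec\alpha(t)\le\vec x\le\vec\beta(t)\}$, hypotheses $(\mathcal H1)$--$(\mathcal H4)$ turn into conditions (C1), (C2) (with the same $M_i$), (C3) and (C4) for $\widehat f_i^{\,\vec\eta}$ on $B=\R$ (the validity of (C4) on all of $\R$ following from $(\mathcal H4)$ by a routine gluing at $u=\alpha_i(t)$ and $u=\beta_i(t)$), so Theorems~\ref{MDE-existence} and \ref{MDE-greatest} apply and yield the greatest and the least solutions of this scalar problem. Using that $\vec\alpha$ and $\vec\beta$ are lower and upper solutions of \eqref{MDE-v}, quasimonotonicity shows that $\alpha_i$ and $\beta_i$ are, respectively, a lower and an upper solution of the non-truncated scalar equation associated with $\vec\eta$; hence the comparison argument already carried out in the proof of Theorem~\ref{exn1} (with $\widehat f_i^{\,\vec\eta}$ in the role of $\widetilde f$) forces every solution of the truncated problem to lie between $\alpha_i$ and $\beta_i$. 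I then set $T\vec\eta=(\mathcal G_1(\vec\eta),\dots,\mathcal G_n(\vec\eta))$, where $\mathcal G_i(\vec\eta)$ denotes the greatest solution of the $i$-th scalar problem, so that $T$ maps $[\vec\alpha,\vec\beta]$ into itself.

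The next step is to record the properties of $T$. It is nondecreasing: since $\gamma_i$ does not depend on $\vec\eta$, quasimonotonicity gives that for $\vec\eta^{(1)}\le\vec\eta^{(2)}$ every scalar lower solution of the problem associated with $\vec\eta^{(1)}$ is a lower solution of the one associated with $\vec\eta^{(2)}$, whence the representation of $\mathcal G_i$ as a pointwise supremum of lower solutions (Theorem~\ref{MDE-greatest}) yields $T\vec\eta^{(1)}\le T\vec\eta^{(2)}$. By Lemma~\ref{conditions-regulated} every $\mathcal G_i(\vec\eta)$ satisfies $|\mathcal G_i(\vec\eta)(v)-\mathcal G_i(\vec\eta)(u)|\le\int_u^v M_i\,\dd g_i$ for $[u,v]\subseteq I$, so by Remark~\ref{Fra1} the family $\{\mathcal G_i(\vec\eta):\vec\eta\in[\vec\alpha,\vec\beta]\}$ is relatively compact in $G(I)$. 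A fixed point $\vec\eta=T\vec\eta$ is a solution of \eqref{MDE-v}, because $\eta_i(t)\in[\alpha_i(t),\beta_i(t)]$ makes $\widehat f_i^{\,\vec\eta}(t,\eta_i(t))=f_i(t,\vec\eta(t))$. Finally, quasimonotonicity again shows that if $\vec\eta\in[\vec\alpha,\vec\beta]$ is a lower solution of \eqref{MDE-v}, then $\vec\eta\le T\vec\eta$ and $T\vec\eta$ is once more a lower solution of \eqref{MDE-v}.

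I would then close the argument via a Knaster--Tarski type reasoning performed inside $G(I,\R^n)$ with the help of Proposition~\ref{sup}. Set $P=\{\vec\eta\in[\vec\alpha,\vec\beta]:\vec\eta\le T\vec\eta\}$; it contains $\vec\alpha$ and is invariant under $T$ by monotonicity. Define $\vec y^*=(y_1^*,\dots,y_n^*)$ by $y_i^*(t)=\sup\{(T\vec\eta)_i(t):\vec\eta\in P\}$; the common Stieltjes modulus bound together with Proposition~\ref{sup} gives $y_i^*\in G(I)$, and $T$-invariance of $P$ gives $y_i^*(t)=\sup\{\eta_i(t):\vec\eta\in P\}$. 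The standard chain ($\vec\eta\in P\Rightarrow\vec\eta\le\vec y^*\Rightarrow\vec\eta\le T\vec\eta\le T\vec y^*$, so $\vec y^*\le T\vec y^*$ and $\vec y^*\in P$; then $T\vec y^*\in P$ forces $T\vec y^*\le\vec y^*$) yields $T\vec y^*=\vec y^*$, so $\vec y^*$ solves \eqref{MDE-v} and belongs to $[\vec\alpha,\vec\beta]$. Since any solution in $[\vec\alpha,\vec\beta]$ is a lower solution of \eqref{MDE-v}, hence lies in $P$, one gets $\vec x\le\vec y^*$ for every such solution $\vec x$, i.e.\ $\vec y^*$ is the greatest solution; and because every lower solution of \eqref{MDE-v} in $[\vec\alpha,\vec\beta]$ lies in $P$ while $\vec y^*$ is itself such a lower solution, \eqref{xmaxsys} follows. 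The least solution and \eqref{xminsys} are obtained by the dual argument, using the least solutions of the scalar problems and the set $\{\vec\eta\in[\vec\alpha,\vec\beta]:T_*\vec\eta\le\vec\eta\}$.

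The main obstacle, I expect, is not the fixed point argument but the preparatory reduction. One must design the truncation so that all arguments of $\vec f$ stay inside the tube $E$ --- this is essential, since $(\mathcal H1)$--$(\mathcal H4)$ are only assumed there and $(\mathcal H3)$ provides merely ``directional'' continuity, not joint continuity of $\vec f$ --- and one must use quasimonotonicity with enough care both to certify $\alpha_i,\beta_i$ as scalar lower and upper solutions and to reproduce the comparison argument of Theorem~\ref{exn1}. A further subtlety is that the supremum defining $\vec y^*$ has to be taken over the image $T(P)$, where the uniform Kurzweil--Stieltjes modulus bound is available, so that Proposition~\ref{sup} applies; the supremum over $P$ itself need not be a regulated function.
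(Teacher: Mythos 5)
Your proposal is correct, and it shares the paper's core reduction --- freezing all components but the $i$-th of a function in $[\vec\alpha,\vec\beta]$, using quasimonotonicity to certify $\alpha_i$ and $\beta_i$ as scalar lower/upper solutions, and invoking the scalar results of Section~4 together with Proposition~\ref{sup} and Remark~\ref{Fra1} to control pointwise suprema --- but the overall organization is genuinely different. The paper introduces no operator: it fixes an arbitrary lower solution $\vec L\in[\vec\alpha,\vec\beta]$, restricts to the family $\mathcal A$ of lower solutions satisfying the $M_i$-modulus bound \eqref{2.} and the division condition \eqref{3.} (this is what makes the pointwise supremum $\vec\xi^*$ regulated), and then shows in one stroke that $\vec\xi^*$ solves \eqref{MDE-v} by comparing it with the greatest solution of the single scalar problem frozen at $\vec\xi^*$ itself; formula \eqref{xmaxsys} then requires the extra Claim~2 to remove the auxiliary $\vec L$. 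You instead define $T\vec\eta$ as the vector of greatest solutions of the frozen truncated scalar problems, prove $T$ is nondecreasing and maps $[\vec\alpha,\vec\beta]$ into itself, and run a hand-made Knaster--Tarski argument on the set $P$ of post-fixed points, taking the supremum over $T(P)$, where the uniform bound by $h_i(t)=\int_{t_0}^tM_i\,\dd g_i$ is automatic, so that Proposition~\ref{sup} applies; this is closer in spirit to the paper's Section~6 (Proposition~\ref{fixedpoint}) than to its own proof of Theorem~\ref{extrMDE}. Your route buys a cleaner hypothesis check (the truncation $\gamma_i$ keeps every argument of $f_i$ inside the tube $E$, so $(\mathcal H1)$--$(\mathcal H4)$ translate directly into (C1)--(C4) for $\widehat f_i^{\,\vec\eta}$ on $B=\R$, whereas the paper's appeal to Theorem~\ref{exn1} for $\Phi_i$ implicitly reads the constant-$y$ conditions (H1)--(H3) through the same truncation), it dispenses with the ad hoc condition \eqref{3.}, and it delivers \eqref{xmaxsys} at once because every lower solution in $[\vec\alpha,\vec\beta]$ lies in $P$; the paper's construction, in exchange, exhibits the greatest solution directly as a supremum of lower solutions in a single pass, with no fixed-point machinery. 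The two subtleties you flag --- the gluing of $(\mathcal H4)$ across $u=\alpha_i(t)$ and $u=\beta_i(t)$, and taking the supremum over $T(P)$ rather than over $P$ itself --- are precisely the points where a naive version of your argument would break, and you handle both correctly.
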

\begin{proof}
For $i\in\{1,\dots,n\}$ and $t\in I$, put $h_i(t)=\int_{t_0}^tM_i(s)\,\dd g_i(s)$ where $M_i$ is the function in $(\mathcal H2)$. Hence each function $h_i:I\to\R$, $i\in\{1,\dots,n\}$, is nondecreasing and left-continuous. Let $\vec{L}=(L_1,\dots,L_n)$ be an arbitrary lower solution of \eqref{MDE-v} in $[\vec{\alpha},\vec{\beta}]$ and
consider the set $\mathcal{A}$ of functions $\vec{\eta}\in [\vec{\alpha}, \vec{\beta}]$, $\vec{\eta}=(\eta_1,\dots,\eta_n)$,
satisfying the following two conditions:
\begin{align}\label{2.}
&\vec{\eta}(t_0)\leq \vec{y_0}\mbox{ \ and \ }
	\eta_i(v)-\eta_i(u)\leq \int_u^v M_i(s)\,{\rm d}g_i(s)\quad\mbox{for \ }[u,v]\subseteq I,\quad i\in\{1,\dots,n\};
\\\label{3.}
&\mbox{for each  \ }i\in\{1,\dots,n\}\mbox{ \ and each \ }\varepsilon>0,\,\,
	D_{\eta_i,\varepsilon}\subset D_{L_i,\varepsilon}\cup D_{h_i,\varepsilon}.
\end{align}
It is not hard to see that $\vec{L}\in\mathcal{A}$; moreover, every solution of \eqref{MDE-v} in
$[\vec{\alpha}, \vec{\beta}]$ belongs to $\mathcal{A}$ (in such a case, condition \eqref{3.} is a consequence of $(\mathcal H2)$).

Define $\vec{\xi^*}=(\xi_1^*,\dots,\xi_n^*)$ where, for each $i\in\{1,\dots,n\}$, \ $\xi_i^*:I\to\R$ is the function given by
\begin{equation}\label{xmaxsys2}
\xi_i^*(t)=\sup\{\eta_i(t): \vec{\eta}\in \mathcal{A} \mbox{ \ and \ $\vec{\eta}$ is a lower solution}\},
\quad t\in I.
\end{equation}
Note that, for each $t\in I$ and $i\in\{1,\dots,n\}$, the set $\{\eta_i(t): \vec{\eta}\in \mathcal{A}\}\subset [\alpha_i(t),\beta_i(t)]$. Therefore, the supremum $\xi_i^*(t)$ is well-defined. Moreover, condition \eqref{3.} ensures that  $\{\eta_i: \vec{\eta}\in \mathcal{A}\}$
is equiregulated (see Lemma \ref{Lem}). Thus, Theorem \ref{Fra} together with Proposition \ref{sup} 
implies that $\xi_i^*$ is regulated for each
$i\in\{1,\dots,n\}$, and consequently $\vec\xi^*\in G(I,\R^n)$.

\medskip

\noindent
{\it Claim 1 -- $\vec{\xi^*}$ is the greatest solution of \textup{(\ref{MDE-v})} in $[\vec{\alpha},\vec{\beta}]$.}
\\
Fix an arbitrary $i\in\{1,\dots,n\}$ and define the function $\Phi_i:I\times\mathbb R\rightarrow\mathbb R$ by $$\Phi_i(t,x)=f_i(t,\vec{\xi^*}(t)+(x-\xi^*_i(t))\vec{e_i}),\quad t\in I,\quad x\in\R.$$
Since $\vec \xi \leq\vec \beta$, the quasimonotonicity of $\vec{f}$ yields
$$\int_{u}^{v}\Phi_i(s,\beta_i(s)) \,\dd g_i(s)
	\leq\int_{u}^{v}f_i(s,\vec{\beta}(s))\,\dd g_i(s)\le \beta_i(v)-\beta(u),
\quad [u,v]\subseteq I,$$
which shows that $\beta_i:I\to\R$ is an upper solution of the scalar problem
\begin{equation}
\label{auxsys}
x(t)=y_{0,i}+\int_{t_0}^{t}\Phi_i(s,x(s)) \,\dd g_i(s), \quad t \in I.
\end{equation}
Using a similar argument, we can show that for any lower solution $\vec{\eta}$ of \eqref{MDE-v} such that 
$\vec{\eta} \in \mathcal{A}$,
the function $\eta_i:I\to\R$ is a lower solution of \eqref{auxsys} between $\alpha_i$ and $\beta_i$. 
Noting that $\Phi_i$ satisfies the conditions of Theorem \ref{exn1}, it follows that (\ref{auxsys}) has the greatest solution $x_i^*:I\to\R$
between $\alpha_i$ and $\beta_i$, and by \eqref{x-greatest} $\eta_i(t) \le x_i^*(t)$, $t\in I$, for any $\vec{\eta} \in \mathcal{A}$ lower solution of \eqref{MDE-v}.
Since the argument is valid for each $i\in\{1,\dots,n\}$, we construct a function $\vec{x^*}=(x_1^*,\dots,x_n^*)$, and obviously
$\vec{\xi^*} \le \vec{x^*}$. The quasimonotonicity of $\vec f$ yields 
$$x_i^*(v)-x_i^*(u)=\int_{u}^{v}\Phi_i(s,x_i^*(s)) \, \dd g_i(s) \le \int_{u}^{v}f_i(s,\vec{x^*}(s)) \,\dd g_i(s),
\quad [u,v]\subseteq I,$$
for each $i\in\{1,\dots,n\}$, that is, $\vec{x^*}$ is a lower solution of (\ref{MDE-v}) in $[\vec{\alpha},\vec{\beta}]$, and
\begin{eqnarray}
|x_i^*(v)-x_i^*(u)|&=&\left|\int_{u}^{v} \Phi_i(s,x_i^*(s))\, \dd g_i(s)\right|
	=\left|\int_{u}^{v}f_i(s,\xi^*(s)+(x_i^*(s)-\xi_i^*(s))\vec e_i)\, \dd g_i(s)\right|
	\nonumber\\
&\leq& \int_{u}^{v}M_i(s)\, \dd g_i(s)=h_i(v)-h_i(u)\nonumber
\end{eqnarray}
for every $[u,v]\subseteq I$ and $i\in\{1,\dots,n\}$. This shows that $\vec x^*$ satisfies \eqref{2.} and \eqref{3.}.
Thus $\vec{x^*} \in \mathcal{A}$, 
and the definition of $\vec{\xi^*}$ implies $\vec{x^*}\leq \vec{\xi^*}.$ In summary, $\vec{\xi}^*=\vec{x^*}$ and
$$\xi_i^*(t)=y_{0,i}+\int_{t_0}^{t}\Phi_i(s,\xi_i^*(s)) \, \dd g_i(s)=y_{0,i}+\int_{t_0}^{t}f_i(s,\vec{\xi^*}(s)) \,\dd g_i(s),
    \quad t\in I,\,i\in\{1,\dots,n\}.$$
Therefore, $\vec{\xi^*}$ is a  solution of (\ref{MDE-v}), and, by (\ref{xmaxsys2}) 
it is the greatest one in $[\vec{\alpha},\vec{\beta}]$.

\medskip

\noindent
{\it Claim 2 -- The greatest solution of \textup{(\ref{MDE-v})} in $[\vec{\alpha},\vec{\beta}]$, $\vec{y^*}=\vec{\xi^*}$, satisfies \textup{(\ref{xmaxsys})}.}
\\
The lower solution $\vec{L} \in [\vec{\alpha},\vec{\beta}]$ was fixed arbitrarily, so $\vec{y^*}$ is greater than or equal to any lower solution
in $[\vec{\alpha},\vec{\beta}]$. On the other hand, $\vec{y^*}$ is a lower solution itself and so (\ref{xmaxsys}) holds.

\medskip
The proof of the existence of the least solution $\vec{y_*}$ as well the validity of (\ref{xminsys}) is analogous.
\end{proof}

\section{Extremal solutions for vectorial measure differential equations with functional arguments}
We will now consider the functional problem
\begin{equation}\label{FMDE}
 \vec{y}(t) = \vec{y_0} + \int_{t_0}^{t} \vec{f}(s,\vec{y}(s),\vec y)\,{\rm d}\vec g(s), \ \ \ t \in I,
\end{equation}
where $\vec{y_0}\in \R^n$, $\vec{f}:I\times \R^n\times G(I,\R^n)\to\mathbb R^n$ and $\vec g:I\to\mathbb R^n$ is nondecreasing and left-continuous. We recall that the integral on the right-hand side should be understood as a vectorial Kurzweil-Stieltjes in the sense presented in Section 3.  

\

Equations \eqref{FMDE} subjected to functional arguments represent a quite general object.
It is not hard to see that functional differential equations of the form
\begin{equation*}\label{func-eq}
\vec y\,'(t)=\vec f(t,\vec y(t),\vec y)
\end{equation*}
can be regarded as \eqref{FMDE} provided the integral of $\vec f$ exists in some sense  
(in such a case $g_i$ corresponds to the identity function for each $i$). 
The class of problems covered by \eqref{FMDE} also includes the so-called
measure functional differential equations in the sense introduced in \cite{fed}.
To see this it is enough to consider
\[
\vec{f}(t,\vec{y}(t),\vec y)=\vec F(t,\vec y_t)\quad\mbox{and}\quad \vec g=(g,\dots,g),
\]
where $r>0$, $\vec F:I\times G([-r,0],\R^n)\to\mathbb R^n$, $g:I\to\mathbb R$ is nondecreasing 
and left-continuous, and for each $t\in I$ the function $\vec y_t:[-r,0]\to\R^n$ denotes the history or 
memory of $\vec y$ in $[t-r,t]$, that is, $\vec{y}_t(\theta)=\vec{y}(t+\theta),$ $\theta\in [-r,0].$

\

Unlike the work developed in previous sections, to investigate the extremal solutions for the problem
\eqref{FMDE} we will use a fixed-point approach; namely, the following result which is a consequence of 
\cite[Theorem 1.2.2]{lihela}.

\begin{proposition} \label{fixedpoint}
Let $\vec\alpha,\vec\beta\in G(I,\R^n)$ be such that $\vec\alpha\leq\vec\beta$ and let $T:[\vec\alpha,\vec\beta]\rightarrow[\vec\alpha,\vec\beta]$ be a nondecreasing map. 
Assume that for each $i\in\{1,\dots, n\}$ there exists a nondecreasing map $h_i:I\rightarrow\R$ such that for all $\vec\gamma\in[\vec\alpha,\vec\beta]$ and all $[u,v]\subseteq I$ the following inequality holds
\begin{equation}\label{Tbound}
|(T\vec\gamma)_i(v)-(T\vec\gamma)_i(u)|\leq h_i(v)-h_i(u).
\end{equation}
Then $T$ has the least fixed point $\vec\gamma_*$ and the greatest fixed point $\vec\gamma^*$ 
in $[\vec\alpha,\vec\beta]$. Moreover,
$$\vec\gamma_*=\min\{\vec\gamma\in[\vec\alpha,\vec\beta]:T\vec\gamma\leq \vec\gamma\},\quad
\vec\gamma^*=\max\{\vec\gamma\in[\vec\alpha,\vec\beta]:\vec\gamma\leq T\vec\gamma\}.$$
\end{proposition}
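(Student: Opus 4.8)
The plan is to deduce Proposition~\ref{fixedpoint} from the abstract fixed-point theorem \cite[Theorem 1.2.2]{lihela}, which asserts the existence of extremal fixed points for a nondecreasing self-map of a nonempty set whose well-ordered and inversely well-ordered chains have supremum and infimum in the set (a ``monotone Zorn'' type hypothesis). Thus the core task is to verify that the interval $[\vec\alpha,\vec\beta]\subset G(I,\R^n)$, equipped with the pointwise partial order, satisfies the chain-completeness condition required to invoke that theorem, using the growth bound \eqref{Tbound} to control chains arising from iterates of $T$. Concretely, I would first observe that $[\vec\alpha,\vec\beta]$ is nonempty (it contains $\vec\alpha$) and that $T$ maps it into itself monotonically by hypothesis, so only the chain-completeness has to be checked.

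The key steps, in order, are as follows. First I would recall that in \cite[Theorem 1.2.2]{lihela} it suffices to have suprema/infima for the specific chains $\{T^n\vec\gamma_0 : n\in\N\}$ (and their transfinite continuations) generated from comparable seeds, or equivalently for all well-ordered chains contained in $T([\vec\alpha,\vec\beta])$; these are the only chains the proof manipulates. Second, given such a chain $\mathcal{C}\subset T([\vec\alpha,\vec\beta])$, every element $\vec\gamma\in\mathcal C$ satisfies $|\gamma_i(v)-\gamma_i(u)|\le h_i(v)-h_i(u)$ for all $[u,v]\subseteq I$ by \eqref{Tbound}, and $|\gamma_i(t_0)|\le\max\{|\alpha_i(t_0)|,|\beta_i(t_0)|\}$; hence, by Remark~\ref{Fra1}, each component family $\{\gamma_i:\vec\gamma\in\mathcal C\}$ is relatively compact in $G(I)$. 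Third, I would define the candidate supremum $\vec\sigma$ componentwise by $\sigma_i(t)=\sup\{\gamma_i(t):\vec\gamma\in\mathcal C\}$; by Proposition~\ref{sup} each $\sigma_i$ is regulated, so $\vec\sigma\in G(I,\R^n)$, and clearly $\vec\alpha\le\vec\sigma\le\vec\beta$, so $\vec\sigma\in[\vec\alpha,\vec\beta]$. Since the order is pointwise, $\vec\sigma$ is an upper bound of $\mathcal C$, and it is the least one because any pointwise upper bound dominates each $\sigma_i(t)$. The argument for infima of inversely well-ordered chains is symmetric, using $\inf$ in place of $\sup$ (noting that $\inf\{\gamma_i(t)\}=-\sup\{-\gamma_i(t)\}$ so Proposition~\ref{sup} still applies). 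Fourth, with chain-completeness established, \cite[Theorem 1.2.2]{lihela} yields the least fixed point $\vec\gamma_*$ and the greatest fixed point $\vec\gamma^*$ of $T$ in $[\vec\alpha,\vec\beta]$, together with the characterizations $\vec\gamma_*=\min\{\vec\gamma:T\vec\gamma\le\vec\gamma\}$ and $\vec\gamma^*=\max\{\vec\gamma:\vec\gamma\le T\vec\gamma\}$, which are part of the conclusion of that theorem.

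The main obstacle is the verification that the relevant chains actually have suprema and infima inside $[\vec\alpha,\vec\beta]$: the pointwise supremum of regulated functions need not be regulated in general, so one genuinely needs the compactness input. This is exactly where the growth bound \eqref{Tbound} does its work: it forces the chain (after one application of $T$) into a set of the type described in Remark~\ref{Fra1}, hence relatively compact and equiregulated, which is the precise hypothesis making Proposition~\ref{sup} applicable. A minor point to be careful about is that \eqref{Tbound} is assumed for elements of $[\vec\alpha,\vec\beta]$ and the bound is then automatically inherited by their images $T\vec\gamma$; since the chains in the monotone fixed-point iteration consist of such images, the estimate is available for every member of the chain, which is all we need. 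Everything else—nonemptiness, monotonicity of $T$, and the pointwise nature of the order making componentwise sups/infs the genuine lattice operations—is routine.
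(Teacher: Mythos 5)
Your proposal is correct in substance and takes the same overall route as the paper: reduce everything to the abstract fixed-point theorem \cite[Theorem 1.2.2]{lihela} for a nondecreasing self-map of $[\vec\alpha,\vec\beta]$, and use the bound \eqref{Tbound} together with Remark \ref{Fra1} to supply the compactness input. The difference is in which form of the completeness hypothesis you verify and how. The paper reads the hypothesis of Theorem 1.2.2 as a sequential condition: for every monotone sequence $\{\vec\gamma_k\}$ in $[\vec\alpha,\vec\beta]$, the image sequence $\{(T\vec\gamma_k)_i\}$ must converge in $G(I)$; this is checked by observing that \eqref{Tbound} and Remark \ref{Fra1} make $\{(T\vec\gamma_k)_i\}$ relatively compact, extracting a uniformly convergent subsequence, and then using monotonicity of the image sequence to upgrade subsequential convergence to convergence of the whole sequence. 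You instead verify an order-theoretic (chain-completeness) condition: every well-ordered chain in $T([\vec\alpha,\vec\beta])$ has a supremum in $[\vec\alpha,\vec\beta]$, which you obtain from Proposition \ref{sup} applied to the equiregulated, pointwise bounded family produced by \eqref{Tbound}, and symmetrically for infima. Your verification is sound on its own terms, and it is arguably closer to the internal chain-iteration mechanism of Heikkil\"a's method, buying you a statement that does not mention the norm at all; the paper's verification is closer to the literal wording of the cited theorem and avoids any claim about what chains the abstract proof manipulates. The one point you should tighten is precisely that claim: you assert, without checking the source, that Theorem 1.2.2 only needs suprema/infima of chains contained in $T([\vec\alpha,\vec\beta])$ and that this is ``equivalent'' to its hypothesis; as the paper uses it, the hypothesis is the convergence of $\{T\vec\gamma_k\}$ for monotone $\{\vec\gamma_k\}$, so either cite the chain-based version of the abstract result explicitly or add the short subsequence-plus-monotonicity step that turns your compactness observation into the required convergence.
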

\begin{proof}
We will apply \cite[Theorem 1.2.2]{lihela} assuming $X=Y=G(I,\R^n)$ equipped with the supremum norm and the partial ordering defined in Section 3. 
Given a monotone sequence $\{\vec\gamma_k\}_{k=0}^\infty$ in $[\vec\alpha,\vec\beta],$ it suffices to show that for each 
$i\in\{1,\dots,n\}$ the sequence $\{(T\vec\gamma_k)_i\}_{k=0}^\infty$ converges in $G(I)$. 
By \eqref{Tbound} and Remark \ref{Fra1}, for each $i\in\{1,\dots,n\}$, $\{(T\vec\gamma_k)_i\}_{k=0}^\infty$ is a relatively compact subset of $G(I)$, 
hence it contains a convergent subsequence. The result then follows from the monotonicity of the sequence 
$\{(T\vec\gamma_k)_i\}_{k=0}^\infty$.
\end{proof}

Next result is the analogue of Theorem \ref{extrMDE} for functional equations. 
Note that the notion of lower and upper solutions for equation \eqref{FMDE} is an obvious extension 
of Definition \ref{lower}. Indeed, $\vec{\alpha}\in G(I,\mathbb R^n)$ is a lower solution of \eqref{FMDE} 
provided $\vec{\alpha}(t_0)\leq \vec{y_0}$ and 
$$\alpha_i(v)-\alpha_i(u) \le \int_{u}^{v}f_i(s,\vec{\alpha}(s),\vec\alpha)\,\dd g_i(s),\quad [u,v]\subseteq I,\quad i\in\{1,\dots, n\},$$ 
while the reverse inequalities are used to define upper solutions of \eqref{FMDE}.

\begin{theorem}\label{funcexist}
Suppose that \eqref{FMDE} has a lower solution $\vec\alpha$ and an upper solution $\vec\beta$ such that $\vec{\alpha}\leq\vec{\beta}$. 
For each $\vec\gamma\in[\vec\alpha,\vec\beta]$, denote by 
$\vec f_{\vec\gamma}:I\times\mathbb R^n\rightarrow\mathbb R^n$ the function defined as  
$\vec f_{\vec\gamma}(t,\vec x)=\vec f(t,\vec x,\vec\gamma)$.  
Assume that for each $\vec\gamma\in[\vec\alpha,\vec\beta]$, the function $\vec f_{\vec\gamma}$ is quasimonotone 
nondecreasing in $E=\{(t,\vec{x})\in I\times\R^n\,:\,\vec{\alpha}(t)\leq\vec{x}\leq\vec{\beta}(t)\}$. Furthermore, assume that the following conditions hold:
\begin{enumerate}
\item[$(\mathbb{H}1)$] 
The integral $\int_{t_0}^{t_0+L} (f_{\vec\gamma})_i(s,\vec\eta(t))\, \dd g_i(s)$ exists for every $\vec\gamma,\vec\eta\in[\vec\alpha,\vec\beta]$ and $i\in\{1,\dots, n\}$.
\item[$(\mathbb{H}2)$] 
For each $i\in\{1,\dots, n\}$, there exists $M_i:I\to \R,$ which is Kurzweil-Stieltjes integrable with respect to $g_i$, such that
\[
\left|\int_u^v (f_{\vec{\gamma}})_i(t,\vec{\eta}(t))\,{\rm d}g_i(t)\right|\leq \int_u^v M_i(t)\,{\rm d}g_i(t)
\]
for every $\vec\gamma, \vec{\eta}\in[\vec{\alpha},\vec{\beta}]$ and $[u,v]\subseteq I$.
\item[$(\mathbb{H}3)$] 
For each $\vec\gamma, \vec\eta\in[\vec\alpha,\vec\beta]$, $i\in\{1,...,n\},$ and $t\in I$, the mapping
$$u\in[\alpha_i(t),\beta_i(t)]\mapsto  (f_{\vec\gamma})_i(t,\vec\eta(t)+(u-\eta_i(t))\vec{e_i}) $$
is continuous.
\item[$(\mathbb{H}4)$] 
For each $\vec\gamma,\vec\eta\in[\vec\alpha,\vec\beta]$, $i\in\{1,...,n\},$ and $t\in I$, the mapping
$$u\in[\alpha_i(t),\beta_i(t)]\mapsto u+(f_{\vec\gamma})_i(t,\vec\eta(t)+(u-\eta_i(t))\vec{e_i})\,\Delta^+g_i(t)$$
is nondecreasing.
\item[$(\mathbb{H}5)$]  
For each $t\in[t_0,t_0+L)$ and $\vec x\in\mathbb R^n,$ the mapping $\vec f(t,\vec x,\cdot)$ is nondecreasing on $[\vec\alpha,\vec\beta].$
\end{enumerate}
Then equation \eqref{FMDE} has extremal solutions in $[\vec\alpha,\vec\beta].$
\end{theorem}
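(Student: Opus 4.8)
The plan is to combine the fixed-point result in Proposition~\ref{fixedpoint} with the already-established Theorem~\ref{extrMDE} on extremal solutions for (non-functional) vectorial measure differential equations. The idea is that for each frozen functional argument $\vec\gamma\in[\vec\alpha,\vec\beta]$, the equation
\[
\vec y(t)=\vec y_0+\int_{t_0}^t\vec f_{\vec\gamma}(s,\vec y(s))\,\dd\vec g(s),\quad t\in I,
\]
is exactly of the form treated in Theorem~\ref{extrMDE}, so it remains only to check that $\vec\alpha$ and $\vec\beta$ are a well-ordered pair of lower/upper solutions for it and that $(\mathbb H1)$--$(\mathbb H4)$ imply the hypotheses $(\mathcal H1)$--$(\mathcal H4)$ of that theorem. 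For the lower-solution part: since $\vec\alpha$ is a lower solution of \eqref{FMDE} and, by $(\mathbb H5)$, $\vec f(s,\vec x,\vec\alpha)\le\vec f(s,\vec x,\vec\gamma)$ for $\vec\gamma\ge\vec\alpha$, one gets
$\alpha_i(v)-\alpha_i(u)\le\int_u^v f_i(s,\vec\alpha(s),\vec\alpha)\,\dd g_i(s)\le\int_u^v (f_{\vec\gamma})_i(s,\vec\alpha(s))\,\dd g_i(s)$,
so $\vec\alpha$ is a lower solution of the frozen problem; symmetrically $\vec\beta$ is an upper solution. Theorem~\ref{extrMDE} then produces, for each $\vec\gamma\in[\vec\alpha,\vec\beta]$, the greatest solution of the frozen problem in $[\vec\alpha,\vec\beta]$; define $T\vec\gamma$ to be this greatest solution.

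Next I would verify the hypotheses of Proposition~\ref{fixedpoint} for $T$. The bound \eqref{Tbound} is immediate: any solution $\vec\eta$ of the frozen problem satisfies $(T\vec\gamma)_i(v)-(T\vec\gamma)_i(u)=\int_u^v(f_{\vec\gamma})_i(s,\vec\eta(s))\,\dd g_i(s)$, whose absolute value is bounded by $\int_u^v M_i\,\dd g_i=h_i(v)-h_i(u)$ using $(\mathbb H2)$; here $h_i(t)=\int_{t_0}^t M_i(s)\,\dd g_i(s)$ is nondecreasing. The monotonicity of $T$ is the crucial point: if $\vec\gamma_1\le\vec\gamma_2$ in $[\vec\alpha,\vec\beta]$, I must show $T\vec\gamma_1\le T\vec\gamma_2$. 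By $(\mathbb H5)$, $\vec f_{\vec\gamma_1}(s,\vec x)\le\vec f_{\vec\gamma_2}(s,\vec x)$ pointwise, which implies that $T\vec\gamma_1$ — being a solution of the $\vec\gamma_1$-frozen problem — is a lower solution of the $\vec\gamma_2$-frozen problem (the jump inequalities \eqref{lower-jump1}, \eqref{lower-jump2} transfer because $\Delta^+g_i\ge0$ and the relevant map in $(\mathbb H4)$ is nondecreasing). Since $T\vec\gamma_2$ is the greatest solution of the $\vec\gamma_2$-frozen problem and equals the supremum of all lower solutions in $[\vec\alpha,\vec\beta]$ by formula \eqref{xmaxsys}, we get $T\vec\gamma_1\le T\vec\gamma_2$. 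Proposition~\ref{fixedpoint} then yields the greatest fixed point $\vec\gamma^*$ and least fixed point $\vec\gamma_*$ of $T$ in $[\vec\alpha,\vec\beta]$.

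It then remains to argue that the extremal fixed points of $T$ are precisely the extremal solutions of \eqref{FMDE} in $[\vec\alpha,\vec\beta]$. A fixed point $\vec\gamma=T\vec\gamma$ is a solution of the $\vec\gamma$-frozen problem, i.e. $\vec\gamma(t)=\vec y_0+\int_{t_0}^t\vec f(s,\vec\gamma(s),\vec\gamma)\,\dd\vec g(s)$, which is exactly a solution of \eqref{FMDE}. Conversely, any solution $\vec y$ of \eqref{FMDE} lying in $[\vec\alpha,\vec\beta]$ is a solution of the $\vec y$-frozen problem, hence $\vec y\le T\vec y$ (it is $\le$ the greatest such solution); by the variational characterization $\vec\gamma^*=\max\{\vec\gamma:\vec\gamma\le T\vec\gamma\}$ in Proposition~\ref{fixedpoint} we conclude $\vec y\le\vec\gamma^*$, so $\vec\gamma^*$ is the greatest solution of \eqref{FMDE} in $[\vec\alpha,\vec\beta]$. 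Dually, to obtain the least solution one repeats the construction with $T\vec\gamma$ defined as the \emph{least} solution of the $\vec\gamma$-frozen problem, which is still nondecreasing in $\vec\gamma$ and satisfies \eqref{Tbound}; its least fixed point, characterized as $\min\{\vec\gamma:T\vec\gamma\le\vec\gamma\}$, is then the least solution of \eqref{FMDE} in $[\vec\alpha,\vec\beta]$ by the same argument applied to upper bounds.

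The main obstacle I anticipate is the monotonicity of $T$, and more precisely the careful bookkeeping needed to show that $T\vec\gamma_1$ is a genuine lower solution of the $\vec\gamma_2$-frozen problem: one must check not only the integral inequality but also that the jump conditions \eqref{lower-jump1}--\eqref{lower-jump2} (implicit in Definition~\ref{lower}) are respected, which is where $(\mathbb H4)$ and the sign of $\Delta^+g_i$ enter. A secondary technical point is that $(\mathbb H5)$ is stated only for $t\in[t_0,t_0+L)$, so one should check that the missing endpoint $t=t_0+L$ causes no trouble — it does not, since the integral over $[u,v]$ is insensitive to the value of the integrand at a single right endpoint when $g_i$ is left-continuous, and the terminal jump $\Delta^+g_i(t_0+L)=0$.
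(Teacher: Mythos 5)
Your proposal is correct and follows essentially the same route as the paper: freeze the functional argument, use Theorem~\ref{extrMDE} to define $T\vec\gamma$ as the greatest (resp. least) solution of the frozen problem in $[\vec\alpha,\vec\beta]$, verify \eqref{Tbound} and the monotonicity of $T$ via $(\mathbb{H}5)$, and conclude with Proposition~\ref{fixedpoint} together with the observation that every solution of \eqref{FMDE} in $[\vec\alpha,\vec\beta]$ satisfies $\vec\gamma\le T\vec\gamma$. The only (harmless) variation is in the monotonicity step: you show that $T\vec\gamma_1$ is a \emph{lower} solution of the $\vec\gamma_2$-frozen problem and invoke the supremum formula \eqref{xmaxsys}, whereas the paper shows that $T\vec\gamma_2$ is an \emph{upper} solution of the $\vec\gamma_1$-frozen problem and applies Theorem~\ref{extrMDE} on $[\vec\alpha,T\vec\gamma_2]$; both arguments are valid, and your worry about re-checking the jump inequalities \eqref{lower-jump1}--\eqref{lower-jump2} is unnecessary since Definition~\ref{lower} only requires the interval-wise integral inequality.
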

\begin{proof}
Note that, for each $\vec\gamma\in[\vec\alpha,\vec\beta],$ assumption $(\mathbb{H}5)$ implies that $\vec\alpha$ and $\vec\beta$, respectively, are lower and upper solutions of the vectorial equation 
\begin{equation}\label{FMDE2}
\vec y(t)=\vec{y_0}+\int_{t_0}^t \vec{f}_{\vec{\gamma}}(s,\vec y(s))\, \dd \vec g(s).
\end{equation}

Consider the map $T:[\vec\alpha,\vec\beta]\rightarrow[\vec\alpha,\vec\beta]$ defined as follows: for each 
$\vec\gamma\in[\vec\alpha,\vec\beta],$ $T\vec\gamma$ is the greatest solution of \eqref{FMDE2} in 
$[\vec\alpha,\vec\beta]$.
The function $T$ is well-defined as hypotheses $(\mathbb{H}1)$--$(\mathbb{H}4)$ together with 
Theorem \ref{extrMDE} guarantee the existence of extremal solutions of (\ref{FMDE2}) in $[\vec\alpha,\vec\beta]$.
Moreover, $T$ clearly satisfies \eqref{Tbound} with $h_i(t)=\int_{t_0}^tM_i(s)\, \dd g_i(s),$ 
$i\in\{1,\dots,n\}$. 
In order to apply Proposition \ref{fixedpoint}, we need to show that $T$ is nondecreasing. Consider 
$\vec\gamma,\vec\eta\in[\vec\alpha,\vec\beta]$ such that $\vec\gamma\leq\vec\eta.$ By 
hypothesis $(\mathbb{H}5)$ we have $\vec f(s,T\vec\eta(s),\vec\gamma)\leq \vec f(s,T\vec\eta(s),\vec\eta)$ for 
$s\in I$. Thus, for $i\in\{1,\dots,n\}$ and $[u,v]\subseteq I$ we get
$$\int_{u}^v (f_{\vec\gamma})_i(s,T\vec\eta(s))\,\dd g_i(s)
	\leq\int_{u}^v (f_{\vec\eta})_i(s,T\vec\eta(s))\,\dd g_i(s)=(T\vec\eta)_i(v)-(T\vec\eta)_i(u),$$
that is, $T\vec\eta$ is an upper solution of
\begin{equation}\label{FMDE3}
\vec z(t)=\vec{y_0}+\int_{t_0}^t\vec f_{\vec\gamma}(s,\vec z(s))\, \dd \vec g(s).
\end{equation}
Theorem \ref{extrMDE} guarantees that \eqref{FMDE3} has the greatest solution between $\vec\alpha$ and 
$T\vec\eta.$ Since $T\vec\gamma$ is the greatest solution of \eqref{FMDE3} in $[\vec\alpha,\vec\beta]$ it follows that $T\vec\gamma\leq T\vec\eta.$ Hence, $T$ is nondecreasing and Proposition \ref{fixedpoint} yields that $T$ has the greatest fixed point $\vec \gamma^*$ with 
$$\vec{\gamma}^*=\max\{\vec\gamma\in[\vec\alpha,\vec\beta]: \vec\gamma\leq T\vec\gamma\}.$$
Naturally, $\vec\gamma^*$ is a solution of \eqref{FMDE} in $[\vec\alpha,\vec\beta]$. Moreover, it is not hard to see that if $\vec\gamma\in[\vec\alpha,\vec\beta]$ is any other solution of \eqref{FMDE}, then 
$\vec\gamma\leq T\vec\gamma$. Therefore, by the definition of $\vec{\gamma}^*$, we conclude that $\vec{\gamma}^*$ is the greatest solution of \eqref{FMDE}.

To prove the existence of the least solution for equation \eqref{FMDE} we proceed in a similar way but redefining the function $T$ so that $T\vec\gamma$ corresponds to the least solution of equation \eqref{FMDE2}.
\end{proof}

Using the theorem above we can establish the existence of extremal solutions for 
measure functional differential equations:
\begin{equation}\label{mFDE}
\begin{aligned}
\vec y(t) &= \vec y(t_0) + \int_{t_0}^{t} \vec F(s,\vec y_s)\,{\rm d}g(s), \quad t \in I,\\
\vec y_{t_0} &= \vec \phi,\\
\end{aligned}
\end{equation}
where $I=[t_0,t_0+L]$, $r>0$, $\vec\phi\in G([-r,0],\R^n)$, $\vec F:I\times G([-r,0],\R^n)\to\mathbb R^n$ and 
$g:I\to\mathbb R$ is nondecreasing and left-continuous. 

\begin{theorem}\label{extFDE}
Let $J=[t_0-r,t_0]\cup I$.  
Let $\vec{\alpha},\,\vec\beta\in G(J,\R^n)$ be such that 
$\vec\alpha_{t_0}\leq\vec\phi\leq\vec\beta_{t_0}$ and 
$$\vec{\alpha}(v)-\vec{\alpha}(u) \le \int_{u}^{v}\vec{F}(s,\vec{\alpha}_s) \, \dd g(s),
\quad[u,v]\subseteq I,$$
$$\int_{u}^{v}\vec{F}(s,\vec{\alpha}_s) \, \dd g(s)\leq \vec{\beta}(v)-\vec{\beta}(u),
\quad[u,v]\subseteq I.$$
Assume that $\vec\alpha\leq\vec\beta$ and consider the functional interval 
$$[\vec{\alpha},\vec{\beta}]_J=\{ \vec{\eta} \in G(J,\mathbb R^n) \, : \, \vec{\alpha} \le \vec{\eta} \le \vec{\beta}\}.$$
Further, assume that the following conditions hold:
\begin{enumerate}
\item[\textup{(a)}] 
 The integral $\int_{t_0}^{t_0+L} F_i(s,\vec y_s)\, \dd g(s)$ exists 
for every $\vec y\in[\vec\alpha,\vec\beta]_J$ and $i\in\{1,\dots, n\}$. 
\item[\textup{(b)}] For each $i\in\{1,\dots, n\}$, there exists $M_i:I\to \R,$ which is Kurzweil-Stieltjes integrable with respect to $g$, such that
\[
\left|\int_u^v F_i(s,\vec y_s)\,{\rm d}g(s)\right|\leq \int_u^v M_i(s)\,{\rm d}g(s)
\]
for every $\vec\gamma\in[\vec{\alpha},\vec{\beta}]_J$ and $[u,v]\subseteq I$.
\item[\textup{(c)}] For each $t\in I$, the mapping $\varphi\in P\mapsto\vec F(t,\varphi)$ is nondecreasing, 
where $P\subset G([-r,0],\R^n)$ is the set $P=\{\vec y_s\,:\, \vec y\in [\vec\alpha,\vec\beta]_J,\,s\in I\}.$
\end{enumerate}
Then equation \eqref{mFDE} has extremal solutions in $[\vec\alpha,\vec\beta]_J.$
\end{theorem}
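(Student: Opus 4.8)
The plan is to reduce \eqref{mFDE} to an equation of the form \eqref{FMDE} posed on $I$ and then invoke Theorem \ref{funcexist}. Put $\vec g=(g,\dots,g)$, $\vec y_0=\vec\phi(0)$, $\alpha=\vec\alpha|_I$ and $\beta=\vec\beta|_I$, and for $\vec\gamma\in G(I,\R^n)$ let $\bar\gamma\in G(J,\R^n)$ denote its $\vec\phi$--extension, defined by $\bar\gamma(t)=\vec\phi(t-t_0)$ for $t\in[t_0-r,t_0)$ and $\bar\gamma(t)=\vec\gamma(t)$ for $t\in I$ (which is regulated on $J$, with a possible jump at $t_0$). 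Define $\vec f:I\times\R^n\times G(I,\R^n)\to\R^n$ by $\vec f(t,\vec x,\vec\gamma)=\vec F(t,\bar\gamma_t)$; note it does not depend on $\vec x$. I would first record two elementary consequences of $\vec\alpha_{t_0}\le\vec\phi\le\vec\beta_{t_0}$: (i) if $\vec\gamma\in[\alpha,\beta]$ then $\bar\gamma\in[\vec\alpha,\vec\beta]_J$, hence $\bar\gamma_t\in P$ for all $t\in I$; and (ii) $\vec\alpha\le\bar\alpha$ and $\bar\beta\le\vec\beta$ on $J$, while more generally $\vec\gamma_1\le\vec\gamma_2$ on $I$ implies $\bar\gamma_1\le\bar\gamma_2$ on $J$, and therefore $(\bar\gamma_1)_t\le(\bar\gamma_2)_t$.

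Next I would show that $\vec z\mapsto\bar z$ is an order isomorphism from the solutions of \eqref{FMDE} (with the data above) lying in $[\alpha,\beta]$ onto the solutions of \eqref{mFDE} lying in $[\vec\alpha,\vec\beta]_J$. If $\vec z\in[\alpha,\beta]$ solves \eqref{FMDE}, then $\bar z(t_0)=\vec z(t_0)=\vec y_0=\vec\phi(0)$, so $\bar z_{t_0}=\vec\phi$; moreover $\bar z\in[\vec\alpha,\vec\beta]_J$ by (i), and the two integral equations agree term by term, so $\bar z$ solves \eqref{mFDE}. Conversely, a solution $\vec y$ of \eqref{mFDE} in $[\vec\alpha,\vec\beta]_J$ satisfies $\vec y_{t_0}=\vec\phi$, hence equals the $\vec\phi$--extension of $\vec y|_I$, and $\vec y|_I\in[\alpha,\beta]$ solves \eqref{FMDE}. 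Since this bijection and its inverse preserve the ordering, it carries the greatest (resp.\ least) solution of \eqref{FMDE} in $[\alpha,\beta]$ to the greatest (resp.\ least) solution of \eqref{mFDE} in $[\vec\alpha,\vec\beta]_J$, so it suffices to apply Theorem \ref{funcexist} to \eqref{FMDE}.

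Then I would verify the hypotheses of Theorem \ref{funcexist} for $\vec f$ with the well-ordered pair $\alpha\le\beta$. Since $\vec f_{\vec\gamma}(t,\vec x)=\vec F(t,\bar\gamma_t)$ is constant in $\vec x$, it is (trivially) quasimonotone nondecreasing, and $(\mathbb{H}3)$--$(\mathbb{H}4)$ hold trivially (constant maps are continuous, and $u\mapsto u+c\,\Delta^+g(t)$ is nondecreasing); $(\mathbb{H}1)$ and $(\mathbb{H}2)$ follow from assumptions (a) and (b) applied to $\bar\gamma\in[\vec\alpha,\vec\beta]_J$, recalling that $g_i=g$ for every $i$; and $(\mathbb{H}5)$ is precisely the monotonicity (c), through fact (ii). I would then check that $\alpha$ and $\beta$ are, respectively, a lower and an upper solution of \eqref{FMDE}: the inequalities at $t_0$ follow from $\vec\alpha_{t_0}\le\vec\phi\le\vec\beta_{t_0}$, while for $[u,v]\subseteq I$ fact (ii), the monotonicity (c), and the monotonicity of the Kurzweil--Stieltjes integral in the nondecreasing integrator $g$ give
\[
\alpha_i(v)-\alpha_i(u)\le\int_u^v F_i(s,\vec\alpha_s)\,\dd g(s)\le\int_u^v F_i(s,\bar\alpha_s)\,\dd g(s)=\int_u^v f_i\big(s,\alpha(s),\alpha\big)\,\dd g_i(s),
\]
together with the analogous reversed chain for $\beta$. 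I expect this last step to be the only genuine obstacle, since the hypotheses of the theorem control $\vec\alpha$ and $\vec\beta$ only through their \emph{own} histories $\vec\alpha_s,\vec\beta_s$, whereas the reduced equation involves the $\vec\phi$--histories $\bar\alpha_s,\bar\beta_s$; the gap is closed exactly by the ordering $\vec\alpha_{t_0}\le\vec\phi\le\vec\beta_{t_0}$ combined with (c). With all the hypotheses verified, Theorem \ref{funcexist} produces extremal solutions of \eqref{FMDE} in $[\alpha,\beta]$, and transporting them along the order isomorphism of the previous step yields the extremal solutions of \eqref{mFDE} in $[\vec\alpha,\vec\beta]_J$.
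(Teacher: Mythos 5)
Your proposal is correct and follows the paper's intended route: Theorem \ref{extFDE} is obtained precisely by viewing \eqref{mFDE} as \eqref{FMDE} with $\vec f(t,\vec x,\vec\gamma)=\vec F(t,\bar{\gamma}_t)$ ($\bar\gamma$ the $\vec\phi$-extension) and $\vec g=(g,\dots,g)$ and then invoking Theorem \ref{funcexist} (the remark after the statement confirms $(\mathbb{H}3)$--$(\mathbb{H}4)$ are vacuous), and your careful handling of the extension, of the passage from $\vec\alpha_s,\vec\beta_s$ to $\bar{\alpha}_s,\bar{\beta}_s$ via (c), and of the order isomorphism between solution sets fills in exactly what the paper leaves implicit. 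One caveat: your ``analogous reversed chain for $\beta$'' requires the upper-solution hypothesis with $\vec F(s,\vec\beta_s)$, whereas the second displayed inequality as printed has $\vec F(s,\vec\alpha_s)$ --- an evident misprint (with the literal hypothesis the conclusion fails, e.g. $n=1$, $g(t)=t$, $F(t,\varphi)=\varphi(0)$, $\vec\phi\equiv 1$, $\vec\alpha\equiv 0$, $\vec\beta\equiv 3/2$, whose unique solution $e^t$ leaves $[\vec\alpha,\vec\beta]_J$), so your reading is the intended one and your argument is sound for the corrected statement.
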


Note that assumptions $(\mathbb{H}3)$ and $(\mathbb{H}4)$ do not play a role in Theorem \ref{extFDE} 
as the function $\vec{f}(t,\vec{y}(t),\vec y)=\vec F(t,\vec y_t)$ does not depend on $\vec y(t)$.

\medskip

By setting $g(t)=t$, equation \eqref{mFDE} corresponds to the integral form of the 
retarded functional differential equation
\begin{equation}\label{FDE}
y'(t)=F(t,y_t),\quad t\in I,\qquad y_{t_0}=\phi.
\end{equation}
Regarding scalar equations \eqref{FDE}, the existence of solutions between well-ordered 
lower and upper solutions has been investigated in \cite{SZ}. Therein, a monotone interactive 
method is applied in order to obtain the extremal solutions. Although \cite{SZ} deals with 
lower/upper solutions which might be discontinuous, the function in the right-hand side, $F$, 
is assumed to satisfy the usual Car\'atheodory conditions. On one hand, 
in our Theorem \ref{extFDE} no continuity is required; however, the monotonicity condition 
(c) is admittedly stronger than the assumption (P5) stated at \cite[Theorem 4]{SZ}.

\section{Applications to Stieltjes differential equations}
Stieltjes differential equations are differential systems in which the usual notion of derivative is replaced by a 
differentiation process with respect to a given monotone function. The basic theory for such equations has been established in \cite{fp, pr}. In this work, we will consider vectorial Stieltjes differential equations of the form
\begin{equation}\label{Stielt}
\vec y\,'_{\vec g}(t)=\vec f(t,\vec y(t))\quad\mbox{for }\vec g\mbox{--a.a.} \, \, t\in I,\quad \vec y(t_0)=\vec y_0,
\end{equation}
where $I=[t_0,t_0+L]$, $\vec{y_0}\in \R^n$, $\vec{f}:I\times \R^n\to\mathbb R^n$ and $\vec g:I\to\mathbb R^n$ 
with $\vec g=(g_1,\dots,g_n)$ such that, for each $i\in\{1,\dots, n\}$, $g_i:I\to\R$ is nondecreasing and left-continuous. The problem described by  \eqref{Stielt} should be understood as the following system of Stieltjes differential equations:
\begin{equation}\label{Stieltsys}
(y_i)'_{g_i}(t)=f_i(t,\vec y(t))\quad\mbox{for } g_i\mbox{--a.a. } t\in I,
\quad y_i(t_0)=y_{0,i},\quad i\in\{1,\dots, n\}.
\end{equation}
For a thorough study of the Stieltjes derivative which appears in \eqref{Stieltsys} we refer to \cite{pr, fp}.

We remark that the equations studied in \cite{fp} are contained in \eqref{Stielt},   
corresponding to the particular choice $g_i=g:I\to\mathbb R$ for all $i\in\{1,\dots, n\}$. 
The Stieltjes equations in \cite{fp} were investigated in the space $\mathcal{AC}_{g}(I)$ of functions absolutely continuous with respect to $g$  nondecreasing and left-continuous. Recall that a function $y\in\mathcal{AC}_g(I)$ if for every $\varepsilon>0$ there exists $\delta>0$ such that
\[
\sum_{j=1}^m|y(b_j)-y(a_j)|<\varepsilon
\]
for any family $\{(a_j,b_j)\}$ of disjoint subintervals of $I$ satisfying 
$\sum_{j=1}^m(g(b_j)-g(a_j))<\delta$.
Extending the notion of solution found in \cite{fp}, we will look for solutions of the vectorial problem \eqref{Stielt} in the space
$$\mathcal{AC}_{\vec g}(I)=\mathcal{AC}_{g_1}(I)\times\dots\times\mathcal{AC}_{g_n}(I),$$ 
where $\vec g=(g_1,\dots,g_n):I\to\mathbb R^n$ is a nondecreasing left-continuous function. 

\begin{definition}\label{sol}
A solution of equation \eqref{Stielt} is a function $\vec y\in\mathcal{AC}_{\vec g}(I)$ such that \eqref{Stieltsys} holds.
\end{definition}

As a consequence of the Fundamental Theorem of Calculus for the Lebesgue--Stieltjes integral,  \cite{pr}, we have the following lemma.

\begin{lemma}
\label{lemanuevo}
Let $I=[t_0,t_0+L]$, $\vec{y_0}\in \R^n$, $\vec{f}:I\times \R^n\to\mathbb R^n$ and $\vec g:I\to\mathbb R^n$ 
with $\vec g=(g_1,\dots,g_n)$ such that, for each $i\in\{1,\dots, n\}$, $g_i:I\to\R$ is nondecreasing and left-continuous.

If $\vec y \in \mathcal{AC}_{\vec g}(I)$ is a solution of \eqref{Stielt}, then 
\begin{equation}
\label{intLS}
y_i (t)=y_{0,i}+\int_{[t_0,t)} f_i(s,\vec y(s)) \,\dd \mu_{g_i} \ \ \ \mbox{for all $t \in I$,} \quad i\in\{1,\dots,n\},
\end{equation}
where the integral stands for the Lebesgue-Stieltjes integral with respect to the Lebesgue-Stieltjes measure $\mu_{g_i}$ induced by $g_i$.

Conversely, if $\vec y=(y_1,\dots,y_n):I\to\mathbb R^n$ satisfies \eqref{intLS}, then $\vec y \in  \mathcal{AC}_{\vec g}(I)$ and it solves the vectorial Stieltjes differential equation \eqref{Stielt}.
\end{lemma}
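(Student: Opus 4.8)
The plan is to reduce the statement, component by component, to the scalar Fundamental Theorem of Calculus for the Lebesgue--Stieltjes integral proved in \cite{pr}. Since the space $\mathcal{AC}_{\vec g}(I)$, the system \eqref{Stieltsys} and equation \eqref{intLS} are all defined coordinatewise, it suffices to fix $i\in\{1,\dots,n\}$, write $g=g_i$ and $h(t)=f_i(t,\vec y(t))$, and prove that a scalar function $y_i:I\to\R$ belongs to $\mathcal{AC}_g(I)$ with $(y_i)'_g(t)=h(t)$ for $g$-a.a.\ $t\in I$ and $y_i(t_0)=y_{0,i}$ if and only if $y_i(t)=y_{0,i}+\int_{[t_0,t)}h\,\dd\mu_g$ for every $t\in I$. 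Throughout I will use that the notions of $g$-null and $\mu_g$-null set coincide, and that two functions coinciding outside a $\mu_g$-null set have identical Lebesgue--Stieltjes integrals over every subinterval of $I$.

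For the direct implication, assume $\vec y\in\mathcal{AC}_{\vec g}(I)$ solves \eqref{Stielt}; then $y_i\in\mathcal{AC}_g(I)$. By the Fundamental Theorem of Calculus in \cite{pr}, the Stieltjes derivative $(y_i)'_g$ exists $g$-a.e., is Lebesgue--Stieltjes integrable with respect to $\mu_g$, and $y_i(t)=y_i(t_0)+\int_{[t_0,t)}(y_i)'_g\,\dd\mu_g$ for all $t\in I$. Since $y_i(t_0)=y_{0,i}$ and $(y_i)'_g(s)=h(s)$ for $g$-a.a.\ $s$, replacing $(y_i)'_g$ by $h$ inside the integral gives \eqref{intLS} (and, in particular, shows that $h$ is $\mu_g$-integrable).

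For the converse, assume $y_i$ satisfies \eqref{intLS}. That the integral in \eqref{intLS} exists for every $t\in I$ says precisely that $h$ is $\mu_g$-integrable on $I$ --- the value of $h$ at the right endpoint $t_0+L$ being irrelevant, since $\mu_g(\{t_0+L\})=0$. Thus $y_i$ is an indefinite Lebesgue--Stieltjes integral, and the Fundamental Theorem of Calculus in \cite{pr}, used in the other direction, yields simultaneously that $y_i\in\mathcal{AC}_g(I)$ and that $(y_i)'_g(t)=h(t)=f_i(t,\vec y(t))$ for $g$-a.a.\ $t\in I$; evaluating \eqref{intLS} at $t=t_0$ gives $y_i(t_0)=y_{0,i}$. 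Carrying this out for every $i\in\{1,\dots,n\}$ shows that $\vec y\in\mathcal{AC}_{\vec g}(I)$ and that $\vec y$ solves \eqref{Stieltsys}, that is, \eqref{Stielt}.

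I do not expect any genuine obstacle: the whole argument is bookkeeping on top of the scalar theory of \cite{pr}, which already absorbs the delicate points (the behavior of the Stieltjes derivative at the discontinuity points of $g$, where $\mu_g(\{t\})=\Delta^+g(t)$ by left-continuity, and on the set where $g$ is locally constant, which is $\mu_g$-null). The only items deserving an explicit line are the identification of $g$-null and $\mu_g$-null sets, the invariance of the Lebesgue--Stieltjes integral under modifying the integrand on a $\mu_g$-null set (so that $(y_i)'_g$ may be replaced by $f_i(\cdot,\vec y(\cdot))$), and the harmless role of the endpoint $t_0+L$.
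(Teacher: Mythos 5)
Your proposal is correct and follows exactly the route the paper intends: the paper states the lemma ``as a consequence of the Fundamental Theorem of Calculus for the Lebesgue--Stieltjes integral'' of \cite{pr}, applied componentwise, which is precisely your reduction to the scalar case together with the (correct) bookkeeping about $g$-null versus $\mu_g$-null sets and the negligible endpoint $t_0+L$. No gaps; you simply spell out what the paper leaves implicit.
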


Using the lemma above and recalling the relation between Lebesgue-Stieltjes and Kurzweil-Stieltjes integrals, \cite{MST}, one can show that a solution of \eqref{Stielt} is also a solution of the vectorial measure differential equation \eqref{MDE}.

In \cite{asat}, it is shown that, under very general assumptions, the integral equation \eqref{MDE1} is equivalent to
\begin{equation}\label{DDME}
y_g'(t)= f(t, y(t))\quad m_g\mbox{--a.e.}, \, \,\quad y(t_0)=y_0,
\end{equation}
where $m_g$ stands for the Thomson's variational measure (see $\mathcal S_{0^-}\mu_g$ in \cite{Th}) induced by a function $g:I\to\R$. In the case when $g$ is nondecreasing, as proved in \cite{ene}, the variational measure $m_g$ corresponds to the Lebesgue-Stieltjes outer measure $\mu_g^*$. Therefore, if $E\subset I$ and $m_g(E)=0,$ then $\mu_g^*(E)=0$ and, consequently, $E$ is Lebesgue-Stieltjes measurable with $\mu_g(E)=0.$ Accordingly, a solution of \eqref{DDME} also satisfies equation
\[
y_g'(t)= f(t, y(t))\quad\mbox{for } g\mbox{--a.a.} \, \, t\in I,\quad y(t_0)=y_0,
\]
 where $y \in \mathcal{AC}_{g}(I)$ if and only if $f(\cdot,y(\cdot))$ is integrable on $I$ with respect to $g$ in the Lebesgue-Stieltjes sense.
Therefore, along similar lines of the results in \cite{asat}, we can draw a correspondence between the solutions of 
$$y_i(t) = y_{0,i} + \int_{t_0}^{t} f_i(s,\vec{y}(s))\,{\rm d} g_i(s), \ \ \ t \in I, \ \ \ i\in\{1,\dots,n\},$$
and the solutions of
$$(y_i)'_{g_i}(t)=f_i(t,\vec y(t))\quad\mbox{for } g_i\mbox{--a.a. } t\in I, \ \ \ y_i(t_0)=y_{0,i}, \quadÊ i \in \{1,\dots,n\}.$$

Having all this in mind, based on results of previous sections, we can establish the existence of extremal solutions for Stieltjes differential equations \eqref{Stielt}. Note that extremal solutions to \eqref{Stielt} are defined in the obvious way in regard to Definition \ref{sol}. We now introduce the concepts of lower and upper solutions for this problem.

\begin{definition}
A lower solution of \eqref{Stielt} is a function $\vec \alpha\in\mathcal{AC}_{\vec g}(I)$ such that 
$\vec \alpha(t_0)\leq \vec y_0$ and 
\begin{equation}\label{lS}
(\alpha_i)'_{g_i}(t)\leq f_i(t,\vec \alpha(t))\quad\mbox{for } g_i\mbox{--a.a. } t\in I, \ \ \ i\in\{1,\dots,n\}.
\end{equation}
Analogously, $\vec \beta\in\mathcal{AC}_{\vec g}(I)$ is an upper solution of \eqref{Stielt} if 
$\vec y_0\leq\vec \beta(t_0)$ and 
\[
(\beta_i)'_{g_i}(t)\geq f_i(t,\vec \beta(t))\quad\mbox{for } g_i\mbox{--a.a. } t\in I, \ \ \ i\in\{1,\dots,n\}.
\]
\end{definition}

\begin{remark}
Every lower solution of \eqref{Stielt} is also a lower solution of \eqref{MDE1}. Indeed, given a lower solution $\vec\alpha$ of \eqref{Stielt}, since $\alpha_i\in\mathcal{AC}_{g_i}(I)$ for each  $i\in\{1,\dots,n\}$, by \cite[Theorem 5.1]{fp}, for every $[u,v]\subseteq I$ we have
$$\alpha_i(v)=\alpha_i(u)+\int_{[u,v)}(\alpha_i)'_{g_i}(s)\, \dd \mu_{g_i},$$
where the integral stands for the Lebesgue-Stieltjes integral with respect to the Lebesgue-Stieltjes measure $\mu_{g_i}$ induced by $g_i$.
Therefore, \eqref{lS} implies
$$\alpha_i(v)-\alpha_i(u)
\leq \int_{[u,v)}f_i(s,\vec\alpha(s))\, \dd \mu_{g_i}\quad [u,v]\subseteq I,
\quad i\in\{1,\dots,n\}.$$
Recall that Lebesgue-Stieltjes integrability implies Kurzweil-Stieltjes integrability, \cite{MST}. This, together with the fact that 
$g_i$ is left-continuous, ensures that the Lebesgue-Stieltjes integral on right-hand side coincides with the Kurzweil-Stieltjes integral $\int_{u}^vf_i(s,\vec\alpha(s))\, \dd g_i(s)$, see \cite{MST}. Since functions in the space $\mathcal{AC}_{g_i}(I)$ have bounded variation (\cite[Proposition 5.2]{fp}), we conclude that $\vec\alpha$ is a lower solution of the integral equation \eqref{MDE1}.

Similar arguments show that every upper solution of \eqref{Stielt} is also an upper solution of \eqref{MDE1}.
\end{remark}

In \cite{PM}, extremal solutions for \eqref{Stielt} have been studied in the scalar case. In order to apply the results of previous sections to investigate the solutions of the vectorial problem \eqref{Stielt} we will need the following lemma which corresponds to a particular case of \cite[Lemma 2.22]{asat}.

\begin{lemma}\label{null-int}
Let $g:[a,b]\to \mathbb{R}$ be nondecreasing and left-continuous. If $f:[a,b]\to \mathbb{R}$ is null $g$--a.e.,  then $\int_a^t f(s)\,\dd g(s)=0$ for every $t\in[a,b]$.
\end{lemma}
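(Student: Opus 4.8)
The plan is to transfer the problem to the Lebesgue--Stieltjes integral and then back, relying on the two facts already used in the discussion above: that Lebesgue--Stieltjes integrability with respect to a left-continuous nondecreasing $g$ entails Kurzweil--Stieltjes integrability with respect to $g$, and that on a half-open interval $[a,t)$ the two integrals coincide. Throughout, $\mu_g$ denotes the Lebesgue--Stieltjes measure induced by $g$, and I would write $N=\{s\in[a,b]:f(s)\neq0\}$, so that the hypothesis ``$f$ null $g$--a.e.'' means precisely $\mu_g^*(N)=0$ (recall that for nondecreasing $g$ the Thomson variational measure agrees with the Lebesgue--Stieltjes outer measure).

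First I would check that $f$ is Lebesgue--Stieltjes integrable with vanishing integral. Since $\mu_g$ is a complete measure and $\mu_g^*(N)=0$, the set $N$ is $\mu_g$--measurable, hence so is $f$; and since $\mu_g(N)=0$ we get $\int_{[a,b]}|f|\,\dd\mu_g=0$, so $f$ is Lebesgue--Stieltjes integrable and $\int_{[a,t)}f\,\dd\mu_g=0$ for every $t\in[a,b]$.

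Second, I would record the elementary observation that a $\mu_g$--null set contains no discontinuity point of $g$: by left-continuity, $\mu_g(\{s\})=g(s+)-g(s-)=g(s+)-g(s)=\Delta^+g(s)$ for every $s$, so $\Delta^+g(s)>0$ forces $\{s\}\not\subseteq N$, i.e.\ $f(s)=0$; in particular $f(s)\,\Delta^+g(s)=0$ for all $s\in[a,b]$. Then I would apply the Lebesgue--Stieltjes $\Rightarrow$ Kurzweil--Stieltjes transfer theorem (\cite{MST}): because $f$ is Lebesgue--Stieltjes integrable and $g$ is left-continuous, $\int_a^t f\,\dd g$ exists and coincides with $\int_{[a,t)}f\,\dd\mu_g$ (any jump correction distinguishing the two integrals is built from values $f(s)\,\Delta^+g(s)$, which vanish). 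Hence $\int_a^t f\,\dd g=0$ for every $t\in[a,b]$.

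The step I expect to be the main obstacle is the last one, for two reasons: one must be sure the Kurzweil--Stieltjes integral actually \emph{exists} (the nontrivial half of the transfer theorem, not merely ``its value is $0$ if it exists''), and one must pin down the exact comparison between the two integrals to see that the discrepancy is killed by $f(s)\,\Delta^+g(s)=0$. A self-contained alternative that avoids this works straight from the definition of the Kurzweil--Stieltjes integral: write $N=\bigcup_{m\ge1}N_m$ with $N_m=\{s:m-1<|f(s)|\le m\}$, each $\mu_g$--null; cover $N_m$ by open intervals with $\sum_k\bigl(g(d_k)-g(c_k)\bigr)<\varepsilon/(m2^m)$ (take half-open intervals from the definition of the outer measure and enlarge them slightly using left-continuity of $g$); and define a gauge $\delta$ that is unrestricted at tags outside $N$ (where $f$ vanishes) and, at a tag $\tau\in N_m$, forces the associated subinterval into one of those covering intervals. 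For any $\delta$--fine tagged partition of $[a,t]$ the Riemann--Stieltjes sum is then bounded, by monotonicity of $g$ and the non-overlap of subintervals, by $\sum_{m\ge1}m\cdot\varepsilon/(m2^m)=\varepsilon$; letting $\varepsilon\to0$ yields both existence and the value $0$. Either route completes the proof.
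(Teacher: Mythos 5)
Your argument is correct, but note that the paper does not actually prove this lemma: it is stated with a pointer to Lemma 2.22 of \cite{asat}, of which it is a particular case, so you are supplying a proof where the authors only cite one. Your first route is essentially the reasoning that underlies that citation and is already implicit elsewhere in the paper: since $\mu_g$ is complete and the exceptional set is $\mu_g$-null, $f$ is $\mu_g$-measurable with $\int_{[a,t)}|f|\,\dd\mu_g=0$, and then the Lebesgue--Stieltjes $\Rightarrow$ Kurzweil--Stieltjes transfer for a left-continuous nondecreasing integrator (the fact from \cite{MST} invoked in Section 7, where for such $g$ one has $\int_a^t f\,\dd g=\int_{[a,t)}f\,\dd\mu_g$ with no correction term) gives existence and the value $0$; your observation that atoms of $\mu_g$ cannot lie in a null set, so $f(s)\Delta^+g(s)=0$ everywhere, correctly disposes of any endpoint discrepancy in whichever form of the transfer theorem one quotes. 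You rightly identify the existence half of that transfer as the substantive point, and your second, gauge-based argument settles it from scratch: decomposing $N=\bigcup_m N_m$ by the size of $|f|$, covering each $N_m$ by half-open intervals of total $g$-increment less than $\varepsilon/(m2^m)$ and enlarging them to open intervals via left-continuity of $g$, then forcing subintervals with tags in $N_m$ into those intervals, bounds every fine Riemann--Stieltjes sum by $\varepsilon$ (non-overlapping subintervals inside $(c_k,d_k)$ contribute at most $g(d_k)-g(c_k)$), which yields both existence and the value $0$. This second route is self-contained and arguably preferable if one wants the paper not to lean on the measure-theoretic transfer machinery; the only cosmetic gap is the left endpoint case $c_k=a$ when enlarging the cover, handled by extending $g$ constantly to the left of $a$.
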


The following result, obtained from Theorem \ref{extrMDE}, ensures the existence of solution for the vectorial problem \eqref{Stielt} in the presence of lower and upper solutions.  

\begin{theorem}
\label{teorema}
Suppose that \eqref{Stielt} has a lower solution $\vec\alpha$ and an upper solution $\vec\beta$ such that $\vec{\alpha}\leq\vec{\beta}$. 
Assume that $\vec{f}$ is quasimonotone nondecreasing in $E=\{(t,\vec{x})\,:\,\vec{\alpha}(t)\leq\vec{x}\leq\vec{\beta}(t)\}$ and that the following conditions hold:
\begin{enumerate}
\item[{\rm(A)}] For each $i\in\{1,\dots,n\}$ we have
\begin{enumerate}
\item[\textup{(i)}] for every $\vec \eta \in [\vec \alpha, \vec \beta]$ the function $f_i(\cdot,\vec \eta(\cdot))$ is $g_i$-measurable;
\item[\textup{(ii)}] for each $\vec\eta\in[\vec\alpha,\vec\beta]$ and for $g_i$-a.a. $t\in I$, the mapping
$$u\in[\alpha_i(t),\beta_i(t)]\mapsto  f_i(t,\vec\eta(t)+(u-\eta_i(t))\vec{e_i}) $$
is continuous;
\item[\textup{(iii)}] for every $r>0,$ there exists a function $h_{i,r}:I\to\R_+$, which is Lebesgue-Stieltjes integrable with respect to $g_i$, such that
$$|f_i(t,\vec x)|\leq h_{i,r}(t),\quad\mbox{for } g_i\mbox{--a.a. } t\in I,
	\quad\mbox{for every }\vec x\in\mathbb R^n,\,\|\vec x\|\leq r.$$
\end{enumerate}
\item[{\rm(B)}] For each $\vec{\eta}\in[\vec{\alpha}, \vec{\beta}]$, $i\in\{1,\dots,n\}$, and $t\in I$, the mapping
\[
u\in[\alpha_i(t),\beta_i(t)]\mapsto u+f_i(t,\vec{\eta}(t)+(u-\eta_i(t))\vec{e_i})\Delta^+g_i(t)
\]
is nondecreasing.
\end{enumerate}
Then equation \textup{(\ref{Stielt})} has extremal solutions in 
$[\vec{\alpha},\vec{\beta}]\cap \mathcal{AC}_{\vec g}(I) $. Moreover, for $t\in I$, the greatest solution 
$\vec{y}^*=(y^*_1,\dots,y^*_n)$ is given by
\begin{equation*}
\label{xmaxsys'}
y^*_i(t)=\sup \{\ell_i(t) \, : \, \mbox{$(\ell_1,\dots,\ell_n)$ lower solution of \textup{(\ref{Stielt})} in 
$[\vec{\alpha},\vec{\beta}]\cap \mathcal{AC}_{\vec g}(I)$} \},
\end{equation*}
and the least solution $\vec{y}_*=(y_{*,1},\dots,y_{*,n})$ is given by
\begin{equation*}
\label{xminsys'}
y_{*,i}(t)=\inf \{u_i(t) \, : \, \mbox{$(u_1,\dots,u_n)$ upper solution of \textup{(\ref{Stielt})} in 
$[\vec{\alpha},\vec{\beta}]\cap \mathcal{AC}_{\vec g}(I)$} \}.
\end{equation*} 
\end{theorem}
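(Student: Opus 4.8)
The strategy is to reduce Theorem \ref{teorema} to the already-established Theorem \ref{extrMDE} by recasting the Stieltjes problem \eqref{Stielt} as a vectorial measure differential equation of the form \eqref{MDE-v}. The technical heart of this reduction consists in two things: first, identifying the set in $[\vec\alpha,\vec\beta]$ over which the integrals behave well; and second, checking that the hypotheses (A) and (B), which are stated $g_i$-almost everywhere and in Lebesgue--Stieltjes language, translate into the hypotheses $(\mathcal H1)$--$(\mathcal H4)$ of Theorem \ref{extrMDE}, which are stated everywhere and in Kurzweil--Stieltjes language.

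First I would fix $\vec\eta\in[\vec\alpha,\vec\beta]$ and $i\in\{1,\dots,n\}$, and observe that since $\vec\alpha,\vec\beta\in\mathcal{AC}_{\vec g}(I)$ they are bounded (functions in $\mathcal{AC}_{g_i}(I)$ have bounded variation by \cite[Proposition 5.2]{fp}), so there is $r>0$ with $\|\vec\eta(t)\|\le r$ for all $t\in I$; hence by (A)(iii) the map $t\mapsto f_i(t,\vec\eta(t))$ is dominated by the Lebesgue--Stieltjes integrable function $h_{i,r}$ for $g_i$-a.a.\ $t$, and by (A)(i) it is $g_i$-measurable, so it is Lebesgue--Stieltjes integrable with respect to $g_i$. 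Invoking the relation between the Lebesgue--Stieltjes and Kurzweil--Stieltjes integrals and the left-continuity of $g_i$ (\cite{MST}), the Kurzweil--Stieltjes integral $\int_{t_0}^{t_0+L} f_i(t,\vec\eta(t))\,\dd g_i(t)$ exists and coincides with the Lebesgue--Stieltjes one; this gives $(\mathcal H1)$. Setting $M_i=h_{i,r_0}$ for a common bound $r_0$ valid on all of $[\vec\alpha,\vec\beta]$ (which exists since $\vec\alpha,\vec\beta$ are bounded, so $\|\vec\eta(t)\|$ is uniformly bounded over $\vec\eta\in[\vec\alpha,\vec\beta]$, $t\in I$), the estimate $|f_i(t,\vec\eta(t))|\le M_i(t)$ holds $g_i$-a.e., and integrating and again passing from Lebesgue--Stieltjes to Kurzweil--Stieltjes yields $(\mathcal H2)$. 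Hypothesis $(\mathcal H3)$ follows from (A)(ii) once one notes that the value of the Kurzweil--Stieltjes integral is unaffected by the $g_i$-null set where continuity may fail, and $(\mathcal H4)$ is literally hypothesis (B). The quasimonotonicity hypothesis transfers verbatim. Thus Theorem \ref{extrMDE} applies to \eqref{MDE-v} with this data, producing extremal solutions $\vec y^*,\vec y_*$ in $[\vec\alpha,\vec\beta]$ together with the sup/inf characterizations \eqref{xmaxsys}, \eqref{xminsys}.

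It then remains to show that these solutions of the integral equation \eqref{MDEint2} are in fact solutions of the Stieltjes problem \eqref{Stielt} and conversely. For the forward direction, given a solution $\vec y$ of \eqref{MDEint2}, each $y_i$ is an indefinite Kurzweil--Stieltjes integral of a $g_i$-integrable integrand, hence (using that this integrand equals $f_i(\cdot,\vec y(\cdot))$ which is Lebesgue--Stieltjes integrable, so the two integrals agree) $y_i\in\mathcal{AC}_{g_i}(I)$ and, by Lemma \ref{lemanuevo} (or the Fundamental Theorem of Calculus of \cite{pr,fp}), $(y_i)'_{g_i}=f_i(\cdot,\vec y(\cdot))$ $g_i$-a.e.; so $\vec y$ solves \eqref{Stielt}. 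The converse was recorded in the remark preceding the theorem: every (lower/upper) solution of \eqref{Stielt} is a (lower/upper) solution of \eqref{MDE1} component-wise, using \cite[Theorem 5.1]{fp} and the Lebesgue--Stieltjes/Kurzweil--Stieltjes comparison. Consequently $[\vec\alpha,\vec\beta]\cap\mathcal{AC}_{\vec g}(I)$ is exactly the set of solutions of \eqref{Stielt} lying in $[\vec\alpha,\vec\beta]$, the extremal solutions coincide, and the sup/inf formulas carry over after replacing ``lower/upper solution of \eqref{MDE-v}'' by ``lower/upper solution of \eqref{Stielt}''; here one uses Lemma \ref{null-int} to see that modifying an integrand on a $g_i$-null set does not change whether the integral inequalities of Definition \ref{lower} hold, so the two notions of lower/upper solution between $\vec\alpha$ and $\vec\beta$ agree.

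**Main obstacle.** The delicate point is the bookkeeping in passing back and forth between the Lebesgue--Stieltjes and Kurzweil--Stieltjes settings: one must ensure at every step that the integrand $f_i(\cdot,\vec\eta(\cdot))$ (or $f_i(\cdot,\vec y(\cdot))$ for the constructed solution) is genuinely Lebesgue--Stieltjes integrable — which requires the uniform bound from (A)(iii) applied with a radius valid on the whole order interval, together with $g_i$-measurability from (A)(i) — and that left-continuity of $g_i$ is what makes the Lebesgue--Stieltjes integral $\int_{[t_0,t)}$ equal to the Kurzweil--Stieltjes integral $\int_{t_0}^t$. A subtlety worth isolating is that hypotheses (A)(ii)–(iii) and (B) are phrased in terms of the ``frozen'' maps $u\mapsto f_i(t,\vec\eta(t)+(u-\eta_i(t))\vec e_i)$; one has to check that these are precisely what $(\mathcal H3)$, $(\mathcal H4)$ demand, and that the $g_i$-null exceptional set in (A)(ii) is harmless because, by Lemma \ref{null-int}, it contributes nothing to any Kurzweil--Stieltjes integral against $g_i$.
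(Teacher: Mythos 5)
Your overall strategy coincides with the paper's: reduce \eqref{Stielt} to the vectorial measure differential equation via the Lebesgue--Stieltjes/Kurzweil--Stieltjes correspondence and Lemma \ref{lemanuevo}, verify the hypotheses of Theorem \ref{extrMDE}, and transfer the extremal solutions back. Your verifications of $(\mathcal H1)$, $(\mathcal H2)$ (by dominating $f_i(\cdot,\vec\eta(\cdot))$ with $h_{i,r}$ in the Lebesgue--Stieltjes sense and passing to the Kurzweil--Stieltjes integral) and $(\mathcal H4)$ (which is literally (B)) are sound. The genuine gap is $(\mathcal H3)$: Theorem \ref{extrMDE} demands continuity of $u\mapsto f_i(t,\vec\eta(t)+(u-\eta_i(t))\vec e_i)$ at \emph{every} $t\in I$, whereas (A)(ii) gives it only for $g_i$-a.a.\ $t$. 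As written, you apply Theorem \ref{extrMDE} directly to $\vec f$, whose hypotheses are simply not verified, and your justification --- that the exceptional set ``contributes nothing to any Kurzweil--Stieltjes integral'' by Lemma \ref{null-int} --- does not close the gap: Lemma \ref{null-int} is a statement about integrals of $g$-a.e.\ null functions, while $(\mathcal H3)$ is a pointwise hypothesis that enters the proof of Theorem \ref{extrMDE} (through Theorem \ref{exn1} and the results of \cite{as}) at individual points $t$, not only through integrated quantities.

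The paper closes exactly this gap by a modification argument, which is the one idea missing from your proposal. For each $i$ one chooses a $g_i$-null set $N_i$ outside of which (A)(ii)--(iii) hold, defines $U_i(t,\vec x)=f_i(t,\vec x)$ for $t\in I\setminus N_i$ and $U_i(t,\vec x)=0$ for $t\in N_i$, and applies Theorem \ref{extrMDE} to the modified equation $\vec y(t)=\vec y_0+\int_{t_0}^t\vec U(s,\vec y(s))\,\dd\vec g(s)$; for $\vec U$ the continuity in $(\mathcal H3)$, the monotonicity in $(\mathcal H4)$ and the pointwise bound $|U_i(t,\vec x)|\le h_{i,r}(t)$ hold for \emph{all} $t$, so the hypotheses are met (and your Lebesgue--Stieltjes domination argument for $(\mathcal H2)$ runs verbatim for $\vec U$; the paper instead extends the estimate from constants to regulated functions as in \cite[Lemma 3.1]{as}). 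Lemma \ref{null-int} is then used where it actually belongs: to show that solutions (and lower/upper solutions) of the modified integral equation are solutions (lower/upper solutions) of the original one, after which Lemma \ref{lemanuevo} converts the extremal solutions of the integral problem into extremal solutions of \eqref{Stielt} in $[\vec\alpha,\vec\beta]\cap\mathcal{AC}_{\vec g}(I)$, together with the sup/inf characterizations. Inserting this modification step (or, alternatively, proving an a.e.\ version of Theorem \ref{extrMDE}, which you have not done) would make your argument complete.
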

\begin{proof}
For each $i\in\{1,\dots,n\}$, let $N_i\subset I$ be a $g_i$-null set such that both conditions (ii) and (iii) hold for $t\in I\setminus N_i$. Put $U_i:I\times\mathbb R^n\rightarrow\mathbb R$
\[
U_i(t,\vec x)=\left\{
\begin{array}{cl}
f_i(t, \vec x)&\mbox{if $t\in I\setminus N_i$, $\vec x\in\R^n$}\\
0&\mbox{otherwise.}
\end{array}
\right.
\]
Let $\vec U=(U_1,\dots,U_n):I\times\mathbb R^n\rightarrow\mathbb R^n$  and consider the modified problem
\begin{equation}\label{U-eq}
\vec y(t)=\vec y_0+\int_{t_0}^{t}{\vec U(s,\vec y(s)) \, \dd \vec g(s)} \quad t\in I.
\end{equation}
It follows from Lemma \ref{null-int} that a solution of \eqref{U-eq} is also a solution of \eqref{MDE}. Moreover, (A) guarantees that the integrals in (\ref{U-eq}) also exist as Lebesgue-Stieltjes integrals when $\vec y \in [\vec \alpha, \vec \beta]$. Hence, in view of Lemma \ref{lemanuevo}, the existence of extremal solutions for \eqref{U-eq} in $[\vec \alpha, \vec \beta]$ yields the existence of greatest and least solutions for \eqref{Stielt} in $[\vec \alpha, \vec \beta]$. Altogether, it is enough to show that the function 
$\vec U$ fulfills conditions $(\mathcal H1)-(\mathcal H4)$ of Theorem \ref{extrMDE}.  

Clearly, $\vec U$ satisfies $(\mathcal H3)$ and $(\mathcal H4)$. Note that (A) implies that for each $i\in\{1,\dots,n\}$ and $\vec\eta\in[\vec \alpha,\vec \beta]$, the Lebesgue-Stieltjes integral 
$\int_{[t_0,t_0+L)}U_i(s,\vec\eta(s))\, \dd \mu_{g_i}$ exists. Consequently, $(\mathcal H1)$ holds due to the relation between Lebesgue-Stieltjes and Kurzweil-Stieltjes integrals. 

To prove $(\mathcal H2)$, take  
$r=\max\{\|\vec\alpha\|_\infty\,,\,\|\vec\beta\|_\infty\}$ and let $h_{i,r}:I\to\R_+$, $i\in\{1,\dots,n\}$, be the corresponding function in (A)(iii). Since
$$|U_i(t,\vec x)|\leq h_{i,r}(t),\quad t\in I,
		\quad\vec x\in\mathbb R^n,\,\|\vec x\|\leq r,$$
and both functions are integrable with respect to $g_i$ in the sense of Kurzweil-Stieltjes, we get
\[
\left|\int_u^v U_i(t,\vec{x})\,{\rm d}g_i(t)\right|\leq \int_u^v h_{i,r}(t)\,{\rm d}g_i(t)
\]
for every $[u,v]\subseteq I$ and $\vec x\in\R^n$ with $\|\vec x\|\leq r.$ Proceeding as in the proof of Lemma 3.1 in \cite{as}, we can show that the inequality above still holds if we consider regulated functions 
$\vec x:I\to\R^n$ with $\|\vec{x}\|_\infty\leq r$. Thus, $(\mathcal H2)$ follows and this concludes the proof.
\end{proof}

Consider now the functional problem,
\begin{equation}\label{FStielt}
\vec y\,'_{\vec g}(t)=\vec f(t,\vec y(t),\vec y)\quad\mbox{for } \vec g\mbox{--a.a. }t\in I,\quad \vec y(t_0)=\vec y_0,
\end{equation}
where $\vec{y_0}\in \R^n$, $\vec{f}:I\times \R^n\times G(I,\R^n)\to\mathbb R^n$ and $\vec g:I\to\mathbb R^n$ is nondecreasing and left-continuous. Naturally, as before, \eqref{FStielt} denotes a system of Stieltjes differential equations subject to functional arguments; namely:
\begin{equation*}
(y_i)'_{g_i}(t)=f_i(t,\vec y(t),\vec y)\quad\mbox{for } g_i\mbox{--a.a. }t\in I,
\quad y_i(t_0)=\vec y_{0,i},\quad i\in\{1,\dots,n\}.
\end{equation*}
A solution of this functional equation is defined analogously to those of problem \eqref{Stielt}, and so are the  upper and lower solutions. 

Extremal solutions for \eqref{FStielt} were recently investigated in \cite{PM} in the scalar case. Next, applying Theorem \ref{funcexist}, we present a result for \eqref{FStielt} in its general formulation. Such a result relies on a correspondence between \eqref{FStielt} and \eqref{FMDE} which can be obtained from an extension of the argument used in \cite{asat}. 

\begin{theorem}\label{Teorema2}
Suppose that \eqref{FStielt} has a lower solution $\vec\alpha$ and an upper solution $\vec\beta$ such that $\vec{\alpha}\leq\vec{\beta}$ and let 
$E=\{(t,\vec{x})\in I\times\R^n\,:\,\vec{\alpha}(t)\leq\vec{x}\leq\vec{\beta}(t)\}$. 
For each $\vec\gamma\in[\vec\alpha,\vec\beta]$, denote by 
$\vec f_{\vec\gamma}:I\times\mathbb R^n\rightarrow\mathbb R^n$ the function defined as  
$\vec f_{\vec\gamma}(t,\vec x)=\vec f(t,\vec x,\vec\gamma)$. 
Assume that for each $\vec\gamma\in[\vec\alpha,\vec\beta]$ the function $\vec f_{\vec\gamma}$ satisfies the conditions in Theorem \ref{teorema}. If for $\vec g$--a.a. $t\in I$ and all $\vec x\in\mathbb R^n$, the mapping $\vec f(t,\vec x,\cdot)$ is nondecreasing on $[\vec{\alpha},\vec{\beta}]$,
then problem \eqref{FStielt} has extremal solutions in $[\vec\alpha,\vec\beta]\cap \mathcal{AC}_{\vec g}(I).$
\end{theorem}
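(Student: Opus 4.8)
The plan is to recast \eqref{FStielt} as a functional measure differential equation of the form \eqref{FMDE} and then invoke Theorem~\ref{funcexist}. For each $i\in\{1,\dots,n\}$ fix a $g_i$-null set $N_i\subset I$ outside of which conditions (A)(ii), (A)(iii) and the monotonicity of $\vec f$ in its last variable are valid, and define $\vec U=(U_1,\dots,U_n):I\times\R^n\times G(I,\R^n)\to\R^n$ by $U_i(t,\vec x,\vec\gamma)=f_i(t,\vec x,\vec\gamma)$ for $t\in I\setminus N_i$ and $U_i(t,\vec x,\vec\gamma)=0$ for $t\in N_i$. By Lemma~\ref{null-int}, replacing $\vec f$ by $\vec U$ does not alter the Kurzweil--Stieltjes integrals involved; hence, using in addition the left-continuity of the $g_i$ and the Lebesgue--Stieltjes/Kurzweil--Stieltjes correspondence (legitimate since condition~(A) makes the relevant integrands Lebesgue--Stieltjes integrable), a function $\vec y\in[\vec\alpha,\vec\beta]$ solves $\vec y(t)=\vec y_0+\int_{t_0}^{t}\vec U(s,\vec y(s),\vec y)\,\dd\vec g(s)$, $t\in I$, if and only if $y_i(t)=y_{0,i}+\int_{[t_0,t)}f_i(s,\vec y(s),\vec y)\,\dd\mu_{g_i}$ for all $t\in I$ and $i$; by Lemma~\ref{lemanuevo}, applied with $\vec f$ frozen at $\vec y$, this amounts to $\vec y\in[\vec\alpha,\vec\beta]\cap\mathcal{AC}_{\vec g}(I)$ being a solution of \eqref{FStielt}. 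Rewriting the hypotheses on $\vec\alpha,\vec\beta$ through the fundamental theorem of calculus for $\mathcal{AC}_{g_i}$ shows, similarly, that $\vec\alpha$ and $\vec\beta$ are a lower and an upper solution of the $\vec U$--equation.

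It then remains to verify that $\vec U$ meets the hypotheses of Theorem~\ref{funcexist}: for each $\vec\gamma\in[\vec\alpha,\vec\beta]$, $\vec U_{\vec\gamma}:=\vec U(\cdot,\cdot,\vec\gamma)$ is quasimonotone nondecreasing in $E$, and $(\mathbb{H}1)$--$(\mathbb{H}5)$ hold. Quasimonotonicity of $\vec U_{\vec\gamma}$ and conditions $(\mathbb{H}1),(\mathbb{H}3),(\mathbb{H}4)$ follow from the conditions of Theorem~\ref{teorema} assumed for $\vec f_{\vec\gamma}$, together with the fact that $\vec U$ vanishes on the exceptional sets — this last point being what lets the everywhere statements in $(\mathbb{H}3)$ and $(\mathbb{H}4)$ survive even though (A)(ii) is only an a.e.\ condition. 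Two points require genuine care. First, $(\mathbb{H}2)$ asks for a dominant $M_i$ independent of $\vec\gamma$; this is where the monotonicity in the last variable is used, for $\vec f(t,\vec x,\vec\alpha)\leq\vec f(t,\vec x,\vec\gamma)\leq\vec f(t,\vec x,\vec\beta)$ for $\vec g$--a.a.\ $t$, so that with $r=\max\{\|\vec\alpha\|_\infty,\|\vec\beta\|_\infty\}$ and $h^{\vec\alpha}_{i,r},h^{\vec\beta}_{i,r}$ the dominants furnished by (A)(iii) for $\vec f_{\vec\alpha}$ and $\vec f_{\vec\beta}$, the function $M_i:=h^{\vec\alpha}_{i,r}+h^{\vec\beta}_{i,r}$ works for every $\vec\gamma$ (the passage from this pointwise bound to the integral estimate for regulated arguments being as in \cite[Lemma~3.1]{as}). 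Second, $(\mathbb{H}5)$ must hold for \emph{every} $t\in[t_0,t_0+L)$, while the monotonicity of $\vec f$ in the last variable is assumed only $\vec g$--a.e.; this is precisely why the offending null set is absorbed into $N_i$, so that $U_i(t,\vec x,\cdot)$ is either $f_i(t,\vec x,\cdot)$ (where the monotonicity really holds) or identically zero.

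Granted all this, Theorem~\ref{funcexist} provides the greatest and the least solutions of the $\vec U$--equation in $[\vec\alpha,\vec\beta]$, which by the first paragraph are the extremal solutions of \eqref{FStielt} in $[\vec\alpha,\vec\beta]\cap\mathcal{AC}_{\vec g}(I)$. An equivalent, more self-contained route bypasses \eqref{FMDE}: define $T:[\vec\alpha,\vec\beta]\to[\vec\alpha,\vec\beta]$ by letting $T\vec\gamma$ be the greatest solution in $[\vec\alpha,\vec\beta]\cap\mathcal{AC}_{\vec g}(I)$ of $\vec z\,'_{\vec g}=\vec f_{\vec\gamma}(\cdot,\vec z)$, $\vec z(t_0)=\vec y_0$ — which exists by Theorem~\ref{teorema}, since the last-variable monotonicity makes $\vec\alpha,\vec\beta$ lower/upper solutions of the frozen problem — check that $T$ is nondecreasing because a lower solution of the $\vec\gamma$-frozen problem is a lower solution of the $\vec\eta$-frozen one when $\vec\gamma\leq\vec\eta$ (use the supremum formula of Theorem~\ref{teorema}), bound $|(T\vec\gamma)_i(v)-(T\vec\gamma)_i(u)|$ by $\int_u^v M_i\,\dd g_i$, and apply Proposition~\ref{fixedpoint}; its greatest and least fixed points are the sought extremal solutions. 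Either way, the single real obstacle is manufacturing the $\vec\gamma$-uniform dominant and reconciling the a.e.\ monotonicity hypothesis with the everywhere form of $(\mathbb{H}5)$, both handled by the order sandwich $\vec f_{\vec\alpha}\leq\vec f_{\vec\gamma}\leq\vec f_{\vec\beta}$ and the null-set surgery; the rest is the familiar passage between the $\mathcal{AC}_{\vec g}$, Stieltjes-derivative and Kurzweil--Stieltjes formulations via Lemmas~\ref{lemanuevo} and \ref{null-int}.
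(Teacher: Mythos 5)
Your proposal follows essentially the route the paper itself indicates: the paper gives no detailed proof of this theorem, only the remark that it follows from Theorem \ref{funcexist} through the correspondence between \eqref{FStielt} and \eqref{FMDE}, which is exactly the null-set modification and the Lebesgue--Stieltjes/Kurzweil--Stieltjes passage you carry out (mirroring the proof of Theorem \ref{teorema}), and your identification of the two genuinely delicate points --- a $\vec\gamma$-uniform dominant $M_i$, manufactured from the order sandwich $\vec f_{\vec\alpha}\le\vec f_{\vec\gamma}\le\vec f_{\vec\beta}$ valid $\vec g$-a.e., and the reconciliation of the $\vec g$-a.e.\ last-variable monotonicity with the everywhere form of $(\mathbb{H}5)$ --- is precisely what the paper leaves implicit. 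One caveat on your primary route: you ``fix a $g_i$-null set $N_i$ outside of which (A)(ii), (A)(iii) and the monotonicity are valid'', but (A)(ii)--(iii) are assumed separately for each $\vec\gamma\in[\vec\alpha,\vec\beta]$ (and, inside (A)(ii), for each $\vec\eta$), so their exceptional sets depend on $\vec\gamma$, and an uncountable union of $g_i$-null sets need not be $g_i$-null; hence the single modified function $\vec U$ is not guaranteed to satisfy $(\mathbb{H}3)$--$(\mathbb{H}4)$ at every $t\notin N_i$ as stated. This is the same quantifier elision the paper itself makes (with respect to $\vec\eta$) in the proof of Theorem \ref{teorema}, and your second route avoids it cleanly: freezing $\vec\gamma$, applying Theorem \ref{teorema} as a black box to the frozen problem (the per-$\vec\gamma$ null-set surgery then happens inside that theorem), and running Proposition \ref{fixedpoint} with $h_i(t)=\int_{t_0}^{t}M_i\,\dd g_i$ is in effect the proof of Theorem \ref{funcexist} rerun at the Stieltjes level, and is the cleanest formalization of the paper's one-line argument.
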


\section{A simple model for a bacteria population with variable carrying capacity}

Consider an open tank which contains an initial amount of water reaching a level of $L$ meters high and assume that the changes on the level of water are exclusively caused by evaporation as a result of the effect of the sun. According to this, during the day the level of water will change, whereas it will remain constant during night hours. Now consider a bacteria population whose resources depend directly on the volume of water. This means that the carrying capacity (that is, the number of bacteria that can be supported indefinitely in the tank) will be dependent on the level of water: the higher the level of water is, the bigger the carrying capacity will be. Finally, we will also assume that every morning, the tank is refilled until a certain level depending on the population of bacterias at that time. 

We want to design a mathematical model for $w(t),$ the water height at time $t>0$, and $p(t)$, the bacteria population at time $t>0$, under the previous assumptions. For the latter, we will consider a logistic model where the carrying capacity will be given by a nondecreasing function $N:\mathbb R\to\mathbb R$ depending on $w(t)$. Hence, the population $p$ at time $t$ is represented by the equation
$$p'(t)=r p(t) (N(w(t))-p(t)),$$
with $r>0$ being the reproduction rate of the population.

In order to find an expression that suits $w(t)$, we want to differentiate with respect to a function $g$ which is constant during night hours and such that it assigns greater measure to middays, when the effect of the sun is stronger. We also want $g$ to present jump discontinuities at the beginning of each day in order to introduce instantaneous changes in $w(t)$ due to refillings. Thus, if we identify day hours with the intervals $[2k,2k+1]$ for $k=0,1,2,\dots$ and nights with $[2k+1,2k+2]$, $k=0,1,2,\dots$, a possible choice is
$$g(t)=\max\{ k=0,1,2,...: 2k\leq t\}+\int_0^t \max\{\sin(\pi s),0\}\,\dd s,\quad t\in\mathbb R,$$
since it is constant on $[2k+1,2k+2]$, $k=0,1,2,\dots,$ presents jump discontinuities at times $t\in 2\,\mathbb N$, and has maximum slopes at $t=1/2+2k,$ $k=1,2,\dots$ (which represent middays).

\begin{figure}
	\centering
	\subfigure[Graph of $g(t)$.]{\includegraphics[width=7.5cm]{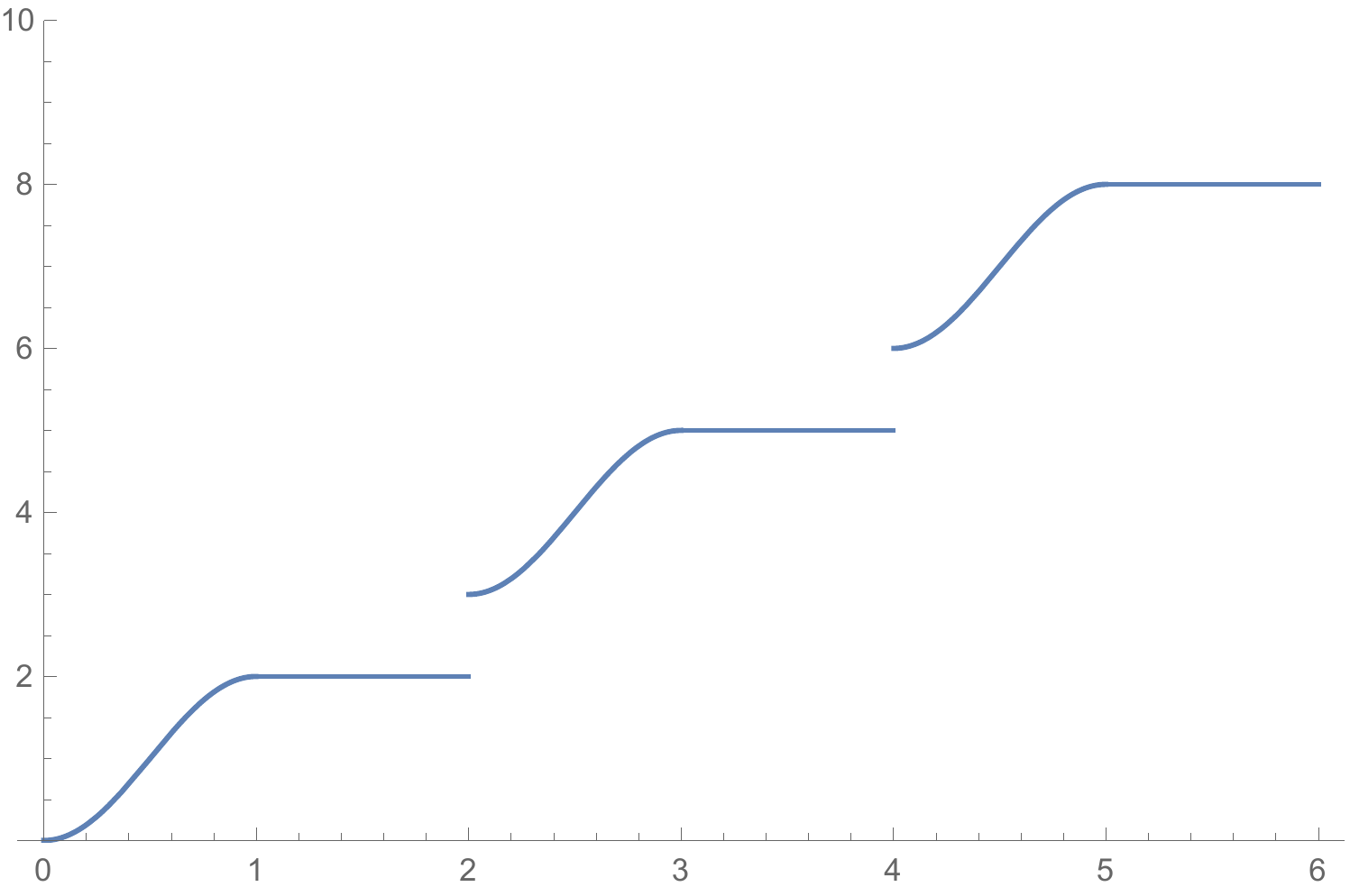}}\quad
	\subfigure[Graph of $W(t)$ for $L=10,$ $c=\pi$.]{\includegraphics[width=7.5cm]{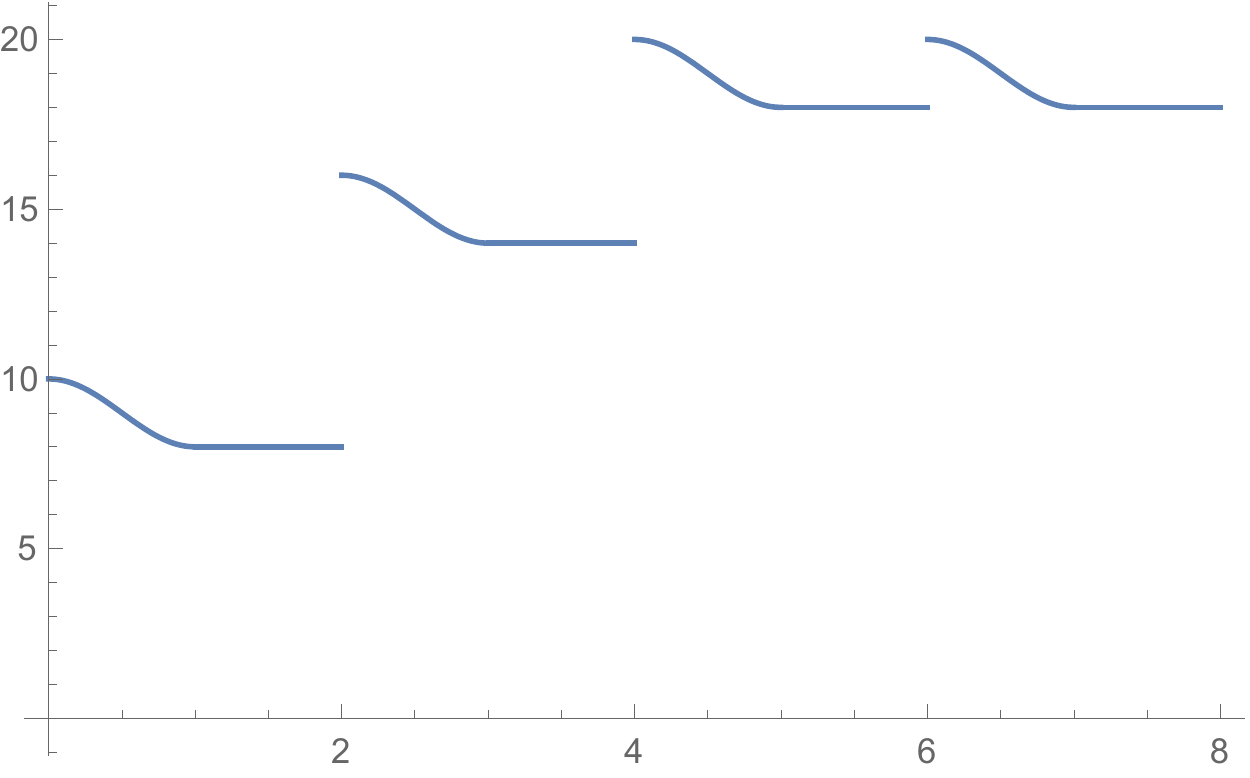}}
\end{figure}

Let $c>0$ be the evaporation rate of the water and let $a\in\R$ be a proportionality parameter readjusting the mistakes arising from counting the number of bacteria. Fixed an arbitrary time $T>0$, consider the described problem in the span interval $I=[0,T]$. A simple first model for $w(t)$ is given by 
\[
w'_g(t)=F(t,p(t),w(t)),
\]
where $F:I\times \mathbb R\times \mathbb R\to\mathbb R$ is defined by
\[
F(t,p,w)=\left\{
\begin{array}{ll}
\min\{\lfloor a\,p\rfloor w, 2L-w\},&\mbox{if $t\in 2\mathbb N$,}\\[2mm]
-c,&\mbox{otherwise,}\\
\end{array}
\right.
\]
and  $\lfloor x\rfloor$ stands for the greatest integer part of a number $x$. 
Note that the function $F$ defined in this way at times $t\in 2\mathbb N$ implies $w(2k)\leq w(2k+)\leq 2L,$ $k\in\mathbb N$.
Indeed,
$$w'_g(2k)=w(2k+)-w(2k)=\min\{\lfloor \alpha\, p(2k)\rfloor w(2k), 2L-w(2k)\}$$
so $w(2k+)=\min\{(1+\lfloor \alpha\, p(2k)\rfloor) w(2k), 2L\}.$

Therefore, we consider the following system of differential equations
\begin{equation}\label{bact1}
\left\{
\begin{array}{rcll}
p'(t)&=&rp(t)(N(w(t))-p(t)),& \quad p(0)=p_0,\\[2mm]
w'_g(t)&=&F(t,p(t),w(t)),&\quad w(0)=L,
\end{array}
\right.
\end{equation}
which can be regarded as a vectorial Stieltjes differential equation \eqref{Stielt} with unknown term $\vec y=(p,w)$, initial condition $\vec y_0=(p_0,L)$, and the functions 
$\vec g:I\to \R^2$, $\vec f:I\times\R^2\to\R^2$ are given by
\begin{equation}\label{vec-f}
\vec g(t)=(t,g(t)),\quad \vec{f}(t,(p,w))=(rp(N(w)-p)\,,\,F(t,p,w)).
\end{equation}

We will use Theorem \ref{teorema} to show that \eqref{bact1} has at least one solution. Clearly, $\vec\alpha(t)=(0,0)$ is a lower solution of \eqref{bact1}. On the other hand, if we consider the map $W:I\rightarrow\mathbb R$ given by
\[
W(t)=
\left\{\begin{array}{ll}
L-c\int_0^t \max\{\sin(\pi s),0\}\,\dd s, &\mbox{$t\in [0,2],$}\\[2mm]
\min\{(1+\lfloor a\, p(2k)\rfloor)w(2k)\,,2L\}-c\int_{2k}^t \max\{\sin(\pi s),0\}\,\dd s, &\mbox{$t\in (2k,2k+2],\,k\in\mathbb N$},\\
\end{array}
\right.
\]
then $\vec{\beta}(t)=(p_0\, \exp\{\int_0^t rN(W(s))\,\dd s\}\,,\,W(t))$ is a solution of 
\begin{equation}\label{bact2}
\left\{
\begin{array}{rcll}
p'(t)&=&rp(t)N(w(t)),& \quad p(0)=p_0,\\[2mm]
w'_g(t)&=&F(t,p(t),w(t)),&\quad w(0)=L,
\end{array}
\right.
\end{equation}
and therefore, an upper solution of \eqref{bact1}.

Clearly, the map $\vec f$ defined in \eqref{vec-f} 
is quasimonotone nondecreasing in $I\times\mathbb R^2,$ and in particular in $E=\{(t,\vec{x})\,:\,\vec{\alpha}(t)\leq\vec{x}\leq\vec{\beta}(t)\}.$ Moreover, it is easy to check that $\vec{f}$ satisfies hypotheses (A)--(B) of Theorem \ref{teorema}, therefore the problem \eqref{bact1} has the extremal solutions between $\vec \alpha$ and $\vec \beta.$

So far, we have only used the fact that $N(w)$ is a nondecreasing function of $w$, so no matter if it is continuous or not, our theory applies. However, in some cases we may
find it reasonable to allow the carrying capacity $N(w)$ to be piecewise constant because very small changes in the water level could have no influence on the carrying capacity. A simple example appears when we consider $N$ to be the floor function, $N(t)=\lfloor t\rfloor.$ As we mentioned before, $W(t)$ is a solution of
$$w'_g(t)=F(t,p(t),w(t)),\quad w(0)=L.$$
Hence, we obtain the following ODE
\begin{equation}\label{floor}
p'(t)=rp(t)(\lfloor W(t)\rfloor-p(t)),\quad p(0)=p_0.
\end{equation}
One can easily check that $\lfloor W(t)\rfloor$ has at most a countable number of discontinuities, which we will denote by $\{t_i\}_{i\in\mathbb N}.$ Put $t_0=0.$ Note that equation \eqref{floor} is a linear equation in each interval $(t_i,t_{i+1}]$, $i=0,1,2,\dots,$ which can be solved exactly. 
 Setting $p_i=p(t_i)$, $i\in\N$, then the solution of \eqref{floor} is
$$p(t)=\frac{e^{\lfloor W(t)\rfloor r t}}{e^{\lfloor W(t_i)\rfloor r t_i}\left(\frac{1}{p_i}-\frac{1}{\lfloor W(t_i)\rfloor}\right)+\frac{e^{\lfloor W(t)\rfloor r t}}{\lfloor W(t)\rfloor}},\quad t\in(t_i,t_{i+1}].$$

\begin{figure}
	\centering
	\subfigure[Solution of \eqref{floor} for $L=10,$ $c=\pi$, $a=1/7$, $r=1$ and $p_0=5$.]{\includegraphics[width=7.5cm]{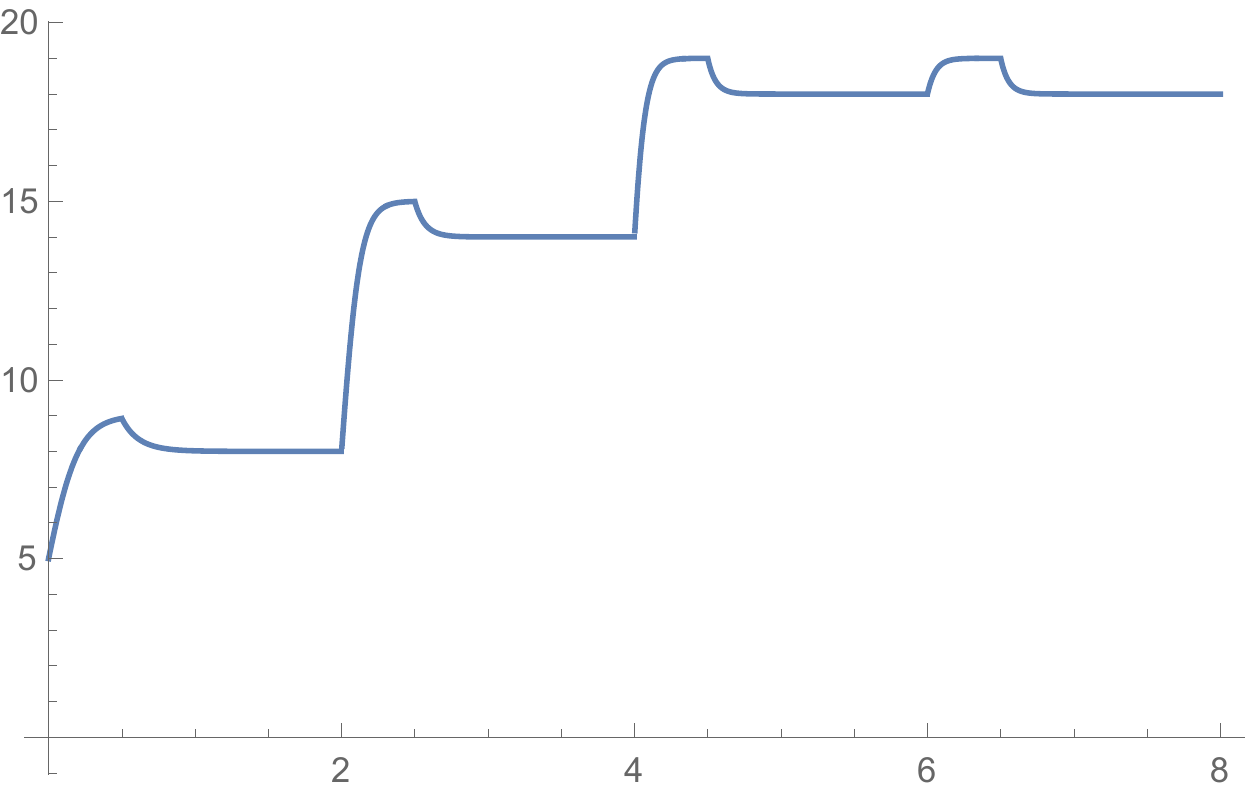}}\quad
	\subfigure[Solution of \eqref{floor} for $L=10,$ $c=\pi$, $a=1/7$, $r=1$ and $p_0=15$.]{\includegraphics[width=7.5cm]{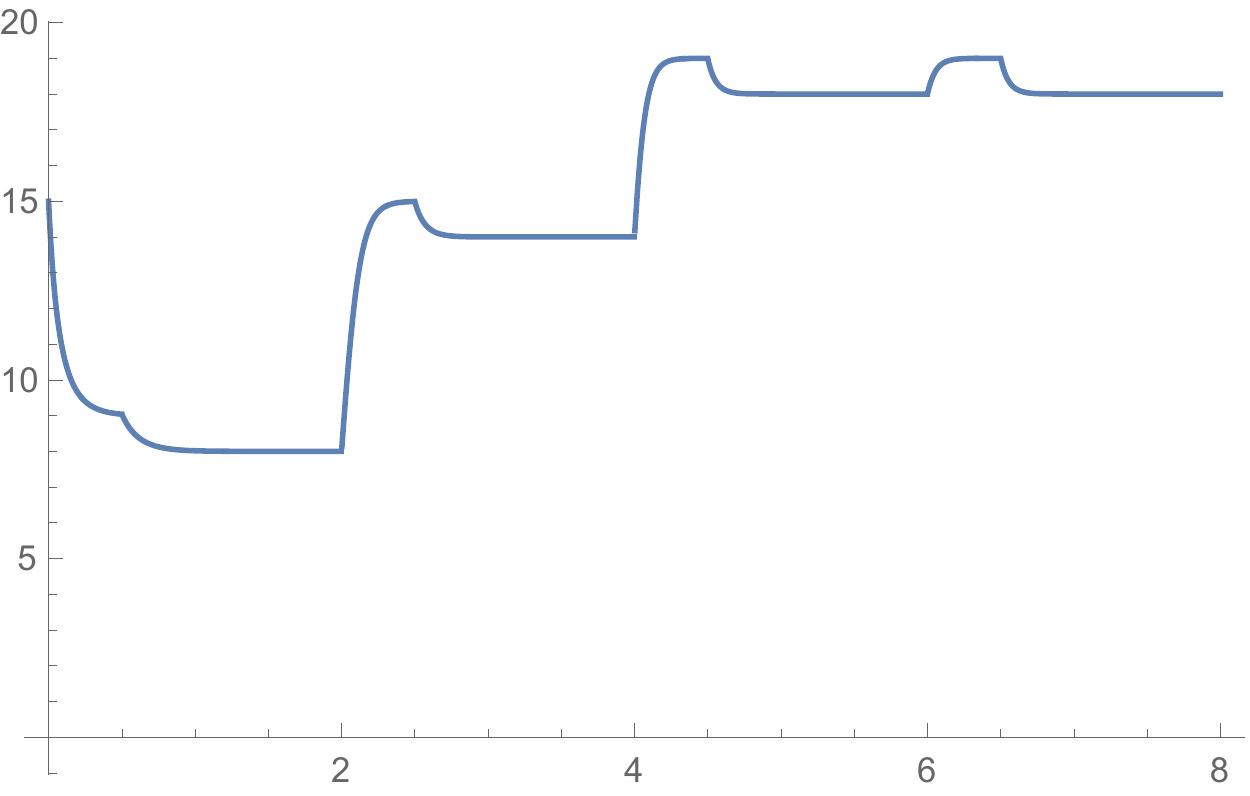}}
\end{figure}

Our theory also applies when we consider the following modified version of problem \eqref{bact1} with a functional argument:
\begin{equation}\label{bact3}
\left\{
\begin{array}{rcll}
p'(t)&=&rp(t)(N(w(t))-p(t)),&\quad p(0)=p_0,\\[2mm]
w'_g(t)&=&\tilde F(t,w(t),p),&\quad w(0)=L,
\end{array}
\right.
\end{equation}
where the map $\tilde F:I\times \mathbb R\times L^1(I)\to\mathbb R$ is given by 
\[
\tilde F(t,w,\varphi)=\left\{
\begin{array}{ll}
\min\{\lfloor a\, \int_{2(n-1)}^{2n} \varphi(s)\,\dd s\rfloor w\,, 2L-w\},&\mbox{if $t=2n$, $n \in \mathbb N$,}\\[2mm]
-c,&\mbox{otherwise.}
\end{array}
\right.
\]
In this case, $\vec{\alpha}=(0,0)$ is a lower solution of \eqref{bact3} and an upper solution can be obtained analogously  to the one from problem \eqref{bact1}. Hence, applying Theorem \ref{Teorema2} we can ensure the existence of solution for \eqref{bact3}.

\section*{Acknowledgement}
The project was financed by the SASPRO Programme. The research leading to these results has received funding from the People Programme (Marie Curie Actions) European Union's Seventh Framework Programme under REA grant agreement No. 609427. Research has been further co-funded by the Slovak Academy of Sciences.

\noindent The Institute of Mathematics of the Czech Academy of Sciences is supported by RVO:67985840.
\noindent
Rodrigo L\'opez Pouso was partially supported by
Ministerio de Econom\'{\i}a y Competitividad, Spain, and FEDER, Project
MTM2016-75140-P, and Xunta de Galicia REDES 2016 GI-1561 IEMath-Galicia and GRC2015/004

\noindent
Ignacio M\'arquez Alb\'es was partially
supported by Xunta de Galicia under grant ED481A-2017/095

\end{document}